\numberwithin{equation}{section}
\setlist[enumerate,1]{label={\rm(\arabic*)}, ref={\rm\arabic*}}
\setlist[enumerate,2]{label={\rm(\alph*)}, ref={\rm\alph*}}
\theoremstyle{plain}
\newtheorem{theorem}{Theorem}[section]
\newtheorem{proposition}[theorem]{Proposition}
\newtheorem{lemma}[theorem]{Lemma}
\theoremstyle{definition}
\newtheorem{definition}[theorem]{Definition}
\theoremstyle{remark}
\newtheorem{remark}[theorem]{Remark}
\DeclareMathOperator{\Log}{\mathcal{L}{\kern -2pt {\it og}}\,}
\newcommand{\CC}{\mathbb{C}}
\newcommand{\PP}{\mathbb{P}}
\renewcommand{\AA}{\mathbb{A}}
\newcommand{\QQ}{\mathbb{Q}}
\newcommand{\ZZ}{\mathbb{Z}}
\newcommand{\GG}{\mathbb{G}}
\newcommand{\Li}{\mathrm{Li}}
\newcommand{\dR}{\mathrm{dR}}
\newcommand{\B}{\mathrm{B}}
\DeclareMathOperator{\id}{id}
\DeclareMathOperator{\Hom}{Hom}
\DeclareMathOperator{\coker}{coker}
\DeclareMathOperator{\Gr}{Gr}
\DeclareMathOperator{\Ind}{Ind}
\DeclareMathOperator{\Res}{Res}
\DeclareMathOperator{\Ext}{Ext}
\DeclareMathOperator{\Lie}{Lie}
\DeclareMathOperator{\Spec}{Spec}
\DeclareMathOperator{\Sym}{Sym}
\newcommand{\abs}[1]{\lvert#1\rvert}
\newcommand{\DM}{\mathbf{DM}}
\newcommand{\DMT}{\mathbf{DMT}}
\newcommand{\MT}{\mathbf{MT}}
\newcommand{\sgn}{\mathrm{sgn}}
\newcommand{\mot}{\mathrm{mot}}
\newcommand{\an}{\mathrm{an}}
\let\leq\leqslant
\let\geq\geqslant
\renewcommand{\i}{\mathrm{i}}
\renewcommand{\H}{\mathrm{H}}
\newcommand{\M}{\mathrm{M}}
\renewcommand{\d}{\mathrm{d}}
\renewcommand{\t}{\mathrm{t}}
\renewcommand{\c}{\mathrm{c}}
\newcommand{\diagram}[1]{\SelectTips{lu}{10}\xymatrix{#1}}
\newcommand{\polH}{\mathcal{L}_n^{\H}}
\newcommand{\indpolH}{\mathcal{L}^{\H}}
\newcommand{\KummerH}{\mathcal{K}^{\H}}
\newcommand{\To}{\longrightarrow}
\renewcommand{\varnothing}{\emptyset}
\def\lowsim{\vbox to 0pt{\vss\hbox{$\scriptstyle\sim$}\vskip-1.5pt}}
\def\lowsimeq{\vbox to 0pt{\vss\hbox{$\scriptstyle\simeq$}\vskip-1.5pt}}
\DeclareRobustCommand\longhookrightarrow
\DeclareRobustCommand\longtwoheadrightarrow
\definecolor{Turquoise4}{RGB}{0,134,139}
\definecolor{VioletRed3}{RGB}{205,50,120}
\newcommand{\supth}[1]{\ensuremath{#1^{\mathrm{th}}}}
\newcommand{\lra}{\longrightarrow}
\title{A construction of the polylogarithm motive}
\author{Cl\'ement Dupont}
\address{Institut Montpelli\'erain Alexander Grothendieck, Universit\'{e} de Montpellier, CNRS, Montpellier, France}
\email{clement.dupont@umontpellier.fr}
\author{Javier Fres\'an}
\address{Sorbonne Universit\'e and Universit\'e Paris Cit\'e, CNRS, IMJ-PRG, 75005 Paris, France}
\email{javier.fresan@imj-prg.fr}
\begin{document}


\removebelow{6pt}

\maketitle

\begin{prelims}

\DisplayAbstractInEnglish

\bigskip

\DisplayKeyWords

\medskip

\DisplayMSCclass

\end{prelims}


\newpage

\setcounter{tocdepth}{1}

\tableofcontents


\section{Introduction}

\subsection{The polylogarithm variation of mixed Hodge--Tate structures}

Let $n \geq 1$ be an integer. The~$\supth{n}$ \textit{polylogarithm} $\Li_n$ is the function defined on the complex unit disk $\abs{z}<1$ by 
\[
\Li_n(z)=\sum_{k=1}^\infty \frac{z^k}{k^n}.
\]
That is, $\Li_1(z)=-\log(1-z)$ and $\Li_n(z)$ is, for $n\geq 2$, the primitive of $\Li_{n-1}(z)/z$ that vanishes at~\hbox{$z=0$}. Hence, the vector $\left(1, \Li_1(z), \dots, \Li_n(z)\right)$ is a solution of the system of linear differential equations $\d L=L\,\Omega_n$ on the punctured Riemann sphere $\PP^1(\CC) \setminus \{0, 1, \infty\}$, where $\Omega_n$ is the matrix
   \begin{equation}\label{}
\setlength{\arraycolsep}{4pt}\def\arraystretch{1}
			\Omega_n = \left(\begin{matrix}
			0 & \frac{\d z}{1-z} & & &     &&\\
			 & 0   & \frac{\d z}{z}& &  & 0 &  \\
			 &      & 0    & & \ddots &  & \\
			 &      &     & & \ddots &  & \\
			 &0&  & & &   0 & \frac{\d z}{z}   \\
			 && & && & 0  \\
			\end{matrix}\right).
   \end{equation} 
  A full basis of fundamental solutions is given by the rows of the matrix 
\begin{equation}\label{eqn:matrixpolylog}
\setlength{\arraycolsep}{4pt} \def\arraystretch{1.3}
 \Lambda_n(z)=\left(\begin{matrix} 
			1 & \Li_1(z) & \Li_2(z)        & \Li_3(z) & \cdots & \Li_n(z)\\
			 & 2\pi \i   & 2\pi\i\,\log(z)  & 2\pi \i \frac{\log^2(z)}{2}& \cdots & 2\pi \i\frac{\log^{n-1}(z)}{(n-1)!}\\[.5ex]
			 &            & (2\pi \i)^2      & (2\pi \i)^2 \log(z) & \cdots & (2\pi \i)^2 \frac{\log^{n-2}(z)}{(n-2)!} \\
			 &  &  &  & &  \\
			 & & & \ddots & & \vdots \\
			& & 0 & &  &  \\
			& & & & &  (2\pi \i)^n 
			\end{matrix}\right). 
\end{equation} 
The entries of $\Lambda_n(z)$ are multivalued functions on $\PP^1(\CC) \setminus \{0, 1, \infty\}$, and analytic continuation along a loop around one of the punctures left-multiplies $\Lambda_n(z)$ by a monodromy matrix. These monodromy matrices, first computed by Ramakrishnan~\cite{ramakrishnanmonodromy}, are upper triangular with $1$s along the diagonal and have rational entries thanks to the normalization by powers of $2\pi\i$. 

Deligne \cite{DeltoBloch} realized that the matrix $\Lambda_n(z)$ gives rise to a variation of mixed Hodge--Tate structures on \hbox{$\PP^1(\CC)\setminus\{0,1,\infty\}$}, the $\supth{n}$ \emph{polylogarithm variation} $\polH$, which is defined as follows:  
\begin{itemize}[leftmargin=10pt, topsep= 0pt, itemsep=2pt]
\item Its underlying holomorphic vector bundle is trivial of rank $n+1$ with basis $e_0,\ldots, e_n$, equipped with the flat connection $\d+\Omega_n$. Its weight and Hodge filtrations are such that $W_{2k}=W_{2k+1}$ is spanned by $e_0,\ldots, e_k$, and $F^k$ by $e_k,\ldots, e_n$, for all $k$.
\item Its underlying $\QQ$-local system consists of those holomorphic functions 
\[
\varphi\colon U \longrightarrow \CC e_0\oplus\cdots \oplus \CC e_n \qquad \left(U\subset \PP^1(\CC)\setminus \{0,1,\infty\}\right) 
\] such that $\Lambda_n(z)\varphi(z)$ has locally constant rational entries. The weight filtration is such that \hbox{$W_{2k}=W_{2k+1}$} consists of those $\varphi$ for which $\Lambda_n(z)\varphi(z)$ takes values in $\CC e_0\oplus\cdots\oplus \CC e_k$, which defines a sub-local system by the special shape of the monodromy matrices.
\end{itemize}
Concretely, the fiber of $\polH$ at $z\in \PP^1(\CC) \setminus \{0, 1, \infty\}$ can also be described as the $\QQ$-vector space of dimension $n+1$ with basis $e_0,\ldots,e_n$, with weight and Hodge filtrations such that \hbox{$W_{2k}=W_{2k+1}$} is spanned by $e_0,\ldots,e_k$, and $F^k$ is spanned by the $\supth{k}$ through $\supth{n}$ columns of $\Lambda_n(z)$, for all $k$. 

The block-triangular shape of \eqref{eqn:matrixpolylog} shows that $\polH$ contains the trivial variation with fiber~$\QQ(0)$ as a subobject, and that the quotient is a Tate twist (corresponding to the multiplicative factor~$2\pi\i$) of the $(n-1)$-symmetric power of the \emph{Kummer variation}
$\KummerH$, described by the same procedure as above starting from the matrix
\[
\left(\begin{matrix}
   1 & \log(z) \\
   0 & 2\pi \i 
\end{matrix}\right). 
\] 
It is also apparent from the shape of \eqref{eqn:matrixpolylog} that $\polH$ contains $\mathcal{L}_{n-1}^{\H}$ as a subobject, and hence we have an inductive system $\indpolH$ of variations of mixed Hodge--Tate structures. The symmetric powers $\Sym^n(\KummerH)$ also make up an inductive system induced by the inclusion of $\QQ(0)$ inside $\KummerH$, and we get a short exact sequence of ind-variations of mixed Hodge--Tate structures
\begin{equation}\label{eq: short exact sequence polylog hodge} 
0 \longrightarrow \QQ(0) \longrightarrow \indpolH \longrightarrow \Sym\left(\KummerH\right)(-1)\longrightarrow 0.
\end{equation}

\begin{remark}\label{rem: dual}
What usually appears in the literature, see \cite{Ramakrishnansurvey, Hainpolylog}, is the dual variation $(\polH)^\vee$, which is less natural from a cohomological viewpoint because it has non-positive weights. Its underlying holomorphic vector bundle is trivial of rank $n+1$, with basis $f_0,\ldots,f_n$, equipped with the flat connection $\d-{}^{\t}\Omega_n$, and with weight and Hodge filtrations such that $W_{-2k}=W_{-2k+1}$ is span\-ned by $f_k,\ldots,f_n$, and $F^{-k}$ by $f_0,\ldots,f_k$, for all $k$. Its underlying local system is the $\QQ$-span of the rows of $\Lambda_n(z)$, with weight filtration such that~$W_{-2k}=W_{-2k+1}$ is the $\QQ$-span of the $\supth{k}$ through $\supth{n}$ rows. Our descriptions of $\polH$ and $(\polH)^\vee$ are related by the fact that the rows of $\Lambda_n(z)$ express the coordinates of the dual basis $e_0^\vee,\ldots,e_n^\vee$ in the basis $f_0,\ldots,f_n$. 
\end{remark}

\subsection{The polylogarithm motive}

In an attempt to find a motivic interpretation of Zagier's conjecture, see \cite{zagierpolylogarithms}, expressing the special values of the Dedekind zeta function of a number field in terms of polylogarithms, Beilinson and Deligne \cite{beilinsondeligneinterpretation} postulated the existence of a lift of the polylogarithm variation of mixed Hodge--Tate structures to the then-conjectural abelian category of \emph{mixed Tate motives} with rational coefficients over $\PP^1_\QQ\setminus\{0,1,\infty\}$. The formalism of motivic polylogarithms was generalized to multiple polylogarithms by Goncharov, leading to progress on Zagier's conjecture; see~\cite{goncharovpolylogsarithmeticgeometry}. We refer the reader to the survey article \cite{dupontbourbaki} for more details on the motivic aspects of Zagier's conjecture.

We work over the base scheme $S=\PP^1_\ZZ\setminus \{0,1,\infty\}$. By the work of Voevodsky \cite{voevodskytriangulated}, we now have access to a triangulated category $\DM(S)$ of mixed motives with rational coefficients over~$S$. For this particular choice of base $S$, one can extract from $\DM(S)$ an abelian category $\MT(S)$ of \emph{mixed Tate motives} with rational coefficients over $S$ as in the case where the base is a number field, explained by Levine \cite{levinetatemotives}.
Inspired by the constructions of Wildeshaus and Huber\nobreakdash--Wildeshaus in the Hodge and the $\ell$-adic settings \cite{wildeshaus, HW}, Ayoub \cite{AyoubOberwolfach} defined a polylogarithm motive as an ind\nobreakdash-object of~$\MT(S)$. The idea is to compute the extension group
$$\Ext^1_{\Ind\left(\MT(S)\right)}\left(\Sym(\mathcal{K})(-1), \QQ_S(0) \right)$$
and define the polylogarithm motive as a specific extension class (see Appendix~\ref{sec: appendix ext} for more details on this computation and a precise comparison with that of \cite{AyoubOberwolfach}). Note that the references \cite{wildeshaus, HW, AyoubOberwolfach} place themselves in a \emph{dual} setting, consistently with Remark~\ref{rem: dual}.

In this paper, we give an explicit construction of the polylogarithm motive as the relative cohomology motive (see Definition~\ref{defi: appendix relative cohomology motive} for this notion) of a pair of varieties over $S$. Our starting point is the integral representation 
\begin{equation}\label{eq:intrep1}
\Li_n(z)=\int_{[0, 1]^n} \frac{z\, \d t_1\cdots \d t_n}{1-zt_1\cdots t_n}, 
\end{equation} valid for $z$ outside the half-line $[1, \infty)$, which suggests to work in the following geometric framework. Let~$z$ denote the coordinate on $S$, and let $X_n=\mathbb{A}^n_S$ be the affine $n$-space over $S$ with coordinates $t_1, \ldots, t_n$.  Consider the closed $S$-subschemes of $X_n$ defined by the equations
\begin{displaymath}
    A_n= \{1-zt_1\cdots t_n=0\} \quad\textnormal{and}\quad B_n = \{t_1(1-t_1)\cdots t_n(1-t_n)=0\},
\end{displaymath} so that the integrand of \eqref{eq:intrep1} defines an algebraic differential $n$-form on $X_n \setminus A_n$, the integration domain a singular $n$-chain in $X_n(\mathbb{C})$ with boundary in $B_n(\mathbb{C})$, and the polylogarithm $\Li_n(z)$ a period function of the family of relative cohomology groups
\begin{equation}\label{eqn:relativecohom}
\H^n(X_n \setminus A_n, B_n\setminus A_n \cap B_n).
\end{equation} Figure~\ref{fig1}  illustrates the case $n=2$.

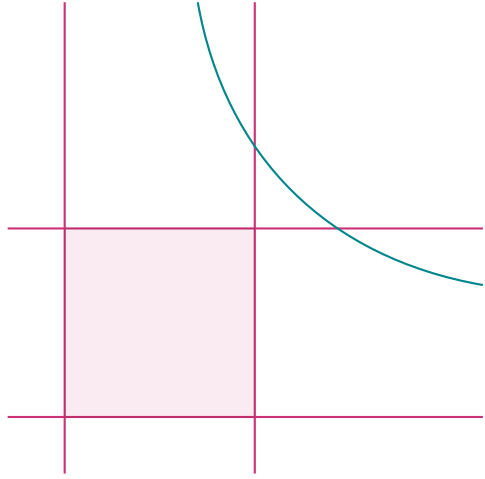
\begin{figure}
\begin{center}
	\begin{tikzpicture}
	\def\a{2.5};
	\draw[thick,VioletRed3] (0,-.3*\a) -- (0,2.2*\a);
	\draw[thick,VioletRed3] (\a,-.3*\a) -- (\a,2.2*\a);
	\draw[thick,VioletRed3] (-.3*\a,0) -- (2.2*\a,0);
	\draw[thick,VioletRed3] (-.3*\a,\a) -- (2.2*\a,\a);
	\draw[fill=VioletRed3, opacity=0.1] (0,0) -- (0,\a) -- (\a,\a) -- (\a,0) -- cycle;
	\path[draw, color=Turquoise4, thick] (.7*\a, 2.2*\a) to[out = -80, in = 170] (2.2*\a,.7*\a);
  \node[anchor=north] at (2*\a,.7*\a) {{\color{Turquoise4}{$A_2$}}};
\node[anchor=north] at (.15*\a,1.3*\a) {{\color{VioletRed3}{$B_2$}}};
	\end{tikzpicture}
	\end{center}	
 \caption{The hypersurfaces $A_2$ and $B_2$ in the affine plane $X_2$.}
  \label{fig1}
\end{figure}

\begin{definition}
The $\supth{n}$ \emph{polylogarithm motive} is the relative cohomology motive 
\[
\mathcal{L}_n=\M\left(X_n \setminus A_n , B_n \setminus A_n \cap B_n\right)[n].
\]
\end{definition}

\textit{A priori} an object of $\DM(S)$, the $\supth{n}$ polylogarithm motive is an iterated extension of the pure Tate motives $\QQ_S(-k)$ which turns out to lie in the abelian subcategory $\MT(S)$ of mixed Tate motives over~$S$, reflecting the fact that the cohomology groups of the pair $(X_n \setminus A_n, B_n\setminus A_n \cap B_n)$ are concentrated in degree~$n$. 
Besides, a partial boundary morphism (see Definition~\ref{defi: partial boundary morphism}) along the irreducible component $\{t_n=1\}$ of $B_n$ gives rise to a morphism of motives $\mathcal{L}_{n-1} \to \mathcal{L}_n$ that makes up an inductive system $\mathcal{L}$ in~$\MT(S)$, in other words, an object of the ind-category $\Ind\left(\MT(S)\right)$. 

Our main result is as follows (see Theorems~\ref{thm: short exact sequence L},~\ref{thm: short exact sequence L inductive},~\ref{thm: hodge realization polylog} below).

\begin{theorem} The ind-motive $\mathcal{L}$ fits into a short exact sequence 
    \begin{equation}\label{eq: short exact sequence polylog intro}
0\longrightarrow \QQ_S(0) \longrightarrow \mathcal{L} \longrightarrow \Sym(\mathcal{K})(-1) \longrightarrow 0
\end{equation} 
in the category $\Ind\left(\MT(S)\right)$. Its Hodge realization is the polylogarithm ind-variation $\indpolH$. 
\end{theorem}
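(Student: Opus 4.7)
The plan is to establish the three claims — that each $\mathcal{L}_n$ lies in $\MT(S)$, that the ind-motives fit into the short exact sequence \eqref{eq: short exact sequence polylog intro}, and that the Hodge realization recovers $\indpolH$ — by induction on $n$, using the partial boundary morphisms $\mathcal{L}_{n-1} \to \mathcal{L}_n$ along $\{t_n=1\}$ as the structural backbone. The key geometric input is that the intersections $\{t_i=0\} \cap A_n$ are empty, while $\{t_i=1\} \cap A_n$ identifies via the substitution $t_i = 1$ with a copy of $A_{n-1}$ in one fewer variable, so that $B_n \cap A_n$ is a recursive structure of the same combinatorial shape.

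To prove $\mathcal{L}_n \in \MT(S)$, I would stratify the pair $(X_n \setminus A_n, B_n \setminus A_n \cap B_n)$ by the faces of the coordinate arrangement and apply iterated localization triangles in $\DM(S)$. Once the $A_n$-contributions are resolved via the above recursion, one reduces to motives of complements of coordinate hyperplane arrangements in $\AA^n_S$, which are products of $\GG_{m,S}$'s and $\AA^1_S$'s and hence Tate; the same stratum-by-stratum analysis shows the relative cohomology is concentrated in degree $n$. To construct the short exact sequence, I would then interpret the decomposition $B_n = B_n' \cup \{t_n=1\}$ as a localization triangle of motives whose $\{t_n=1\}$-contribution is $\mathcal{L}_{n-1}$; passing to the colimit over $n$ gives the ind-object $\mathcal{L}$, and the injection $\QQ_S(0) \hookrightarrow \mathcal{L}$ comes from the one-dimensional $W_0$ piece of each $\mathcal{L}_n$, compatibly across transitions. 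To identify the cokernel, I would compute the weight-graded pieces inductively — they must be rank-one Tate motives $\QQ_S(-k)$ — and match the extension data with the symmetric-power structure on $\Sym(\mathcal{K})$, exploiting the Kummer short exact sequence $0 \to \QQ_S(0) \to \mathcal{K} \to \QQ_S(-1) \to 0$ and the functoriality of the partial boundary.

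For the Hodge realization, I would apply the Hodge realization functor — which is exact and commutes with filtered colimits — to \eqref{eq: short exact sequence polylog intro}, yielding an extension of $\Sym(\KummerH)(-1)$ by $\QQ(0)$ in ind-variations on $S$. By the $\Ext^1$ computation of Appendix~\ref{sec: appendix ext}, the space of such extensions is one-dimensional, so it suffices to match a single period. I would pair the Betti class of the cycle $[0,1]^n$ in the relative Betti cohomology defining $\mathcal{L}_n$ against the de~Rham class of the form $z\, \d t_1 \cdots \d t_n/(1 - z t_1 \cdots t_n)$: by the integral representation \eqref{eq:intrep1}, this period equals $\Li_n(z)$, which matches the corresponding entry of $\Lambda_n(z)$ in \eqref{eqn:matrixpolylog} and so identifies the Hodge realization of $\mathcal{L}$ with $\indpolH$ on the nose.

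The main obstacle is the identification of the cokernel $\mathcal{L}/\QQ_S(0)$ with $\Sym(\mathcal{K})(-1)$ inside $\Ind\left(\MT(S)\right)$ rather than merely up to abstract isomorphism of graded pieces. Pinning down the precise extension class requires a functorial description of the partial boundary morphisms showing that they are compatible with the inductive maps $\Sym^{k}(\mathcal{K}) \to \Sym^{k+1}(\mathcal{K})$ induced by the inclusion $\QQ_S(0) \hookrightarrow \mathcal{K}$. This compatibility is geometric in nature — reflecting the interaction between the boundary along $\{t_n=1\}$ and the symmetry permuting the coordinates $t_i$ — and I expect its verification to be the most delicate step of the argument.
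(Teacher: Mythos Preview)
Your proposal correctly identifies the main obstacle—the identification of the cokernel $\mathcal{L}/\QQ_S(0)$ with $\Sym(\mathcal{K})(-1)$ as ind-objects rather than merely as objects with the same weight-graded pieces—but does not offer a concrete method to overcome it, and your Hodge argument contains a factual error.

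For the short exact sequence, the paper does not use the boundary along $\{t_n=1\}$ as the primary structural device. Instead it applies the \emph{residue} morphism along the smooth divisor $A_n$ (Proposition~\ref{prop: appendix residue}), which immediately yields a distinguished triangle
\[
\M(X_n,B_n)[n] \longrightarrow \mathcal{L}_n \longrightarrow \M(A_n, A_n\cap B_n)(-1)[n-1] \stackrel{+1}{\longrightarrow},
\]
with the left-hand term equal to $\QQ_S(0)$ by K\"unneth. The quotient is thus the concrete relative cohomology motive $\mathcal{T}_{n-1}(-1)$, and all of Section~\ref{sec:1} is devoted to proving $\mathcal{T}\simeq\Sym(\mathcal{K})$ as inductive systems. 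This isomorphism is constructed as a composite $\Sym(\mathcal{K}) \simeq \mathcal{C}^{\sgn} \simeq \mathcal{D}^{\sgn} \simeq \mathcal{T}$ through auxiliary inductive systems; the middle step requires a motivic Postnikov system for configuration spaces (Appendix~\ref{sec: appendix postnikov}) together with the vanishing of the alternating part of the Arnol'd modules (Theorem~\ref{thm: appendix Arnold sign}). Your ``match the extension data'' is not a substitute for this: knowing the graded pieces, and even all the successive two-step extensions, does not determine an iterated extension in $\MT(S)$, and there is no obvious map in either direction between $\mathcal{L}_n/\QQ_S(0)$ and $\Sym^{n-1}(\mathcal{K})(-1)$ produced by your inductive scheme.

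For the Hodge realization, your claim that the relevant $\Ext^1$ is one-dimensional is wrong: Proposition~\ref{prop: appendix ext groups}, which you cite, gives $\Ext^1_{\Ind(\MT(S))}(\Sym(\mathcal{K})(-1),\QQ_S(0)) \simeq \QQ\oplus\QQ$, with the class $(1,0)$ already occupied by the extension~\eqref{eq: appendix short exact sequence Sym K} coming from $\Sym(\mathcal{K})$ itself. The same residue computation shows the Hodge $\Ext^1$ is at least two-dimensional, so matching a single period cannot pin down the class. The paper proceeds instead by direct computation: Propositions~\ref{prop: de rham basis} and~\ref{prop: de rham weight hodge} exhibit an explicit de~Rham basis $[\omega_n^{(0)}],\ldots,[\omega_n^{(n)}]$ (built from Eulerian polynomials) whose connection matrix is exactly $\Omega_n$ and whose weight and Hodge filtrations are the expected ones, and then Theorem~\ref{thm: hodge realization polylog} checks that the full first row of the period matrix against the class of $[0,1]^n$ reproduces $(1,\Li_1(z),\ldots,\Li_n(z))$.
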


It is easy to show that $\mathcal{L}_n$ fits into a short exact sequence
\begin{equation}\label{eq: short exact sequence a priori} 0\longrightarrow \QQ_S(0) \longrightarrow \mathcal{L}_n \longrightarrow \M(A_n,A_n\cap B_n)[n-1](-1) \longrightarrow 0,
\end{equation}
where $\M(A_n, A_n\cap B_n)$ is again a relative cohomology motive, and the crux of the proof (Theorem~\ref{thm: structureLog} below) consists in establishing an isomorphism 
\begin{equation}\label{eq: crucial isomorphism}
\M(A_n,A_n\cap B_n)[n-1] \simeq \Sym^{n-1}(\mathcal{K}).
\end{equation}
The main technical ingredient in the proof is a motivic lift of a spectral sequence originally due to Getzler \cite{getzlerMHM} in the Hodge setting, which computes motives of configuration spaces with coefficients and is a special case of a general construction of \cite{dupontjuteau}. As the referee pointed out to us, the isomorphism \eqref{eq: crucial isomorphism} was established using a different language and in a more abstract setting by Levine \cite[Proposition 9.3.3]{levinetubular} and Ayoub \cite[Theorem 3.6.44]{ayoubPhD2}.

The short exact sequence \eqref{eq: short exact sequence a priori} was already noticed by Deligne in a letter to Beilinson \cite{DeltoBeil}, where the isomorphism \eqref{eq: crucial isomorphism} is conjectured:
\begin{quote}
    \emph{$[\cdots]$ while $\H^{n-1}\left(\prod_{1}^n x_i=z, \mathrm{rel}\, x_i=1\right)$ is the $\Sym^{n-1}$ of the Kummer extension $[\cdots]$. At least I am convinced it is, but here also I would like to understand why.}
\end{quote} The letter was prompted by Ball and Rivoal's theorem \cite{rivoalcras, ballrivoal} according to which the Riemann zeta function takes irrational values at infinitely many odd integers. Their proof features integrals of the form
\begin{equation}\label{eq: ball rivoal integrals}
\int_{[0,1]^n}\frac{ t_1^{u_1}(1-t_1)^{v_1}\cdots t_n^{u_n}(1-t_n)^{v_n}}{(1-zt_1\cdots t_n)^r}\,\d t_1\cdots \d t_n
\end{equation}
for integer parameters $u_i,v_i,r$. By elementary manipulations, these integrals can be written as linear combinations with polynomial coefficients of $1$ and the polylogarithms $\Li_1(z), \ldots, \Li_n(z)$. As they are period functions of the family of relative cohomology groups \eqref{eqn:relativecohom}, the conceptual explanation is that these groups are incarnations of the polylogarithm motives. Ball and Rivoal were eventually interested in the evaluations of \eqref{eq: ball rivoal integrals} at $z=1$, for which a geometric interpretation was studied by the first-named author \cite{dupontoddzeta}. The present paper can therefore be thought of as a functional version of \emph{op.\,cit.}. 

An advantage of identifying the polylogarithm motive with an explicit relative cohomology motive is that one can then define it in the category of perverse Nori motives over $S$, see \cite{ivorra-morel}, where the computation of extension groups is currently out of reach.

\begin{remark}\label{rem: huber-kings}
Huber and Kings \cite{huberkings} produced, for every smooth group scheme $G$ over a base, a \emph{polylogarithm extension class}, which is \eqref{eq: short exact sequence polylog intro} in the case of $G=\mathbb{G}_m$ over~$\Spec(\ZZ)$ (see Appendix~\ref{sec: appendix ext} for a more precise discussion). It is unclear to us how to adapt our methods to produce geometric constructions for those extension classes beyond the case of $\mathbb{G}_m$, even in the case of the elliptic polylogarithm of Beilinson--Levin \cite{beilinsonlevin}, corresponding to elliptic curves. 
\end{remark}
 
\subsection{Iterated integrals and the motivic fundamental group}

Considering instead the more familiar representation of the polylogarithm as the iterated integral 
\begin{equation}\label{eqn:simplicialLin}
    \Li_n(z)=\int_{0 \leq x_1 \leq \cdots \leq x_n \leq 1} \frac{z\,\d x_1}{1-zx_1}\frac{\d x_2}{x_2}\cdots \frac{\d x_n}{x_n},  
\end{equation} which is related to \eqref{eq:intrep1} through the change of variables 
\begin{equation}\label{eqn:changeofvar}
    (x_1, \dots, x_n)=(t_1t_2\cdots t_n, t_2\cdots t_n, \dots, t_{n-1}t_n, t_n),
\end{equation}
one is led to work in a slightly different geometric framework. Namely, one considers the closed $S$-subschemes of $X_n'=\AA^n_S$ defined by 
\[
A_n'=\{(1-zx_1)x_2\cdots x_n=0\} \hspace{.3cm}\textnormal{ and }\hspace{.3cm} B'_n=\{x_1(x_2-x_1)(x_3-x_2)\cdots (x_n-x_{n-1})(1-x_n)=0\}. 
\]
However, the integral \eqref{eqn:simplicialLin} is not a period of the relative cohomology group 
\[
\H^n(X_n' \setminus A_n', B_n' \setminus A_n' \cap B'_n) 
\]
since the integration simplex $\Delta_n$ of \eqref{eqn:simplicialLin} meets the subvariety $A_n'$ on $\{x_1=x_2=0\}$. The trick to separate them is to resort to a tower of blow-ups: Let $\pi_n \colon \widetilde{X}'_n \to X_n'$ be the composition of 

\begin{itemize}[leftmargin=10pt, topsep= 0pt, itemsep=2pt, partopsep=3pt]
    \item the blow-up of $X'_n$ at the origin,
    \item the blow-up along the strict transform of the line $\{x_1=x_2=\cdots=x_{n-1}=0\}$,
    \item the blow-up along the strict transform of the plane $\{x_1=x_2=\cdots=x_{n-2}=0\}$,
    
    \item[$\vdots$]
    
    \item the blow-up along the strict transform of the codimension $2$ subspace $\{x_1=x_2=0\}$,
\end{itemize} and let $\widetilde{A}'_n$ and $\widetilde{B}'_n$ denote the strict transforms of $A'_n$ and~$B'_n$, respectively, and $E_n$ the exceptional divisor of~$\pi_n$. Then the boundary of the preimage by $\pi_n$ of the interior of $\Delta_n$ lies on $\widetilde{B}'_n\cup E_n$ and does not meet~$\widetilde{A}'_n$, as shown in Figure~\ref{fig2}  for $n=2$. 

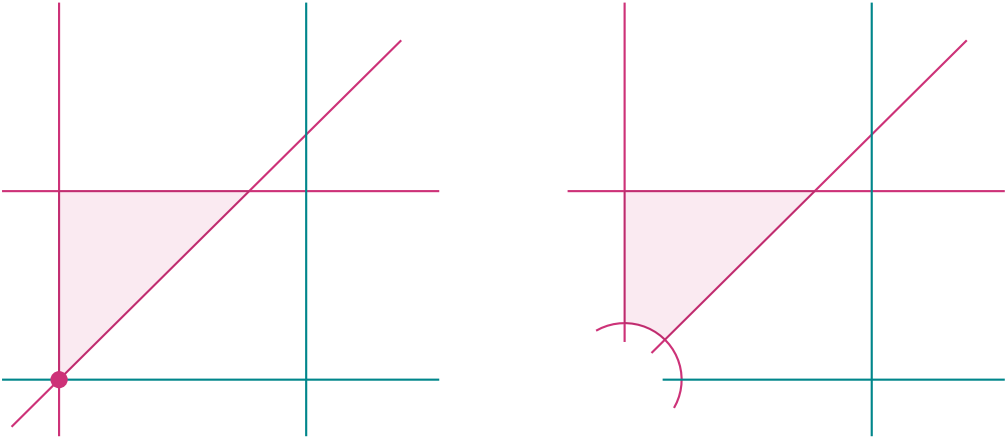
\begin{figure}[ht]
\centering 
\subfigure{\begin{tikzpicture}
	\def\a{2.5};
	\draw[thick,VioletRed3] (0,-.3*\a) -- (0,2*\a);
	\draw[thick,VioletRed3] (-.25*\a,-.25*\a) -- (1.8*\a,1.8*\a);
	\draw[thick,VioletRed3] (-.3*\a,\a) -- (2*\a,\a);
	\draw[fill=VioletRed3, opacity=0.1] (0,0) -- (\a,\a) -- (0,\a) -- cycle;
	\draw[thick,Turquoise4] (-.3*\a,0) -- (2*\a,0);
	\draw[thick,Turquoise4] (1.3*\a,-.3*\a) -- (1.3*\a,2*\a);
	\node[circle,inner sep=2pt,draw=VioletRed3,fill=VioletRed3, thick] (root) at (0,0) {};
   \node[anchor=north] at (1.5*\a, .3*\a) {{\color{Turquoise4}{$A_2'$}}};
   \node[anchor=north] at (.2*\a, 1.4*\a) {{\color{VioletRed3}{$B_2'$}}};
	\end{tikzpicture}}
 \hspace{1.5cm} 
 \subfigure{\begin{tikzpicture}
	\def\a{2.5};
	\draw[thick,VioletRed3] (0,.2*\a) -- (0,2*\a);
	\draw[thick,VioletRed3] (45:.2*\a) -- (1.8*\a,1.8*\a);
	\draw[thick,VioletRed3] (-.3*\a,\a) -- (2*\a,\a);
	\draw [VioletRed3,thick,domain=-30:120] plot ({.3*\a*cos(\x)}, {.3*\a*sin(\x)});
	\draw[fill=VioletRed3, opacity=0.1] (45:.3*\a) arc (45:90:.3*\a) -- (0,.3*\a) -- (0,\a) -- (\a,\a) -- cycle;
    (90:3mm) arc (90:0:3mm) arc (180:90:3mm) ;
	\draw[thick,Turquoise4] (.2*\a,0) -- (2*\a,0);
	\draw[thick,Turquoise4] (1.3*\a,-.3*\a) -- (1.3*\a,2*\a);
 \node[anchor=north] at (1.5*\a, .3*\a) {{\color{Turquoise4}{$\widetilde{A_2}'$}}};
   \node[anchor=north] at (.2*\a, 1.4*\a) {{\color{VioletRed3}{$\widetilde{B_2}'$}}};
   \node[anchor=north] at (.05*\a, .1*\a) {{\color{VioletRed3}{$E_2$}}};
	\end{tikzpicture}}
 \caption{In $\mathbb{A}^2$, the blow-up of the origin separates the boundary of the integration simplex $\{0\leq x_1\leq x_2\leq 1\}$ from the pole divisor $\{(1-zx_1)x_2=0\}$. By removing the strict transform of $\{x_2=0\}$ one recovers the geometry of Figure \ref{fig1}.}\label{fig2}
\end{figure}

Therefore, $\Li_n(z)$ is a period function of the family of relative cohomology groups 
\begin{equation}\label{eqn:simplicialmotive}
  \H^n\left(\widetilde{X}'_n \setminus \widetilde{A}'_n, \left(\widetilde{B}'_n \cup E_n\right) \setminus \widetilde{A}'_n\cap  \left(\widetilde{B}'_n\cup E_n\right)\right).
\end{equation}
Notice that the change of variables \eqref{eqn:changeofvar} provides a local chart for the blow-up $\widetilde{X}'_n$. More precisely, it induces an isomorphism  
\begin{align*}
  X_n \longrightarrow \widetilde{X}'_n \setminus \left(\widetilde{\{x_2=0\}} \cup \cdots \cup \widetilde{\{x_n=0\}}\right), \quad 
    (t_1, \dots, t_n) \longmapsto (x_1, \dots, x_n)
\end{align*} that identifies the pairs $(X_n \setminus A_n, B_n\setminus A_n \cap B_n)$ and $(\widetilde{X}'_n \setminus \widetilde{A}'_n, (\widetilde{B}'_n \cup E_n) \setminus \widetilde{A}'_n\cap  (\widetilde{B}'_n\cup E_n))$. Therefore, the cohomology groups \eqref{eqn:relativecohom} and \eqref{eqn:simplicialmotive} are isomorphic. Before this paper, the latter had only been computed for $n=2$ by Wang \cite{Wang}, who gives a slightly different presentation in the spirit of Goncharov--Manin \cite{goncharovmanin} and proves that it is a motivic lift of the dilogarithm variation. Our definition here can thus be seen as a way to circumvent the blow-up process. 

Another approach to constructing the polylogarithm motive is via a quotient of the motivic fundamental groupoid $\pi_1^{\mot}(\PP^1_\ZZ \setminus \{0, 1, \infty\}; \overline{0}, z)$ with a tangential basepoint at $0$ and a \emph{varying} usual basepoint at $z$, of which the iterated integrals \eqref{eqn:simplicialLin} are naturally periods. At the moment of writing, this motivic fundamental group has been constructed as a system of realizations by Deligne \cite{delignegroupefondamental} and as mixed Tate motive by Deligne--Goncharov \cite{delignegoncharov}, but only for a \emph{fixed algebraic} value of the parameter $z$. Moreover, the construction is not purely motivic, as it relies on the fact that the Hodge realization functor on mixed Tate motives over a number field is fully faithful and its image is stable under subquotients. 

\subsection{Overview} The paper is organized as follows. Section~\ref{sec:1} is devoted to the proof of the isomorphism \eqref{eq: crucial isomorphism}, which is achieved in Theorem~\ref{thm: structureLog}. From this, we derive the structure of the motive $\mathcal{L}_n$ at the beginning of Section~\ref{sec:2}. We then compute its Hodge realization and show that it agrees with the polylogarithm variation. The paper is supplemented by Appendix~\ref{sec: appendix relative cohomology motives}, in which we gather the main properties of relative cohomology motives, Appendix~\ref{sec: appendix postnikov}, which presents a computation of motives of configuration spaces that is used in the proof of Proposition~\ref{prop: iso C D alternating}, and Appendix~\ref{sec: appendix ext}, which gives details on certain extension groups and clarifies the relation with previous approaches to polylogarithm motives.
 
\subsection{Notation and conventions related to categories of motives} 

 Voevodsky's triangulated category of mixed motives with rational coefficients over a scheme $S$ (also known as \emph{motivic sheaves} on $S$) will be denoted by $\DM(S)$. It is a symmetric monoidal category with unit object~$\QQ_S(0)$. The assignment $S\leadsto \DM(S)$ supports a six-functor formalism; see \cite{ayoubPhD1,ayoubPhD2, cisinskideglise}. Every pair of varieties $(X, Z)$ over $S$ gives rise to an object $\M(X,Z)$ of $\DM(S)$ that we call a \emph{relative cohomology motive}. We warn the reader that in the literature such notation is often used for relative \emph{homology} motives instead. Appendix~\ref{sec: appendix relative cohomology motives} contains a compendium of results on these objects.
        
        We let $\GG_{m,S}$ denote the multiplicative group scheme over $S$ and $\{1\}\hookrightarrow \GG_{m,S}$ its unit section. We define the \emph{Lefschetz motive} as the relative cohomology motive 
        \begin{equation}\label{eqn:Lefschetz}
        \QQ_S(-1) = \M\left(\GG_{m,S},\{1\}\right)\,[1]. 
        \end{equation} 
        It is an invertible object of $\DM(S)$, and hence the tensor powers $\QQ_S(-n)=\QQ_S(-1)^{\otimes n}$ are defined for each~$n\in \ZZ$. Given an object $\mathcal{F}$ of $\DM(S)$, we set $\mathcal{F}(-n)=\mathcal{F}\otimes \QQ_S(-n)$.

        Let $\DMT(S)$ denote the triangulated subcategory of $\DM(S)$ generated by the objects $\QQ_S(-n)$ for all~$n\in \ZZ$. By a theorem of Levine \cite{levinetatemotives}, if $S$ satisfies the Beilinson--Soul\'{e} vanishing conjecture, then $\DMT(S)$ is equipped with a canonical $\t$-structure, whose heart $\MT(S)$ is called the category of \emph{mixed Tate motives} with rational coefficients over~$S$. It is an abelian category, endowed with cohomology functors induced by the~$\t$\nobreakdash-structure $\H^n \colon \DMT(S)\rightarrow \MT(S)$.  Every object $\mathcal{F}$ of~$\MT(S)$ is equipped with an increasing \emph{weight filtration} $W$ indexed by even integers such that, for each $n\in\ZZ$, the graded piece 
        \[
        \Gr_{2n}^W\mathcal{F} = W_{2n}\mathcal{F}/W_{2(n-1)}\mathcal{F}
        \] is a finite direct sum of copies of $\QQ_S(-n)$. We call \emph{semisimplification} of $\mathcal{F}$ the semisimple object 
        \[
        \Gr^W\mathcal{F}=\bigoplus_{n \in \ZZ} \Gr_{2n}^W\mathcal{F}
        \] of the abelian category~$\MT(S)$. Moreover, the symmetric monoidal structure on $\DM(S)$ induces the structure of a $\QQ$-linear neutral Tannakian category on~$\MT(S)$. 

        By the work of Borel, $\Spec(\QQ)$ satisfies the Beilinson--Soul\'{e} vanishing conjecture. Since the map $K_i(\ZZ)_\QQ\to K_i(\QQ)_\QQ$ is an injection for $i=1$ and an isomorphism for $i\neq 1$, so does $\Spec(\ZZ)$. Using the homotopy invariance $K_i(\mathbb{Z}[z])\simeq K_i(\mathbb{Z})$ and the  localization long exact sequence as in \hbox{\cite[Corollary 6.6.2]{esnaultlevinetatemotives}}, we see that $S=\PP^1_\ZZ\setminus\{0,1,\infty\}=\AA^1_\ZZ\setminus\{0,1\}$ also satisfies the Beilinson--Soul\'{e} vanishing conjecture, and therefore we can work in the Tannakian category~$\MT(S)$.

\subsection*{Acknowledgments} We would like to thank Tanguy Rivoal for bringing Deligne's letter to Beilinson \cite{DeltoBeil} to our attention, and Simon Pepin Lehalleur for his many comments on a first version of this paper. 

\section{A geometric description of the logarithmic system}\label{sec:1}

The main result of this section is Theorem~\ref{thm: structureLog}, which identifies the symmetric powers of the Kummer motive with explicit relative cohomology motives. Throughout, we work over the base scheme \hbox{$S=\PP^1_\ZZ\setminus \{0,1,\infty\}$} with coordinate~$z$, and we let $\GG_{m,S}$ denote the multiplicative group scheme over $S$ with coordinate $t$.

     \subsection{The Kummer motive and the logarithmic system}
    
        Let $Z \subset \GG_{m,S}$ denote the union of the closed \hbox{$S$\nobreakdash-subschemes} of $\GG_{m,S}$ defined by the equations $\{t=1\}$ and~$\{tz=1\}$.
        
        \begin{definition}
        The \emph{Kummer motive} is the relative cohomology motive
        $$\mathcal{K} = \M\left(\GG_{m,S},Z\right)\,[1] \;\;\in \DM(S).$$
        \end{definition}
        
        The Kummer motive is usually defined over the base $\PP^1_\ZZ\setminus \{0,\infty\}=\GG_{m,\ZZ}$, and $\mathcal{K}$ is its restriction to $S$. As the following classical proposition shows, it is an extension of $\QQ_S(-1)$ by $\QQ_S(0)$. The fiber at $z$ of its \'{e}tale realization is the Kummer torsor of roots of $z$, whence its name; see \cite[Sections~2.9 and~2.10]{delignegroupefondamental}.
        
        \begin{proposition}\label{prop: kummer}
        The Kummer motive $\mathcal{K}$ belongs to the subcategory $\MT(S)$ and fits into a short exact sequence
        \begin{equation}\label{eq: short exact sequence kummer}0 \longrightarrow \QQ_S(0) \longrightarrow \mathcal{K} \longrightarrow \QQ_S(-1) \longrightarrow 0.
        \end{equation}
        \end{proposition}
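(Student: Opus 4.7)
The plan is to apply the long exact sequence of the pair $(\GG_{m,S},Z)$ and identify the connecting map as a diagonal embedding.

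First, I would observe that on $S=\PP^1_\ZZ\setminus\{0,1,\infty\}$ the closed subschemes $\{t=1\}$ and $\{tz=1\}$ of $\GG_{m,S}$ are disjoint: they are the images of two sections $\sigma_1,\sigma_2\colon S\to \GG_{m,S}$ whose scheme-theoretic intersection is cut out by $z=1$, hence is empty on~$S$. Consequently $Z=\sigma_1(S)\sqcup \sigma_2(S)$ and $\M(Z)\simeq \QQ_S(0)\oplus \QQ_S(0)$.

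Next, I would apply the defining distinguished triangle for relative cohomology motives (Appendix~\ref{sec: appendix relative cohomology motives}),
\[
\M(\GG_{m,S},Z)\To \M(\GG_{m,S})\To \M(Z)\To [+1],
\]
after recalling the canonical splitting $\M(\GG_{m,S})\simeq \QQ_S(0)\oplus \QQ_S(-1)[-1]$. This splitting comes from the distinguished triangle $\M(\GG_{m,S},\{1\})\to \M(\GG_{m,S})\to \M(\{1\})$ defining the Lefschetz motive via~\eqref{eqn:Lefschetz}: the structural projection $\GG_{m,S}\to S$ is a retraction of the unit section, which splits the triangle. Taking cohomology with respect to the $\t$-structure of~$\DMT(S)$ yields
\[
0\To \H^0\M(\GG_{m,S},Z) \To \QQ_S(0)\xrightarrow{\;\delta\;} \QQ_S(0)^{\oplus 2} \To \H^1\M(\GG_{m,S},Z) \To \QQ_S(-1)\To 0,
\]
with all higher and lower $\H^n\M(\GG_{m,S},Z)$ vanishing because the outer objects of the triangle are concentrated in degrees $0$ and $1$.

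The last step would be to identify $\delta$ as the diagonal. Each of its two components is the pullback along a section $\sigma_i$, and the identity $\sigma_i^*\circ \pi^*=\id_{\QQ_S(0)}$ (for the structural projection $\pi$) shows that both components restrict to the identity on the summand $\QQ_S(0)\subset \M(\GG_{m,S})$. Thus $\delta$ is injective with cokernel $\QQ_S(0)$, which forces $\H^0\M(\GG_{m,S},Z)=0$ and produces the short exact sequence
\[
0\To \QQ_S(0) \To \H^1\M(\GG_{m,S},Z)\To \QQ_S(-1) \To 0.
\]
A shift by~$[1]$ yields the asserted sequence~\eqref{eq: short exact sequence kummer} for $\mathcal{K}$; since $\QQ_S(0)$ and $\QQ_S(-1)$ belong to the heart $\MT(S)\subset \DMT(S)$ and this heart is closed under extensions, the middle term $\mathcal{K}$ lies in $\MT(S)$ as well. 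No genuine obstacle arises: the only computation that needs care is the identification of $\delta$ with the diagonal, which is a functoriality statement for $\M(-)$ applied to $\sigma_1,\sigma_2$ and $\pi$.
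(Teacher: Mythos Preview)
Your argument is correct, but it takes a slightly different route from the paper. The paper applies the partial boundary triangle (Proposition~\ref{prop: appendix boundary triangle}) with the \emph{asymmetric} decomposition $Z'=\{t=1\}$, $Y=\{tz=1\}$, obtaining directly
\[
\M(Y)\longrightarrow \mathcal{K}\longrightarrow \M(\GG_{m,S},Z')[1]\stackrel{+1}{\longrightarrow},
\]
where the left term is $\QQ_S(0)$ (since $Y\to S$ is an isomorphism) and the right term is $\QQ_S(-1)$ \emph{by definition}~\eqref{eqn:Lefschetz}. No computation of a connecting map is needed. Your approach instead removes both points of $Z$ at once, which forces you to split $\M(\GG_{m,S})$ and then identify $\delta$ as the diagonal. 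This is perfectly valid and arguably more ``classical'' (it is the ordinary long exact sequence of a pair), but it trades the one-line invocation of the Lefschetz definition for a short functoriality check. The paper's route is also the one that generalizes cleanly to the inductive arguments for $\mathcal{T}_n$ and $\mathcal{L}_n$ later on, where one always peels off a single boundary component at a time.
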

        
        \begin{proof}
        Applied to the closed subschemes $Z'=\{t=1\}$ and $Y=\{tz=1\}$ of $X=\GG_{m,S}$, Proposition~\ref{prop: appendix boundary triangle} from the appendix yields the distinguished triangle 
        \begin{displaymath}
        \M(Y) \longrightarrow \mathcal{K} \longrightarrow \M(X, Z')\,[1]\stackrel{+1}{\longrightarrow}
        \end{displaymath} in the category $\DM(S)$. The rightmost term is $\QQ_S(-1)$ by definition \eqref{eqn:Lefschetz}, and the leftmost term is isomorphic to~$\QQ_S(0)$ since $p_Y\colon Y\rightarrow S$ is an isomorphism. As these two objects belong to the subcategory $\MT(S)$, the result follows.  
        \end{proof}
            
        Let $n \geq 0$ be an integer. We will be interested in the $\supth{n}$ symmetric power of the Kummer motive. By Proposition~\ref{prop: kummer}, its semisimplification is equal to
        $$\Gr^W\Sym^n(\mathcal{K}) \simeq \Sym^n(\Gr^W\mathcal{K}) \simeq \Sym^n(\QQ_S(0)\oplus\QQ_S(-1))\simeq  \QQ_S(0)\oplus \QQ_S(-1)\oplus\cdots\oplus \QQ_S(-n).$$
        The inclusion $\iota\colon\QQ_S(0)\hookrightarrow \mathcal{K}$ appearing in the short exact sequence \eqref{eq: short exact sequence kummer} induces, for each $n \geq 1$, transition morphisms $\iota_n\colon\Sym^{n-1}(\mathcal{K})\hookrightarrow \Sym^n(\mathcal{K})$ given by the formula
        \begin{equation}\label{eq: transition morphism Sym K}
        \iota_n=\frac{1}{n}\sum_{i=1}^{n} \id^{\otimes (i-1)}\otimes \iota \otimes \id^{\otimes (n-i)},
        \end{equation}
        which make up an inductive system $\Sym(\mathcal{K})$ in the category $\MT(S)$. 
        
        \begin{definition}
        The inductive system $\Sym(\mathcal{K})$ is called the \emph{logarithmic system}.
        \end{definition}
        
    \subsection{The inductive system $\mathcal{T}$}\label{subsec: definition inductive system T}
    
        For each integer $n \geq 0$, let $\AA^{n+1}_S$ denote the $(n+1)$-dimensional affine space over $S$ with coordinates $(t_1,\ldots,t_{n+1})$, and let $T_n\subset \AA^{n+1}_S$ be the closed $S$-subscheme defined by the equation $\{z t_1\cdots t_{n+1}=1\}$. Let~$Z_n\subset T_n$ be the union of the closed $S$-subschemes \hbox{$Z_n^i=\{t_i=1\}$} for $i=1,\ldots, n+1$. Under the identification $T_n\simeq \GG_{m,S}^n$, with coordinates $(t_1,\ldots,t_n)$, the subscheme $Z_n^i$ corresponds to the subtorus given by $\{t_i=1\}$ for~$i=1,\ldots,n$ and $\{zt_1\cdots t_n=1\}$ for $i=n+1$. We define an object
        \[
        \mathcal{T}_n = \M(T_n,Z_n)\,[n] \;\; \in \DM(S). 
        \]
       Since $(T_0, Z_0) \simeq (S, \emptyset)$ and $(T_1, Z_1) \simeq (\GG_{m, S}, \{t=1\} \cup \{tz=1\})$, we have $\mathcal{T}_0\simeq\QQ_S(0)$ and $\mathcal{T}_1\simeq\mathcal{K}$.
        
        We let $Z'_n\subset T_n$ denote the union of the subtori $Z_n^i$ for $i=1,\ldots,n$, so that $Z_n=Z'_n\cup Z_n^{n+1}$. For each $n \geq 1$, the pair $(Z_n^{n+1},Z_n^{n+1}\cap Z'_n)$ is naturally identified with the pair $(T_{n-1}, Z_{n-1})$, so we have a partial boundary morphism (see Definition~\ref{defi: partial boundary morphism}) along $Z_n^{n+1}$ that we denote by 
        \[
        \tau_n\colon \mathcal{T}_{n-1} \longrightarrow \mathcal{T}_n.
        \]
        These morphisms give rise to an inductive system $\mathcal{T}$.
        
        \begin{proposition}\label{prop: induction T}
        The object $\mathcal{T}_n$ belongs to the category $\MT(S)$, and the morphism $\tau_n$ fits into a short exact sequence
        \[
        0\longrightarrow \mathcal{T}_{n-1} \stackrel{\tau_n}{\longrightarrow} \mathcal{T}_n \longrightarrow \QQ_S(-n) \longrightarrow 0.
        \]
        \end{proposition}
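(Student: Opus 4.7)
The plan is to proceed by induction on $n$. The base case $n=0$ is trivial since $\mathcal{T}_0 \simeq \QQ_S(0) \in \MT(S)$ and the exact sequence claim is vacuous; for $n = 1$, Proposition~\ref{prop: kummer} gives the statement since $\mathcal{T}_1 \simeq \mathcal{K}$ and the short exact sequence is the Kummer extension.

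For the inductive step I would apply the boundary triangle of Proposition~\ref{prop: appendix boundary triangle} to $X = T_n$, $Z' = Z'_n$ and $Y = Z_n^{n+1}$, so that $Z_n = Z' \cup Y$. After shifting by $[n]$, this yields a distinguished triangle
\[
\M(Z_n^{n+1}, Z_n^{n+1} \cap Z'_n)[n-1] \xrightarrow{\;\tau_n\;} \mathcal{T}_n \longrightarrow \M(T_n, Z'_n)[n] \xrightarrow{+1}
\]
in $\DM(S)$, whose first arrow is, by construction, the partial boundary morphism along $Z_n^{n+1}$ of Definition~\ref{defi: partial boundary morphism}. The identification of pairs $(Z_n^{n+1}, Z_n^{n+1} \cap Z'_n) \simeq (T_{n-1}, Z_{n-1})$ recorded in Section~\ref{subsec: definition inductive system T} then rewrites the left-hand term as $\mathcal{T}_{n-1}$.

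Next, I would compute the right-hand term. Under $T_n \simeq \GG_{m,S}^n$, the pair $(T_n, Z'_n)$ is precisely the $n$-fold fiber product over $S$ of the pair $(\GG_{m,S}, \{1\})$, one factor for each coordinate $t_i$. A K\"unneth formula for relative cohomology motives, collected in Appendix~\ref{sec: appendix relative cohomology motives}, therefore gives
\[
\M(T_n, Z'_n) \simeq \M(\GG_{m,S}, \{1\})^{\otimes n} \simeq \bigl(\QQ_S(-1)[-1]\bigr)^{\otimes n} = \QQ_S(-n)[-n],
\]
using the definition of the Lefschetz motive in \eqref{eqn:Lefschetz}. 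After shifting, the triangle thus becomes $\mathcal{T}_{n-1} \xrightarrow{\tau_n} \mathcal{T}_n \to \QQ_S(-n) \xrightarrow{+1}$.

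Finally, the inductive hypothesis gives $\mathcal{T}_{n-1} \in \MT(S) \subset \DMT(S)$, and $\QQ_S(-n) \in \MT(S)$ by definition, so the triangle lives in $\DMT(S)$ and forces $\mathcal{T}_n \in \DMT(S)$. Since $\MT(S)$ is the heart of a t-structure on $\DMT(S)$, it is closed under extensions, and the long exact cohomology sequence of the triangle collapses to the desired short exact sequence in $\MT(S)$. The main obstacle I expect is bookkeeping on the appendix side: one must set up Proposition~\ref{prop: appendix boundary triangle} with sufficient generality so that its connecting morphism matches the formally defined partial boundary of Definition~\ref{defi: partial boundary morphism}, and establish the K\"unneth isomorphism for pairs. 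Both are formal consequences of the six-functor formalism on $\DM(-)$, but they are what makes the geometric identification of $\M(T_n,Z'_n)[n]$ with $\QQ_S(-n)$ work on the nose rather than merely up to an unspecified isomorphism.
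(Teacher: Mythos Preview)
Your proposal is correct and follows essentially the same approach as the paper: apply the partial boundary triangle of Proposition~\ref{prop: appendix boundary triangle} with $Y=Z_n^{n+1}$ and $Z'=Z'_n$, identify the third term via the K\"unneth formula as $\QQ_S(-n)$, and conclude by induction starting from $\mathcal{T}_0\simeq\QQ_S(0)$. Your extra remarks on the t-structure making the triangle into a short exact sequence simply spell out what the paper leaves implicit.
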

        
        \begin{proof}
        By Proposition~\ref{prop: appendix boundary triangle}, the morphism $\tau_n$ fits into a distinguished triangle 
        \[
        \mathcal{T}_{n-1} \stackrel{\tau_n}{\longrightarrow} \mathcal{T}_n \longrightarrow \M(T_n,Z'_n)\,[n] \stackrel{+1}{\longrightarrow}\
        \]
      in the category $\DM(S)$. Since the pair $(T_n,Z'_n)$ is the $\supth{n}$ Cartesian power of the pair $(T_1,Z'_1)$, the K\"unneth formula (see Proposition~\ref{prop: appendix kunneth}) gives an isomorphism 
      \[
      \M(T_n,Z'_n)[n]\simeq \left(\M(T_1,Z'_1)[1]\right)^{\otimes n}=\QQ_S(-n), 
      \]
      taking into account the equality $\M(T_1,Z'_1)[1]=\QQ_S(-1)$, which is the definition of the right-hand side. Starting with $\mathcal{T}_0\simeq \QQ_S(0)\in \MT(S)$, the statement then follows by induction on~$n$. 
        \end{proof}
        
        We note that Proposition~\ref{prop: induction T} and induction on $n$, starting with $\mathcal{T}_0\simeq \QQ_S(0)$, imply that the semisimplification of $\mathcal{T}_n$ is 
        $$\Gr^W\mathcal{T}_n \simeq \QQ_S(0)\oplus \QQ_S(-1) \oplus \cdots \oplus \QQ_S(-n).$$
        Therefore, $\Sym^n(\mathcal{K})$ and $\mathcal{T}_n$ have the same semisimplification. The aim of the next four subsections is to prove that these two motives are actually isomorphic for each $n \geq 0$, in a way compatible with the inductive systems: $\Sym(\mathcal{K})\simeq \mathcal{T}$ (see Theorem~\ref{thm: structureLog} below). After introducing auxiliary inductive systems $\mathcal{C}$ and $\mathcal{D}$ in Section~\ref{subsec: auxiliary C D}, this is done in three steps (achieved in Sections~\ref{subsec: first iso},~\ref{subsec: second iso},~\ref{subsec: third iso}) 
        
        \begin{remark}\label{rem: symmetric group action T}
        The symmetric group $\mathfrak{S}_{n+1}$ acts on $\mathbb{A}^{n+1}_S$ by permuting the coordinates $t_i$, and this action preserves the subschemes $T_n$ and $Z_n$. By the functoriality of relative cohomology motives (see Proposition~\ref{prop: appendix relative cohomology motives functorial}), $\mathfrak{S}_{n+1}$ thus acts on the object $\mathcal{T}_n$ of $\MT(S)$, and one can prove (by induction on~$n$) that it does so via the alternating character $\sgn_{n+1}$. We omit the proof of this fact, which will not be used in the rest of the article.
        \end{remark}

    \subsection{The auxiliary inductive systems $\boldsymbol{\mathcal{C}}$ and $\boldsymbol{\mathcal{D}}$}\label{subsec: auxiliary C D}
    
        For the remainder of this section, we make a change of coordinates in $T_n\simeq \GG_{m,S}^n$ by setting
        $$(x_1,x_2,\ldots,x_n)=(t_1,t_1t_2,\ldots,t_1t_2\cdots t_n),$$ 
        so that $Z_n$ is the union of the subvarieties $\{x_1=1\}$, $\{x_n=1/z\}$, and $\{x_i=x_{i+1}\}$ for $i=1,\ldots,n-1$. We define closed $S$-subschemes $C_n\subset D_n\subset T_n$ by 
        $$
        C_n=\bigcup_{1\leq i\leq n}\{x_i=1\}\cup\{x_i=1/z\},\quad\Delta=\bigcup_{1\leq i<j\leq n}\{x_i=x_j\},\quad\text{and}\quad D_n=C_n\cup \Delta.
        $$ Note that $Z_n$ is contained in $D_n$, and the intersection $Z_n \cap C_n$ consists of $\{x_1=1\} \cup \{x_n=1/z\}$. 
        We define objects of $\DM(S)$:
        $$\mathcal{C}_n=\M(T_n,C_n)\,[n] \quad\text{and}\quad \mathcal{D}_n=\M(T_n,D_n)\,[n].$$
        By the functoriality of relative cohomology motives (see Proposition~\ref{prop: appendix relative cohomology motives functorial}), there is a morphism
        \[\varphi_n\colon\mathcal{D}_n\longrightarrow\mathcal{C}_n.\]
        
        The symmetric group $\mathfrak{S}_n$ acts on $T_n$ by permuting the coordinates $x_i$, and this action preserves the closed subschemes $C_n$ and $D_n$. Again by the functoriality of relative cohomology motives, we thus have an action of $\mathfrak{S}_n$ on the objects $\mathcal{C}_n$ and $\mathcal{D}_n$ of $\DM(S)$, which is such that the morphism~$\varphi_n$ is $\mathfrak{S}_n$\nobreakdash-equivariant. (Note that this symmetric group action has nothing to do with the action of~$\mathfrak{S}_{n+1}$ on $T_n$ discussed in Remark~\ref{rem: symmetric group action T}.) Since~$\DM(S)$ is a pseudo-abelian category, we can speak about the alternating components $\mathcal{C}_n^\sgn$ and $\mathcal{D}_n^\sgn$, which are direct summands of~$\mathcal{C}_n$ and~$\mathcal{D}_n$, respectively, and we get an induced morphism $\varphi_n^\sgn\colon\mathcal{D}_n^\sgn\rightarrow \mathcal{C}_n^\sgn$. (We will see in the next subsection that $\mathcal{C}_n$ and $\mathcal{D}_n$ actually live in the abelian subcategory $\MT(S)$ of $\DM(S)$.)  
         
        For $i=1,\ldots,n$, consider 
        $$
        Y(i)=\{x_i=1/z\} \quad \text{and}\quad Z'(i)=\{x_i=1\}\cup\bigcup_{j\neq i}\left(\{x_j=1\}\cup\{x_j=1/z\}\right),$$ so that $C_n=Z'(i)\cup Y(i)$. There is a natural identification of the pair $(Y(i),Y(i)\cap Z'(i))$ with $(T_{n-1},C_{n-1})$, and hence a partial boundary morphism (see Definition~\ref{defi: partial boundary morphism}) along $Y(i)$ that we denote by $\gamma_n^i\colon\mathcal{C}_{n-1}\rightarrow\mathcal{C}_n$. We consider the morphism $\gamma_n\colon\mathcal{C}_{n-1}\rightarrow \mathcal{C}_n$ defined by the formula
        \begin{equation}\label{eq: transition morphism C and D}
        \gamma_n=\frac{1}{n}\sum_{i=1}^n(-1)^{n-i}\gamma^i_n.
        \end{equation}
        The collection of the motives $\mathcal{C}_n$, endowed with the transition morphisms $\gamma_n$, makes up an inductive system~$\mathcal{C}$ in $\DM(S)$. The next lemma shows that this passes to the alternating components and gives rise to a direct summand $\mathcal{C}^\sgn$ of the inductive system $\mathcal{C}$.

        \begin{lemma}\label{lem: C sgn inductive system}
        The transition morphism $\gamma_n$ induces a morphism $\gamma_n^\sgn\colon\mathcal{C}_{n-1}^\sgn\rightarrow\mathcal{C}_n^\sgn$.
        \end{lemma}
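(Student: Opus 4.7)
The plan is as follows. Let $e_k = \frac{1}{k!}\sum_{\sigma \in \mathfrak{S}_k} \sgn(\sigma)\,\sigma$ denote the antisymmetrization idempotent, so that $\mathcal{C}_k^\sgn$ is by definition its image. Showing that $\gamma_n$ factors through alternating components is equivalent to verifying $e_n \circ \gamma_n \circ e_{n-1} = \gamma_n \circ e_{n-1}$, which by the Coxeter presentation of $\mathfrak{S}_n$ reduces to the family of identities
\[ s_j \circ \gamma_n \circ e_{n-1} = -\gamma_n \circ e_{n-1}, \qquad j = 1, \ldots, n-1, \]
where $s_j = (j,j+1)$ is an adjacent transposition in $\mathfrak{S}_n$. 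I would verify these directly by comparing, for each fixed $j$, the contributions of the $n$ summands of \eqref{eq: transition morphism C and D}.

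The geometric input is the interaction of $s_j$ with each partial boundary morphism $\gamma_n^i$. Since $s_j$ acts on $T_n$ by swapping the coordinates $x_j$ and $x_{j+1}$, it preserves $Y(i) = \{x_i = 1/z\}$ when $i \notin \{j,j+1\}$ and exchanges $Y(j)$ with $Y(j+1)$ otherwise. By the functoriality of relative cohomology motives (Proposition~\ref{prop: appendix relative cohomology motives functorial}), this translates to the equalities
\[ s_j \circ \gamma_n^i = \gamma_n^i \circ s_j' \quad \text{for } i \notin \{j,j+1\}, \qquad s_j \circ \gamma_n^j = \gamma_n^{j+1}, \qquad s_j \circ \gamma_n^{j+1} = \gamma_n^j, \]
where $s_j' \in \mathfrak{S}_{n-1}$ is the adjacent transposition induced by $s_j$ on the chart $Y(i) \simeq T_{n-1}$ obtained by dropping the $i$-th coordinate (namely $s_j' = s_j$ if $i > j+1$ and $s_j' = s_{j-1}$ if $i < j$). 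The last two equalities come from a direct coordinate calculation, the induced endomorphism of $T_{n-1}$ being in both cases the identity.

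Putting everything together, the terms indexed by $i \notin \{j,j+1\}$ each acquire a factor of $-1$ from the centrality relation $s_j' \circ e_{n-1} = -e_{n-1}$ and contribute to $-\gamma_n \circ e_{n-1}$ as desired. The contribution of the remaining two terms is where I expect the only subtlety to arise: after the swap $\gamma_n^j \leftrightarrow \gamma_n^{j+1}$ induced by $s_j$, the coefficients $(-1)^{n-j}$ and $(-1)^{n-j-1}$ attached to them in \eqref{eq: transition morphism C and D} become attached to the opposite summand, but since these two signs differ by a factor of $-1$ the required identity still holds. This last observation retroactively explains why the alternating signs in \eqref{eq: transition morphism C and D} are the correct choice for the construction of the inductive system $\mathcal{C}^\sgn$.
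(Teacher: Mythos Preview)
Your argument is correct. The geometric relations you identify between the $\gamma_n^i$ and the action of adjacent transpositions are exactly right, and the term-by-term verification goes through as you describe. One small quibble: the relations $s_j\circ\gamma_n^i=\gamma_n^i\circ s_j'$ and $s_j\circ\gamma_n^j=\gamma_n^{j+1}$ follow from the functoriality of \emph{partial boundary morphisms} (Proposition~\ref{prop: appendix functoriality boundary}), not merely of relative cohomology motives.

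The paper proceeds differently. Rather than checking the Coxeter relations one generator at a time, it uses the cycles $(i\cdots n)$: from the functoriality of partial boundary morphisms one has $\gamma_n^i=(i\cdots n)\circ\gamma_n^n$, so that $\gamma_n=\xi_n\circ\gamma_n^n$ with $\xi_n=\frac{1}{n}\sum_{i=1}^n(-1)^{n-i}(i\cdots n)$. Combined with the group-algebra identity $\pi_n=\xi_n\pi_{n-1}$ (coming from the coset decomposition $\mathfrak{S}_n=\bigsqcup_i (i\cdots n)\mathfrak{S}_{n-1}$) and the $\mathfrak{S}_{n-1}$-equivariance of $\gamma_n^n$, this yields the stronger factorization $\gamma_n\circ\pi_{n-1}=\pi_n\circ\gamma_n^n$ in one stroke. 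Your approach is more hands-on and makes transparent why the alternating signs in \eqref{eq: transition morphism C and D} are forced; the paper's is slicker and packages the combinatorics into a single identity in $\QQ[\mathfrak{S}_n]$.
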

        
        \begin{proof}
        We have the following identity in the group algebra $\QQ[\mathfrak{S}_n]$. Let $\pi_n = \frac{1}{n!}\sum_{\sigma\in\mathfrak{S}_n}\sgn(\sigma)\sigma$ denote the projector onto the alternating component, and let us define
        $$\xi_n=\frac{1}{n}\sum_{i=1}^n(-1)^{n-i}(i\cdots n).$$
        Then we have $\pi_n=\xi_n\pi_{n-1}$, which follows immediately from the fact that a permutation $\sigma\in\mathfrak{S}_n$ can be uniquely written as a product $(i\cdots n)\sigma'$ with $i\in\{1,\ldots,n\}$ and $\sigma'\in\mathfrak{S}_{n-1}$. 
        The functoriality of partial boundary morphisms (see Proposition~\ref{prop: appendix functoriality boundary}) with respect to $(i\cdots n)$ implies that we have $\gamma_n^i=(i\cdots n)\circ \gamma_n^n$, so that we may write $\gamma_n=\xi_n\circ \gamma_n^n$. Again by the functoriality of partial boundary morphisms, $\gamma_n^n$ is~$\mathfrak{S}_{n-1}$\nobreakdash-equivariant, and we have
        $$\gamma_n\circ \pi_{n-1} = \xi_n\circ \gamma_n^n\circ \pi_{n-1} = (\xi_n\pi_{n-1})\circ \gamma_n^n = \pi_n\circ \gamma_n^n\ ,$$
        and thus $\gamma_n$ sends the image of $\pi_{n-1}$ to the image of $\pi_n$. The claim follows from this. 
        \end{proof} 
        
        In the same fashion, we have morphisms $\delta_n^i\colon\mathcal{D}_{n-1}\rightarrow\mathcal{D}_n$, which are partial boundary morphisms along~$\{x_i=1/z\}$, and $\delta_n\colon\mathcal{D}_{n-1}\rightarrow \mathcal{D}_n$ given by the formula
        $$\delta_n=\frac{1}{n}\sum_{i=1}^n(-1)^{n-i}\delta^i_n,$$
        which define an inductive system $\mathcal{D}$. By the same argument as in the proof of Lemma~\ref{lem: C sgn inductive system}, we see that the alternating components $\mathcal{D}_n^\sgn$ make up a direct summand inductive system $\mathcal{D}^\sgn$. 
        
        By the functoriality of partial boundary morphisms (see Proposition~\ref{prop: appendix functoriality boundary}), $\varphi_n\circ\gamma_n^i=\delta_n^i\circ\varphi_{n-1}$ holds for all $n$ and $i$, and we get morphisms of inductive systems $\varphi\colon\mathcal{D}\rightarrow\mathcal{C}$ and $\varphi^\sgn\colon\mathcal{D}^\sgn\rightarrow\mathcal{C}^\sgn$.

    \subsection{A first isomorphism: $\boldsymbol{\Sym(\mathcal{K})\simeq \mathcal{C}^\sgn}$}\label{subsec: first iso}

        \begin{proposition}\label{prop: C Sym Kummer}
        The object $\mathcal{C}_n$ lives in the category $\MT(S)$, and we have an $\mathfrak{S}_n$-equivariant isomorphism
        $$ \mathcal{K}^{\otimes n} \stackrel{\lowsimeq}{\longrightarrow} \mathcal{C}_n\otimes\sgn_n,$$
        which induces an isomorphism of inductive systems
        $$\Sym(\mathcal{K})\stackrel{\lowsimeq}{\longrightarrow}\mathcal{C}^{\sgn}.$$
        \end{proposition}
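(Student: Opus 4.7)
The plan is to realize the pair $(T_n, C_n)$ as the $n$-th Cartesian power of the pair $(\GG_{m,S}, Z)$ defining the Kummer motive, and to invoke the K\"unneth formula for relative cohomology motives. In the coordinates $(x_1, \dots, x_n)$ on $T_n \simeq \GG_{m,S}^n$, the divisor $C_n$ is by construction the union of the preimages of $Z = \{t=1\}\cup\{tz=1\}$ under the $n$ projections to $\GG_{m,S}$, so $(T_n, C_n) = (\GG_{m,S}, Z)^{\times n}$, with $\mathfrak{S}_n$ acting by permutation of factors. Applying Proposition~\ref{prop: appendix kunneth} then yields an isomorphism $\mathcal{C}_n = \M(T_n, C_n)[n] \stackrel{\lowsimeq}{\longrightarrow} \mathcal{K}^{\otimes n}$ in $\DM(S)$, which lives in the subcategory $\MT(S)$ since $\mathcal{K}$ does and $\MT(S)$ is closed under tensor products.

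The subtle point is the comparison of the two natural $\mathfrak{S}_n$-actions. The geometric permutation action on $\mathcal{C}_n$ carries no sign, whereas through K\"unneth it corresponds to the tensor permutation on $\mathcal{K}^{\otimes n}$ \emph{twisted by the Koszul sign}: because each K\"unneth factor $\M(\GG_{m,S}, Z)[1]$ is shifted by one, swapping two adjacent factors picks up a sign $-1$. This gives the desired $\mathfrak{S}_n$-equivariant isomorphism $\mathcal{K}^{\otimes n} \simeq \mathcal{C}_n \otimes \sgn_n$, and passing to invariants---legitimate since $\DM(S)$ is $\QQ$-linear and pseudo-abelian---identifies $\Sym^n(\mathcal{K}) = (\mathcal{K}^{\otimes n})^{\mathfrak{S}_n}$ with the sign-isotypic component $(\mathcal{C}_n \otimes \sgn_n)^{\mathfrak{S}_n} = \mathcal{C}_n^{\sgn}$, yielding the isomorphism at each level~$n$.

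It remains to verify compatibility with the transition morphisms, i.e.\ that $\gamma_n^\sgn$ corresponds to $\iota_n$ under the above identification. The key remark is that $\iota\colon\QQ_S\hookrightarrow\mathcal{K}$ is itself the partial boundary morphism along $\{tz=1\}$, by inspection of the triangle used in Proposition~\ref{prop: kummer}. Combining the functoriality of K\"unneth and of partial boundary morphisms (Proposition~\ref{prop: appendix functoriality boundary}), the map $\gamma_n^i\colon\mathcal{C}_{n-1}\to\mathcal{C}_n$ corresponds to the insertion $\id^{\otimes(i-1)}\otimes \iota \otimes \id^{\otimes(n-i)}$ \emph{up to a Koszul sign} $(-1)^{n-i}$ arising from moving the degree-shifting operator past the last $n-i$ shifted factors. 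This sign cancels the one in \eqref{eq: transition morphism C and D}, so that on invariants the symmetrized expression $\gamma_n^\sgn$ coincides with the symmetrized insertion $\iota_n$ from \eqref{eq: transition morphism Sym K}. The main obstacle is precisely this sign bookkeeping: I would handle it by first treating the case $i=n$, where no Koszul signs are produced, and then transporting the general case via the cycle $(i\cdots n)$ using the functoriality of partial boundary morphisms, exactly in the spirit of the proof of Lemma~\ref{lem: C sgn inductive system}.
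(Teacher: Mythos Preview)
Your proposal is correct and follows essentially the same route as the paper: realize $(T_n,C_n)$ as the $n$-th Cartesian power of $(\GG_{m,S},Z)$, apply the K\"unneth formula, track the Koszul sign to get the $\mathfrak{S}_n$-equivariance with the $\sgn_n$ twist, and then verify compatibility with the transition maps via the interaction of K\"unneth with partial boundary morphisms. The only minor difference is that where you invoke Proposition~\ref{prop: appendix functoriality boundary} together with ``functoriality of K\"unneth,'' the paper appeals directly to the packaged compatibility statement Proposition~\ref{prop: appendix kunneth boundary}, which yields the commutative square with the sign $(-1)^{n-i}$ in one step; your suggested reduction to $i=n$ followed by transport along the cycle $(i\cdots n)$ is a perfectly valid alternative way to organize the same verification.
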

        
        \begin{proof}
        The pair $(T_n,C_n)$ is the $\supth{n}$ Cartesian power of the pair $(T_1,C_1)$, and hence by the K\"{u}nneth formula (see Proposition~\ref{prop: appendix kunneth}), there is an isomorphism $\mathcal{K}^{\otimes n} =\left(\mathcal{C}_1\right)^{\otimes n}\stackrel{\lowsimeq}{\rightarrow}\mathcal{C}_n$ in $\DM(S)$. Because of the shift $[1]$ in the definition of $\mathcal{K}$, this isomorphism is $\mathfrak{S}_n$-equivariant up to the alternating character $\sgn_n$ (Koszul sign rule), whence the first claim. We therefore get an isomorphism
        $$\Sym^n(\mathcal{K}) = \left(\mathcal{K}^{\otimes n}\right)^{\mathfrak{S}_n} \simeq (\mathcal{C}_n\otimes \sgn_n)^{\mathfrak{S}_n} \simeq \mathcal{C}_n^{\sgn}.$$
        From the compatibility between partial boundary morphisms and the K\"{u}nneth formula (see Proposition~\ref{prop: appendix kunneth boundary}), we get the following commutative diagram for every $i=1,\ldots,n$, where the sign comes from the Koszul sign~rule:
        $$\diagram{
        \mathcal{K}^{\otimes n} \ar[r]^-{\simeq}& \mathcal{C}_n \\
        \mathcal{K}^{\otimes n-1} \ar[r]^-{\simeq} \ar[u]^-{\id^{\otimes (i-1)}\otimes \iota \otimes \id^{\otimes (n-i)}}& \mathcal{C}_{n-1} \ar[u]_-{(-1)^{n-i}\gamma_n^i}\rlap{.}
        }$$
        This implies, by looking at formulas \eqref{eq: transition morphism Sym K} and \eqref{eq: transition morphism C and D}, that the isomorphisms $\Sym^n(\mathcal{K})\simeq \mathcal{C}_n^\sgn$ are compatible with the inductive systems.
        \end{proof}
            
    \subsection{A second isomorphism: $\boldsymbol{\mathcal{C}^\sgn\simeq\mathcal{D}^\sgn}$}\label{subsec: second iso}
       
        \begin{proposition}\label{prop: iso C D alternating}
        The objects $\mathcal{D}_n$ live in the category $\MT(S)$. The morphism $\varphi\colon\mathcal{D}\rightarrow\mathcal{C}$ induces an isomorphism of inductive systems
$$\varphi^\sgn\colon\mathcal{D}^\sgn\stackrel{\lowsimeq}{\longrightarrow} \mathcal{C}^\sgn.$$
        \end{proposition}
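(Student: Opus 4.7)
The plan is to reduce the proposition to a single vanishing statement for the alternating component of a motive built from the big diagonal $\Delta = \bigcup_{1\leq i<j\leq n}\{x_i = x_j\}$ inside $T_n$. The filtration $C_n \subset D_n \subset T_n$ yields, by the elementary properties of relative cohomology motives collected in Appendix~\ref{sec: appendix relative cohomology motives}, a distinguished triangle
\[
\mathcal{D}_n \xrightarrow{\varphi_n} \mathcal{C}_n \longrightarrow \M(D_n, C_n)\,[n] \xrightarrow{+1}
\]
in $\DM(S)$, and an excision of the form $\M(X \cup Y, X) \simeq \M(Y, X \cap Y)$ identifies $\M(D_n, C_n) \simeq \M(\Delta, \Delta \cap C_n)$. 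Since $\mathcal{C}_n$ lies in $\MT(S)$ by Proposition~\ref{prop: C Sym Kummer}, both assertions of the proposition would follow from showing that $\M(\Delta, \Delta \cap C_n)\,[n]$ lies in $\DMT(S)$, is concentrated in cohomological degree zero, and has vanishing $\sgn_n$-isotypic component.

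To analyze the big diagonal, I would stratify $\Delta$ by set partitions of $\{1, \ldots, n\}$: for each partition $P$ with blocks $B_1, \ldots, B_k$ and $|P| = k \leq n-1$, the locally closed stratum
\[
\Delta_P^\circ = \{(x_1, \ldots, x_n) \in T_n : x_i = x_j \iff i \sim_P j\}
\]
is isomorphic to the configuration space of $k$ distinct points in $\GG_{m,S}$. The motivic Postnikov tower of Appendix~\ref{sec: appendix postnikov} would then present $\M(\Delta, \Delta \cap C_n)\,[n]$ as an iterated extension of (appropriately shifted) motives of pairs $(\Delta_P^\circ, \Delta_P^\circ \cap C_n)$, each being, via K\"unneth, a tensor product of relative motives of $\GG_{m,S}$ modulo finite collections of its $S$\nobreakdash-points; these all lie in $\MT(S)$, which yields the required membership in $\DMT(S)$ and the concentration after a careful cohomological degree bookkeeping.

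The crux of the argument is the vanishing of the alternating part. For every partition $P$ contributing to $\Delta$, at least one block $B_j$ has cardinality~$\geq 2$, and a transposition $(i\ i')$ with $i, i' \in B_j$ lies in the stabilizer of $P$ inside $\mathfrak{S}_n$ and acts trivially on $\Delta_P^\circ$, since it permutes coordinates that are constrained to coincide on the stratum. The induced action on the motive of the pair $(\Delta_P^\circ, \Delta_P^\circ \cap C_n)$ is therefore the identity, while its image in the alternating representation is $-1$; hence the projector $\pi_n = \frac{1}{n!}\sum_\sigma \sgn(\sigma)\sigma$ annihilates the contribution of every stratum. Compatibility with the inductive systems $\mathcal{C}$ and $\mathcal{D}$ then follows from the functoriality of partial boundary morphisms (Proposition~\ref{prop: appendix functoriality boundary}), exactly in the spirit of the proof of Lemma~\ref{lem: C sgn inductive system}.

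The main technical obstacle I anticipate is to extract from Appendix~\ref{sec: appendix postnikov} a version of the motivic Postnikov tower tailored to this relative diagonal stratification, and to track the $\mathfrak{S}_n$\nobreakdash-equivariance carefully through the filtration so that the alternating-projector argument applies to the filtered motive itself rather than only to its associated graded pieces.
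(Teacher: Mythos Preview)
Your high-level picture is sound: reduce $\varphi_n^\sgn$ being an isomorphism to the vanishing of the $\sgn$-part of the cofiber, and attack that cofiber via the partition stratification. But two concrete steps do not work as written.

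First, the Postnikov system of Appendix~\ref{sec: appendix postnikov} is not a tool for decomposing $\M(\Delta,\Delta\cap C_n)$ along the open strata $\Delta_P^\circ$. Theorem~\ref{thm: postnikov system} filters $j_!j^!\mathcal{F}$ for $j\colon T_n\setminus\Delta\hookrightarrow T_n$, and the paper applies it with $\mathcal{F}=(j_{T_n\setminus C_n}^{T_n})_!\QQ[n]$ to obtain a filtration of $\mathcal{D}_n$ \emph{directly}, with graded pieces $G^k\simeq\bigoplus_{|\pi|=n-k}\mathcal{C}_\pi\otimes A(\pi)^\vee$. The Arnol'd coefficients $A(\pi)^\vee$ are an essential feature of that filtration; there is no version whose graded pieces are the bare relative motives $\M(\Delta_P^\circ,\Delta_P^\circ\cap C_n)$.

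Second, your description of those hypothetical pieces is incorrect: $\Delta_P^\circ$ is a configuration space of $|P|$ distinct points in $\GG_{m,S}$, not a Cartesian product, so the K\"unneth formula does not exhibit $\M(\Delta_P^\circ,\Delta_P^\circ\cap C_n)$ as a tensor product of rank-one pieces in $\MT(S)$. In particular, the argument that these pieces lie in $\MT(S)$ in a single cohomological degree (which you need for the filtration to descend to the abelian category and for the $\sgn$-projector to be exact) is missing.

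The heart of the matter is your transposition argument. It is true that a transposition inside a block $B$ of $\pi$ acts trivially on the closed diagonal $T_\pi$, hence trivially on $\mathcal{C}_\pi$. But in the graded piece $\mathcal{C}_\pi\otimes A(\pi)^\vee$ that same transposition acts \emph{non-trivially} on the factor $A_B^\vee$, so ``transposition acts trivially, hence kills the $\sgn$-part'' does not conclude. What one needs instead is precisely the non-obvious representation-theoretic input $(A_B^\vee)^{\sgn}=0$ for $|B|\geq 2$ (Theorem~\ref{thm: appendix Arnold sign}), which is how the paper closes the argument. Your elementary idea can be salvaged only by choosing a genuinely different filtration (e.g.\ the naive closed stratification of $\Delta$), whose graded pieces turn out to be shifted copies of $\mathcal{D}_k$ for $k<n$; that route is recursive, and the degree bookkeeping you flag as an obstacle becomes a real difficulty rather than a formality.
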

        
        \begin{proof}
        Recall the notation $\Delta=\bigcup_{1\leq i<j\leq n}\{x_i=x_j\}$ for the fat diagonal in $T_n$. We define
        $$\mathcal{F} = \left(j_{T_n\setminus C_n}^{T_n}\right)_!\QQ_{T_n\setminus C_n}(0)[n]\;\in \DM(T_n),$$
        so that we have $\mathcal{C}_n=\left(p_{T_n}\right)_*\mathcal{F}\ .$
        By using $\QQ_{T_n\setminus D_n}(0)\simeq \left(j_{T_n\setminus D_n}^{T_n\setminus C_n}\right)^*\QQ_{T_n\setminus C_n}(0)$ and the base change isomorphism $\left(j_{T_n\setminus D_n}^{T_n\setminus \Delta}\right)_!\left(j_{T_n\setminus D_n}^{T_n\setminus C_n}\right)^*\simeq \left(j_{T_n\setminus \Delta}^{T_n}\right)^*\left(j_{T_n\setminus C_n}^{T_n}\right)_!,$ we get an isomorphism
        $$\mathcal{D}_n\simeq \left(p_{T_n}\right)_*\left(j_{T_n\setminus \Delta}^{T_n}\right)_!\left(j_{T_n\setminus \Delta}^{T_n}\right)^*\mathcal{F}.$$
        We now use the notation and results of Appendix~\ref{sec: appendix postnikov}. We apply Theorem~\ref{thm: postnikov system} to the motive $\mathcal{F}$ on $T_n=\GG_{m,S}^n$ and apply the functor $\left(p_{T_n}\right)_*$ to it. We obtain a Postnikov system in $\DM(S)$:
        $$\diagram{
	    0=F^n\ar[rr]&&\cdots\ar[ld]& \cdots\ar[rr] && F^1\ar[rr]\ar[ld] && F^0=\mathcal{D}_n,\ar[ld] \\
	    &G^{n-1}\ar[ul]^{+1}& && G^1\ar[ul]^{+1} && G^0 = \mathcal{C}_n\ar[ul]^{+1} &
	    }$$
        whose graded objects are 
        $$G^k= \bigoplus_{\substack{\pi\in\Pi_n \\ |\pi|=n-k}}  \left(p_{T_\pi}\right)_*\left(i_{T_\pi}^{T_n}\right)^*\mathcal{F}[-k] \;\otimes\; A(\pi)^\vee,$$
        where $T_\pi=\GG_{m,S}^\pi$ denotes the closed subscheme of $T_n$ where $x_a=x_b$ if $a$ and $b$ are in the same block of $\pi$. We have a base change isomorphism 
$$\left(p_{T_\pi}\right)_*\left(i_{T_\pi}^{T_n}\right)^*\mathcal{F}[-k]\simeq \left(p_{T_\pi}\right)_*\left(j_{T_\pi\setminus T_\pi\cap C_n}^{T_
        \pi}\right)_!\QQ_{T_\pi\setminus T_\pi\cap C_n}(0)[n-k]\simeq \mathcal{C}_\pi,$$
        where $\mathcal{C}_\pi\simeq \mathcal{C}_{n-k}$ is defined in the same way as $\mathcal{C}_n$ on the torus $T_\pi\simeq T_{n-k}$. This way we can write
        $$G^k \simeq \bigoplus_{\substack{\pi\in\Pi_n \\ |\pi|=n-k}} \mathcal{C}_\pi \otimes A(\pi)^\vee,$$
        and by Proposition~\ref{prop: C Sym Kummer}, the object $G^k$ lives in the category $\MT(S)$ for all $k$. Thus, this is also the case for the objects $F^k$ and in particular for $F^0=\mathcal{D}_n$, which proves the first claim. We thus get a descending filtration $F^k$ on 
        $\mathcal{D}_n$ whose graded quotients are the objects $G^k$. We note that this filtration is $\mathfrak{S}_n$-equivariant by Theorem~\ref{thm: postnikov system} and that the last quotient $F^0=\mathcal{D}_n\twoheadrightarrow G^0\simeq \mathcal{C}_n$ is nothing but the morphism $\varphi_n$ by construction.
        
        We now prove the equality $(G^k)^\sgn=0$ for all $k>0$. The symmetric group $\mathfrak{S}_n$ permutes the summands of $G^k$. The stabilizer of the summand indexed by a partition $\pi$ is the subgroup \hbox{$\mathfrak{S}(\pi)=\prod_{B\in \pi}\mathfrak{S}_B$}; it acts trivially on $\mathcal{C}_\pi$ (because it acts trivially on the torus $T_\pi$), and its action on $A(\pi)^\vee=\bigotimes_{B\in\pi}A_B^\vee$ is induced by the action of $\mathfrak{S}_B$ on $A_B^\vee$ for each block $B$ of $\pi$. We therefore have an inclusion
        $$(G^k)^\sgn \longhookrightarrow \bigoplus_{\substack{\pi\in \Pi_n\\ |\pi|=n-k}} \left(\mathcal{C}_\pi \otimes \bigotimes_{B\in\pi} (A_B^\vee)^\sgn\right).$$
        If $k=n-|\pi|>0$, then there is a block $B$ of $\pi$ of cardinality at least $2$, for which $(A_B^\vee)^\sgn=0$, by Theorem~\ref{thm: appendix Arnold sign}. Therefore, $(G^k)^\sgn=0$ for $k>0$. This implies by backward induction the equality $(F^k)^\sgn=0$ for $k>0$, and in particular $(F^1)^\sgn=0$. Now the short exact sequence
        $$0\longrightarrow F^1\longrightarrow \mathcal{D}_n\stackrel{\varphi_n}{\longrightarrow} \mathcal{C}_n\longrightarrow 0$$
        implies that $\varphi_n^\sgn$ is an isomorphism. 
        \end{proof}
        
        \begin{remark}\label{rem: topological interpretation iso C D alternating}
        In weight zero, the isomorphism of Proposition~\ref{prop: iso C D alternating} is a combinatorial statement that can be understood as follows in the Betti realization. Let us consider the fiber at a fixed \hbox{$z\in S(\CC)=\CC\setminus\{0,1\}$}, which for simplicity we assume to be a real number satisfying $0<z<1$. The transpose of the Betti realization of~$\varphi_n$ is the natural map
        \begin{equation}\label{eq: Betti C D}
        \H_n^{\B}(T_n,C_n)\longrightarrow \H_n^{\B}(T_n,D_n)
        \end{equation}
        which sends the class of a cycle on $T_n$ with boundary along $C_n$ to the class of the same cycle, viewed with a boundary along $D_n\supset C_n$. 
        Then the weight zero quotient of~$\H_n^{\B}(T_n,C_n)$ is $1$\nobreakdash-dimensional with basis the class of the hypercube $\square_n(z) = [1,1/z]^n$. The weight zero quotient of~$\H_n^{\B}(T_n,D_n)$ has dimension~$n!$ and has a basis consisting of the classes of the simplices 
        \begin{displaymath}
        \Delta_n^\sigma(z)=\left\{(x_1,\ldots,x_n)\in \mathbb{R}^n \; |\; 1\leq x_{\sigma^{-1}(1)}\leq \cdots \leq x_{\sigma^{-1}(n)} \leq 1/z\right\}
        \end{displaymath} for $\sigma\in \mathfrak{S}_n$. If we take the action of the symmetric group into account, then the weight zero quotient of the map \eqref{eq: Betti C D} is the natural inclusion
        $$\sgn_n \longrightarrow \sgn_n\otimes \QQ[\mathfrak{S}_n]$$
        sending the class of the hypercube $\square_n(z)$ to the alternating sum of the classes of the simplices $\Delta_n^\sigma(z)$, which reflects the fact that $\square_n(z)$ is paved by the $\Delta_n^\sigma(z)$ for $\sigma\in\mathfrak{S}_n$ (the signs are forced by the orientations). This last map clearly induces an isomorphism on the alternating components.
        \end{remark}
        
    \subsection{A third isomorphism: $\boldsymbol{\mathcal{D}^\sgn\simeq \mathcal{T}}$}\label{subsec: third iso}
    
        By the functoriality of relative cohomology motives, there is a natural morphism $\widetilde{\psi}_n\colon\mathcal{D}_n\rightarrow \mathcal{T}_n$. We let 
        $$\psi_n\colon\mathcal{D}_n\longrightarrow\mathcal{T}_n$$
        denote this morphism rescaled by the factor $n!$; \textit{i.e.}, $\psi_n=n!\; \widetilde{\psi}_n$. The next lemma shows that these morphisms induce a morphism of inductive systems $\psi\colon\mathcal{D}\rightarrow \mathcal{T}$.
            
        \begin{lemma}
        We have $\psi_n\circ \delta_n=\tau_n\circ \psi_{n-1}$.
        \end{lemma}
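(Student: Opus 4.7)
The plan is to unwind the definitions and reduce the identity to two pieces: a direct functoriality statement for the $i=n$ summand of $\delta_n$, and a vanishing statement for the $i<n$ summands, coming from the fact that two consecutive morphisms in a distinguished triangle compose to zero. Since $\psi_n=n!\,\widetilde{\psi}_n$ and $\delta_n=\tfrac{1}{n}\sum_{i=1}^n(-1)^{n-i}\delta_n^i$, we have
\[
\psi_n\circ\delta_n\;=\;(n-1)!\sum_{i=1}^n(-1)^{n-i}\,\widetilde{\psi}_n\circ\delta_n^i\qquad\text{and}\qquad \tau_n\circ\psi_{n-1}\;=\;(n-1)!\,\tau_n\circ\widetilde{\psi}_{n-1},
\]
so it suffices to establish the two identities $\widetilde{\psi}_n\circ\delta_n^n=\tau_n\circ\widetilde{\psi}_{n-1}$ and $\widetilde{\psi}_n\circ\delta_n^i=0$ for all $1\leq i<n$.

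For the first identity, writing $Y_n=\{x_n=1/z\}$ and $Z'_n=Z_n\setminus Y_n$, I would observe that $Y_n$ is precisely the component $Z_n^{n+1}$ of $Z_n$, so that both $\delta_n^n$ and $\tau_n$ arise as connecting morphisms of the distinguished triangles associated respectively to the pairs $(T_n,D_n)$ and $(T_n,Z_n)$ with $Y_n$ as distinguished closed subscheme. The inclusion $Z_n\subset D_n$ produces a morphism between these two triangles in which every vertical arrow is a restriction map; under the natural identifications $(Y_n,Y_n\cap(D_n\setminus Y_n))\simeq(T_{n-1},D_{n-1})$ and $(Y_n,Y_n\cap Z'_n)\simeq(T_{n-1},Z_{n-1})$, the vertical arrow between the third terms becomes precisely $\widetilde{\psi}_{n-1}$. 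Commutativity of the rightmost square of this morphism of triangles is then the desired equality, and is an instance of the functoriality of partial boundary morphisms (Proposition~\ref{prop: appendix functoriality boundary}).

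For the vanishing, let $1\leq i<n$; the subscheme $Y_i=\{x_i=1/z\}$ is \emph{not} a component of $Z_n$, so $Z_n\subset D_n\setminus Y_i$. I would compare the triangle defining $\delta_n^i$ with the distinguished triangle $\M(T_n,Z_n\cup Y_i)\to\M(T_n,Z_n)\to\M(Y_i,Y_i\cap Z_n)\to\M(T_n,Z_n\cup Y_i)[1]$. The obvious restriction maps $\M(T_n,D_n)\to\M(T_n,Z_n\cup Y_i)$, $\M(T_n,D_n\setminus Y_i)\to\M(T_n,Z_n)$ and $\M(Y_i,Y_i\cap(D_n\setminus Y_i))\to\M(Y_i,Y_i\cap Z_n)$ assemble into a morphism between these two triangles, and by the identification $(Y_i,Y_i\cap(D_n\setminus Y_i))\simeq(T_{n-1},D_{n-1})$ the composition $\widetilde{\psi}_n\circ\delta_n^i$ factors as
\[
\M(T_{n-1},D_{n-1})\longrightarrow\M(Y_i,Y_i\cap Z_n)\longrightarrow\M(T_n,Z_n\cup Y_i)[1]\longrightarrow\M(T_n,Z_n)[1].
\]
The last two of these maps are consecutive arrows of the bottom distinguished triangle, so their composition vanishes. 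The main subtlety will be unraveling the identifications of the various slices of $D_n$ and $Z_n$ along $Y_i$; this reduces to the straightforward observation that, after restriction to $\{x_i=1/z\}$, each component of the form $\{x_j=x_i\}$ collapses to $\{x_j=1/z\}$, so that the list of components of $D_n\cap\{x_i=1/z\}$ matches the list of components of $D_{n-1}$ in the obvious coordinates on $Y_i$.
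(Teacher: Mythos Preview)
Your argument is correct and follows the same overall plan as the paper: reduce to the two identities $\widetilde{\psi}_n\circ\delta_n^n=\tau_n\circ\widetilde{\psi}_{n-1}$ and $\widetilde{\psi}_n\circ\delta_n^i=0$ for $i<n$, then combine using $n!\cdot\tfrac{1}{n}=(n-1)!$. For $i=n$ your use of Proposition~\ref{prop: appendix functoriality boundary} is exactly what the paper does.

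For the vanishing when $i<n$, your route is valid but more elaborate than the paper's. The paper simply invokes Proposition~\ref{prop: appendix functoriality boundary} once more: since $\{x_i=1/z\}$ is not a component of $Z_n$, one may take the decomposition $Z_n=\varnothing\cup Z_n$ on the target side (i.e.\ $Y_1=\varnothing$, $Z'_1=Z_n$, noting $Z_n\subset D_n\setminus\{x_i=1/z\}$), so the commutative square of that proposition has $\M(\varnothing,\varnothing)=0$ in one corner and immediately yields $\widetilde{\psi}_n\circ\delta_n^i=0$. Your argument instead factors $\widetilde{\psi}_n$ through $\M(T_n,Z_n\cup Y_i)$, applies functoriality to land in the partial boundary triangle for $(T_n,Z_n\cup Y_i)$, and then uses that two consecutive arrows of a distinguished triangle compose to zero. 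This is a perfectly correct unpacking of the same phenomenon, just less direct; the advantage of the paper's version is that it avoids introducing the auxiliary object $\M(T_n,Z_n\cup Y_i)$ and the rotation of the triangle.
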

        
        \begin{proof}
        By the functoriality of boundary morphisms (see Proposition~\ref{prop: appendix functoriality boundary}), we have $\widetilde{\psi}_n\circ \delta_n^i=0$ for all~\hbox{$i=1,\ldots,n-1$}, and $\widetilde{\psi}_n\circ \delta_n^n=\tau_n\circ \widetilde{\psi}_{n-1}$. The claim follows since $n!\times (1/n) = (n-1)!\ $.
        \end{proof}
        
        \begin{proposition}\label{prop: third iso}
        The composite
        \begin{equation}\label{eq: composite third iso}
        \mathcal{D}^\sgn\longhookrightarrow \mathcal{D}\stackrel{\psi}{\longrightarrow} \mathcal{T}
        \end{equation}
        is an isomorphism of inductive systems.
        \end{proposition}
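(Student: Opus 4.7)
The plan is to proceed by induction on $n\geq 0$, reducing the assertion to a five-lemma argument together with a computation in a single realization. The base case $n=0$ is immediate: $\mathcal{D}_0=\mathcal{T}_0=\QQ_S(0)$, the group $\mathfrak{S}_0$ is trivial so $\mathcal{D}_0^\sgn=\mathcal{D}_0$, and $\psi_0=0!\cdot\widetilde{\psi}_0$ is the identity.

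For the inductive step, assume $\psi_{n-1}^\sgn$ is an isomorphism. Combining Propositions~\ref{prop: C Sym Kummer} and~\ref{prop: iso C D alternating}, I would obtain an isomorphism of inductive systems $\Sym(\mathcal{K})\simeq \mathcal{D}^\sgn$ in $\MT(S)$. In particular, $\mathcal{D}_n^\sgn$ lies in $\MT(S)$, and the transition morphism $\delta_n^\sgn$ is injective with cokernel $\QQ_S(-n)$ (the top weight piece of $\Sym^n(\mathcal{K})$). Together with Proposition~\ref{prop: induction T} and the preceding lemma, this produces a commutative diagram in $\MT(S)$ with exact rows
$$
\diagram{
0 \ar[r] & \mathcal{D}_{n-1}^\sgn \ar[r]^-{\delta_n^\sgn} \ar[d]^-{\psi_{n-1}^\sgn} & \mathcal{D}_n^\sgn \ar[r] \ar[d]^-{\psi_n^\sgn} & \QQ_S(-n) \ar[r] \ar[d]^-{\alpha} & 0 \\
0 \ar[r] & \mathcal{T}_{n-1} \ar[r]^-{\tau_n} & \mathcal{T}_n \ar[r] & \QQ_S(-n) \ar[r] & 0
}
$$
for some induced morphism $\alpha\in \Hom_{\MT(S)}(\QQ_S(-n),\QQ_S(-n))\simeq \QQ$. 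By the five lemma, it is then enough to show $\alpha\neq 0$.

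The main obstacle is this last nonvanishing, which I plan to check by applying a faithful realization (say, the Betti realization at some $z_0\in S(\CC)$ with $0<z_0<1$) and tracing $\alpha$ through the topological picture of Remark~\ref{rem: topological interpretation iso C D alternating}. Dually, $\alpha$ corresponds to the induced map on the one-dimensional relevant weight piece of the natural morphism $H_n^{\B}(T_n,Z_n)\to H_n^{\B}(T_n,D_n)$. The source is generated by the class of the simplex $\Delta_n^e(z_0)=\{1\leq x_1\leq \cdots\leq x_n\leq 1/z_0\}$, while the alternating part of the target is generated by $\pm\frac{1}{n!}\sum_{\sigma\in\mathfrak{S}_n}\sgn(\sigma)[\Delta_n^\sigma(z_0)]$. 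Since the natural map sends $[\Delta_n^e(z_0)]$ to itself, its projection onto the alternating part is $\pm 1/n!$ times the alternating generator; after the $n!$-rescaling built into $\psi_n$, the map becomes $\pm 1$, and hence $\alpha\neq 0$. This is precisely the step where the normalization factor $n!$ in the definition of $\psi_n$ is used in an essential way.
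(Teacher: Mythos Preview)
Your inductive framework and the five-lemma reduction to showing $\alpha\neq 0$ match the paper's proof exactly. The gap is in the final step: the Betti computation you propose checks the wrong weight piece.

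The map $\alpha$ lives on $\Gr^W_{2n}$, the \emph{top} weight graded piece of $\mathcal{D}_n^\sgn$ and $\mathcal{T}_n$. Dually in Betti homology at $z_0$, $\alpha^\vee$ acts on the weight $-2n$ \emph{subobject} of $H_n^{\B}(T_n,Z_n)$ and of $H_n^{\B}(T_n,D_n)^\sgn$; this subobject is generated by the class of the torus $(S^1)^n\subset T_n$ (a genuine cycle, not a relative one). The simplex classes $[\Delta_n^\sigma(z_0)]$ that you invoke instead generate the weight $0$ \emph{quotient} of homology (this is exactly the content of Remark~\ref{rem: topological interpretation iso C D alternating}), which corresponds to the weight $0$ subobject $\QQ_S(0)\subset \mathcal{T}_n$ in cohomology. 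That $\psi_n^\sgn$ is an isomorphism on that piece is already a consequence of the inductive hypothesis and tells you nothing about $\alpha$.

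The paper avoids realizations altogether: it observes that for any strict closed subvariety $Y\subsetneq T_n$, the natural morphism in $\MT(S)$ from $\M(T_n,Y)[n]$ (or its $\H^0$) to $\H^n(\M(T_n))$ becomes an isomorphism on $\Gr^W_{2n}$. Applying this to $Y=D_n$, $C_n$, $Z_n$, and $Z'_n$ produces a commutative diagram in which all four outer arrows are isomorphisms on $\Gr^W_{2n}$, forcing $\Gr^W_{2n}\widetilde{\psi}_n$ to be one as well. (In particular $\alpha=n!$, so the rescaling by $n!$ plays no role in the isomorphism statement at this step, contrary to what you suggest; its purpose is rather to make the transition maps compatible.) If you prefer to keep a realization-based argument, you should track the image of the torus class rather than the simplex.
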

        
        \begin{proof}
        We show by induction on $n\geq 0$ that the composite $\mathcal{D}_n^\sgn\hookrightarrow \mathcal{D}_n\stackrel{\psi}{\longrightarrow} \mathcal{T}_n$ is an isomorphism. The case~$n=0$ is trivial since $\mathcal{D}_0^\sgn= \mathcal{D}_0=\mathcal{T}_0=\QQ_S(0)$. For the inductive step, we consider the following commutative diagram in the category $\MT(S)$:
        $$\diagram{
        0 \ar[d] & & 0 \ar[d]\\
        \mathcal{D}_{n-1}^\sgn \ar[d]_{\delta_n^\sgn} \ar@{^(->}[r] & \mathcal{D}_{n-1} \ar[d]_{\delta_n}\ar[r]^{\psi_{n-1}} & \mathcal{T}_{n-1} \ar[d]^{\tau_n}\\
        \mathcal{D}_n^\sgn \ar[d]\ar@{^(->}[r]& \mathcal{D}_n \ar[r]_{\psi_n} & \mathcal{T}_n \ar[d] \\
        \QQ_S(-n) \ar[d] && \QQ_S(-n) \ar[d]\\
        0 & & 0\rlap{.}
        }$$
        The first column is exact because of Propositions~\ref{prop: C Sym Kummer} and~\ref{prop: iso C D alternating}. The third column is exact because of Proposition~\ref{prop: induction T}. Using the induction hypothesis and the five lemma, it is enough to prove that the composite
        $$\QQ_S(-n) \simeq \Gr^W_{2n}\mathcal{D}_n^\sgn \longhookrightarrow \Gr^W_{2n}\mathcal{D}_n\xrightarrow{\Gr^W_{2n}\psi_n}\Gr^W_{2n}\mathcal{T}_n\simeq \QQ_S(-n)$$
        is an isomorphism. It is enough to prove that the morphism $\Gr^W_{2n}\psi_n$ is an isomorphism. By the functoriality of relative cohomology motives, we have a commutative diagram in $\MT(S)$, obtained by taking $\H^0$ of the obvious morphisms in $\DM(S)$ (the subschemes $Z'_n\subset T_n$ were defined in Section~\ref{subsec: definition inductive system T}):
        $$\diagram{
        \mathcal{D}_n \ar[rr]^{\widetilde{\psi}_n} \ar[d]&&\mathcal{T}_n \ar[d]\\
        \mathcal{C}_n \ar[r]& \H^n\left(\M(T_n)\right)& \M(T_n,Z'_n)\,[n]\rlap{.}\ar[l]\\
        }$$
        We now take $\Gr_{2n}^W$ everywhere in this diagram and conclude that $\Gr_{2n}^W\widetilde{\psi}_n$ is an isomorphism because the other four arrows are. Indeed, the five objects appearing in the diagram are relative cohomology motives of the form $\M(T_n,Y)$, where $Y$ is a strict closed subvariety of $T_n$, and therefore have their $\Gr_{2n}^W$ isomorphic to~$\QQ(-n)$. This concludes the proof of the inductive step, and hence of the proposition.
        \end{proof}
        
        \begin{remark}
        Following up on Remark~\ref{rem: topological interpretation iso C D alternating}, let us consider the transpose of the Betti realization of the composite~\eqref{eq: composite third iso}: 
        \begin{equation}\label{eq: composite third iso Betti}
        \H_n^{\B}(T_n,Z_n)\longrightarrow \H_n^{\B}(T_n,D_n) \longtwoheadrightarrow \H_n^{\B}(T_n,D_n)^\sgn \stackrel{\;\lowsimeq}{\longleftarrow} \H_n^{\B}(T_n,C_n)^\sgn,
        \end{equation}
        where the first map is the natural one multiplied by $n!$.
        Assume that $z$ is a real number satisfying $0<z<1$. Then the weight zero quotient of $\H_n^{\B}(T_n,Z_n)$ is $1$-dimensional with basis the class of the simplex $\Delta^{\id}_n(z)$. Its image by \eqref{eq: composite third iso Betti} is 
        $$n!\,\left[\Delta^{\id}_n(z)\right] = \sum_{\sigma\in\mathfrak{S}_n}\sgn(\sigma)[\Delta_n^\sigma(z)]=[\square_n(z)].$$ 
        This explains the factor $n!$ in the definition of $\psi_n$.
        \end{remark}
        
    \subsection{A geometric description of the logarithmic system}
     
        \begin{theorem}\label{thm: structureLog}
        There is an isomorphism of inductive systems in $\MT(S)$ 
        $$\Sym(\mathcal{K})\stackrel{\lowsimeq}{\longrightarrow} \mathcal{T}.$$
        \end{theorem}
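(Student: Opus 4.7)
The plan is simply to compose the three isomorphisms of inductive systems already established in Propositions~\ref{prop: C Sym Kummer},~\ref{prop: iso C D alternating}, and~\ref{prop: third iso}. Explicitly, I would define the desired isomorphism as the composite
\[
\Sym(\mathcal{K})\stackrel{\lowsimeq}{\longrightarrow}\mathcal{C}^{\sgn}\stackrel{(\varphi^{\sgn})^{-1}}{\longrightarrow}\mathcal{D}^{\sgn}\stackrel{\lowsimeq}{\longrightarrow}\mathcal{T},
\]
where the first arrow comes from Proposition~\ref{prop: C Sym Kummer}, the middle arrow is the inverse of the isomorphism from Proposition~\ref{prop: iso C D alternating}, and the final arrow is the composite~\eqref{eq: composite third iso} of Proposition~\ref{prop: third iso}. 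Each of these three maps is already asserted to be an isomorphism of inductive systems in the abelian category $\MT(S)$, so their composition is again such an isomorphism.

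The only thing that really needs to be checked is that all three isomorphisms land in (and are compatible with) the same inductive system of mixed Tate motives, but this is exactly the content of each proposition: Proposition~\ref{prop: C Sym Kummer} produces $\Sym(\mathcal{K})\simeq \mathcal{C}^{\sgn}$ while identifying the transition morphisms defined by~\eqref{eq: transition morphism Sym K} and~\eqref{eq: transition morphism C and D} via the Koszul-sign-compatible K\"unneth formula; Proposition~\ref{prop: iso C D alternating} gives the identification of $\mathcal{D}^{\sgn}$ with $\mathcal{C}^{\sgn}$ as inductive systems by identifying the transition maps $\delta_n^{\sgn}$ and $\gamma_n^{\sgn}$; and Proposition~\ref{prop: third iso} compares the transition morphisms $\delta_n$ on $\mathcal{D}^{\sgn}$ with $\tau_n$ on~$\mathcal{T}$. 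Thus the compatibility with the inductive structures propagates through the composition without extra work.

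Consequently, no new obstacle arises at this stage; the entire difficulty of the theorem has been absorbed into the three preparatory propositions, the hardest of which is Proposition~\ref{prop: iso C D alternating} (relying on the Postnikov system of Appendix~\ref{sec: appendix postnikov} and the vanishing of the alternating part of the Orlik--Solomon pieces via Arnold's theorem). Once those are in hand, Theorem~\ref{thm: structureLog} follows by formal composition, and in particular the object $\mathcal{T}_n$ inherits from $\Sym^n(\mathcal{K})$ the structure of an iterated extension of the Tate motives $\QQ_S(0),\QQ_S(-1),\ldots,\QQ_S(-n)$ with a prescribed filtration.
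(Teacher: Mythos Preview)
Your proposal is correct and matches the paper's own proof exactly: the theorem is obtained by composing the three isomorphisms of Propositions~\ref{prop: C Sym Kummer},~\ref{prop: iso C D alternating}, and~\ref{prop: third iso}. Your additional remarks on compatibility with the transition morphisms are accurate but already contained in the statements of those propositions, so nothing further is needed.
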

        
        \begin{proof}
        The isomorphism is obtained by composing the three isomorphisms given by Propositions~\ref{prop: C Sym Kummer},~\ref{prop: iso C D alternating}, and~\ref{prop: third iso}.
        \end{proof}
        
        \begin{remark}
        A theorem of Beilinson (see \cite[Theorem 4.1]{goncharovmultiplepolylogs} and \cite[Proposition 3.4]{delignegoncharov}) identifies the dual of the Betti realization $\mathcal{T}_\B$ with the Malcev completion of the $\pi_1(\CC^\times, 1)$-torsor of paths from $1$ to $1/z$ in~$\CC^\times$. (Note that multiplication by $z$ and reversal of paths induce an isomorphism between that torsor and the fundamental path torsor based at $1$ and $z$.) More precisely, there is a tower of isomorphisms
        \begin{equation}\label{eq: iso beilinson}
        \QQ[\pi_1(\CC^\times,1,1/z)]/I^{n+1} \stackrel{\lowsimeq}{\longrightarrow} \H_n^{\B}(T_n,Z_n),
        \end{equation}
        where $I$ is the image of the augmentation ideal of the group algebra $\QQ[\pi_1(\CC^\times,1)]$. 
        Beilinson's isomorphism~\eqref{eq: iso beilinson} sends the class of a continuous path $\gamma$ from $1$ to $1/z$ in $\CC^\times$ to the class of the~$n$\nobreakdash-simplex $\Delta_n(\gamma)=\gamma^n\left(\{0\leq x_1\leq \cdots \leq x_n\leq 1\}\right)$. (If $z$ is a real number satisfying $0<z<1$ and~$\gamma$ is the straight line from $1$ to $1/z$, then $\Delta_n(\gamma)=\Delta_n^{\id}(z)$ in the notation of Remark~\ref{rem: topological interpretation iso C D alternating}.)
        It is now known \cite{delignegoncharov} that the Malcev completion of $\pi_1(\CC^\times, 1,\bullet)$ is (dual to) the Betti realization of an ind-object of $\MT(S)$, the affine ring of the \emph{motivic fundamental group} of $\mathbb{G}_m$, which is isomorphic to $\Sym(\mathcal{K})$ in Deligne's category of systems of realizations; see \cite[Proposition~14.2]{delignegroupefondamental}. Thus, Theorem~\ref{thm: structureLog} can be viewed as a motivic lift of Beilinson's theorem for~$\mathbb{G}_m$. As the referee pointed out to us, it was also proved using a different language and in a more abstract setting by Levine \cite[Proposition 9.3.3]{levinetubular} and Ayoub \cite[Theorem 3.6.44]{ayoubPhD2}.
        \end{remark}

        \begin{remark}
        An alternative strategy for proving Theorem~\ref{thm: structureLog}, which is the one adopted by \cite[Theorem 3.6.44]{ayoubPhD2}, would be as follows. Assuming that we can define by hand the structure of a commutative algebra in $\MT(S)$ on $\mathcal{T}$, the (iso)morphism $\mathcal{K}\to \mathcal{T}_1$ induces for free a morphism of commutative algebras in $\MT(S)$ from the free commutative algebra $\Sym(\mathcal{K})$ to $\mathcal{T}$, and we are left with proving that it is an isomorphism.
        \end{remark}
        
\section{The polylogarithm motive}\label{sec:2}

    \subsection{Definition}
    
        Let $n\geq 0$ be an integer. We denote by $X_n=\AA^n_S$ the $n$-dimensional affine space over $S$, with coordinates $(t_1,\ldots,t_n)$ and projection map $p_n\colon X_n\rightarrow S$. We introduce the following closed $S$\nobreakdash-subschemes of $X_n$:
        $$A_n= \{1-zt_1\cdots t_n=0\} \hspace{.3cm}\textnormal{ and }\hspace{.3cm} B_n = \{t_1(1-t_1)\cdots t_n(1-t_n)=0\}.$$
        
        \begin{definition}
        The $\supth{n}$ \emph{polylogarithm motive} is the relative cohomology motive
        $$\mathcal{L}_n = \M(X_n\setminus A_n, B_n\setminus A_n\cap B_n)\, [n] \;\; \in \DM(S).$$
        \end{definition}
        
        We will see in Theorem~\ref{thm: short exact sequence L} below that $\mathcal{L}_n$ actually belongs to the abelian category $\MT(S)$. For now let us check the $n=0$ case: Since $A_0=B_0=\varnothing\subset X_0=S$, we have $\mathcal{L}_0=\QQ_S(0) \in \MT(S)$.
        
        We introduce closed $S$-subschemes $B'_n=\{t_1(1-t_1)\cdots t_{n-1}(1-t_{n-1})t_n=0\}$ and $Y_n=\{t_n=1\}$ of~$X_n$, so that $B_n=B'_n\cup Y_n$. The pair $(Y_n\setminus Y_n\cap A_n, Y_n\cap B'_n\setminus Y_n\cap B'_n\cap A_n)$ is naturally identified with the pair~$(X_{n-1}\setminus A_{n-1},B_{n-1}\setminus A_{n-1}\cap B_{n-1})$, and hence there is a partial boundary morphism (see Definition~\ref{defi: partial boundary morphism}) along~$Y_n$ denoted by
        $$\lambda_n\colon\mathcal{L}_{n-1}\longrightarrow \mathcal{L}_n.$$
        These morphisms give rise to an inductive system $\mathcal{L}$.
        
        \begin{definition}
        The inductive system $\mathcal{L}$ is called the \emph{polylogarithmic system}.
        \end{definition}
    
    \subsection{The structure of the polylogarithm motive}
    
        \begin{theorem}\label{thm: short exact sequence L}
        The $\supth{n}$ polylogarithm motive $\mathcal{L}_n$ is an object of the category $\MT(S)$ and fits into a short exact sequence
        \begin{equation}\label{eq: short exact sequence polylogarithm}
        0 \longrightarrow \QQ_S(0) \longrightarrow \mathcal{L}_n \longrightarrow \Sym^{n-1}(\mathcal{K})(-1) \longrightarrow 0.
        \end{equation}
        \end{theorem}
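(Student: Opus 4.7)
My plan is to derive the short exact sequence \eqref{eq: short exact sequence polylogarithm} by combining two ingredients: first, an auxiliary short exact sequence
\[
0 \longrightarrow \QQ_S(0) \longrightarrow \mathcal{L}_n \longrightarrow \M(A_n, A_n\cap B_n)[n-1](-1) \longrightarrow 0
\]
(the sequence~\eqref{eq: short exact sequence a priori} advertised in the introduction) produced by the motivic Gysin triangle associated to the smooth codimension-one closed embedding $A_n\hookrightarrow X_n$; and second, the identification of the right-hand side with $\Sym^{n-1}(\mathcal{K})(-1)$, which will be immediate from Theorem~\ref{thm: structureLog} once the pair $(A_n, A_n\cap B_n)$ is identified with $(T_{n-1}, Z_{n-1})$.

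To build the auxiliary sequence, I first note that $A_n$ is smooth: at each of its points all coordinates $t_i$ are invertible (since $zt_1\cdots t_n=1$ there), so not all partial derivatives of $1-zt_1\cdots t_n$ can vanish. Applying the Gysin/localization triangle for the open immersion $X_n\setminus A_n\hookrightarrow X_n$ relative to the auxiliary closed subscheme $B_n$, through the formalism of relative cohomology motives recalled in Appendix~\ref{sec: appendix relative cohomology motives}, yields a distinguished triangle
\[
\M(X_n,B_n) \longrightarrow \M(X_n\setminus A_n, B_n\setminus A_n\cap B_n) \longrightarrow \M(A_n, A_n\cap B_n)(-1)[-1] \stackrel{+1}{\longrightarrow}
\]
in $\DM(S)$, which after shifting by $[n]$ has middle term $\mathcal{L}_n$ and right-hand term $\M(A_n, A_n\cap B_n)[n-1](-1)$.

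The two outer terms are then computed as follows. The pair $(X_n, B_n)$ is the $n$-fold Cartesian power of $(\AA^1_S, \{0,1\})$, and the long exact sequence of the latter pair combined with the contractibility of $\AA^1$ yields $\M(\AA^1_S, \{0,1\})[1]\simeq \QQ_S(0)$, so the K\"unneth formula (Proposition~\ref{prop: appendix kunneth}) gives $\M(X_n,B_n)[n]\simeq \QQ_S(0)$. For the other term, since on $A_n$ all coordinates are units, one has $A_n\cap\{t_i=0\}=\emptyset$ for every $i$, so $(A_n, A_n\cap B_n) \simeq (T_{n-1}, Z_{n-1})$ and $\M(A_n, A_n\cap B_n)[n-1] = \mathcal{T}_{n-1}$, which belongs to $\MT(S)$ by Proposition~\ref{prop: induction T}.

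Putting everything together, the shifted triangle reads $\QQ_S(0) \longrightarrow \mathcal{L}_n \longrightarrow \mathcal{T}_{n-1}(-1) \stackrel{+1}{\longrightarrow}$. Since both outer terms lie in $\MT(S)$, closure of this subcategory under extensions in the heart of the $\t$-structure on $\DMT(S)$ forces $\mathcal{L}_n\in\MT(S)$ and promotes the triangle to a short exact sequence in $\MT(S)$. Substituting $\mathcal{T}_{n-1}\simeq \Sym^{n-1}(\mathcal{K})$ via Theorem~\ref{thm: structureLog} yields the desired sequence~\eqref{eq: short exact sequence polylogarithm}. The one real subtlety lies in invoking the correctly formulated relative Gysin triangle from the appendix's toolkit; all subsequent steps are routine bookkeeping together with the nontrivial input of Theorem~\ref{thm: structureLog}, already established.
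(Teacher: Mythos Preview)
Your proposal is correct and follows essentially the same route as the paper's proof: the residue (Gysin) triangle of Proposition~\ref{prop: appendix residue} applied to the triple $(X_n,A_n,B_n)$, the K\"unneth computation $\M(X_n,B_n)[n]\simeq\QQ_S(0)$, the identification $(A_n,A_n\cap B_n)\simeq(T_{n-1},Z_{n-1})$, and the appeal to Theorem~\ref{thm: structureLog}. The only point you leave implicit that the paper states explicitly is the verification that $(X_n,A_n,B_n)$ is locally of product type (needed to invoke Proposition~\ref{prop: appendix residue}), which holds because $A_n\cup B_n$ is a normal crossing divisor.
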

        
        \begin{proof} 
        We apply Proposition~\ref{prop: appendix residue} by noting that $(X_n,A_n,B_n)$ is a triple which is locally of product type because $A_n\cup B_n$ is a normal crossing divisor, and that $A_n$ is a smooth divisor in $X_n$. We obtain a distinguished triangle in $\DM(S)$:
        \begin{equation}\label{eq: triangle polylogarithm motive proof}
        \M(X_n,B_n)\,[n] \longrightarrow \mathcal{L}_n \longrightarrow \M(A_n,A_n\cap B_n)\,(-1)\,[n-1] \stackrel{+1}{\longrightarrow}.
        \end{equation}
        The leftmost term of \eqref{eq: triangle polylogarithm motive proof} is easy to compute. The pair $(X_n,B_n)$ is the $\supth{n}$ Cartesian power of the pair~$(X_1,B_1)$, so the K\"{u}nneth formula (see Proposition~\ref{prop: appendix kunneth}) gives an isomorphism in $\MT(S)$:
        $$\M(X_n,B_n)\,[n]\simeq \left( \M(X_1,B_1)[1] \right)^{\otimes n}.$$
        We compute $\M(X_1,B_1)=\M(\mathbb{A}^1_S,\{0,1\})$ thanks to the distinguished triangle (see Proposition~\ref{prop: appendix boundary triangle})
        $$\M(\{1\})\longrightarrow \M(\mathbb{A}^1_S,\{0,1\})\,[1] \longrightarrow \M(\mathbb{A}^1_S,\{0\})\,[1]\stackrel{+1}{\longrightarrow}.$$
        The relative cohomology motive $\M(\AA^1_S,\{0\})$ vanishes because homotopy invariance implies that \hbox{$\M(\AA^1_S) \to \M(\{0\})$} is an isomorphism. Therefore, we have an isomorphism $\M(X_1,B_1)[1]\simeq \M(\{1\})\simeq \QQ_S(0)$, which leads to an isomorphism $\M(X_n,B_n)[n]\simeq \QQ_S(0)$. The rightmost term of \eqref{eq: triangle polylogarithm motive proof} is, by definition, $\mathcal{T}_{n-1}(-1),$ and hence it is isomorphic to $\Sym^{n-1}(\mathcal{K})(-1)$ by Theorem~\ref{thm: structureLog}. Since both~$\QQ_S(0)$ and $\Sym^{n-1}(\mathcal{K})(-1)$ are objects of the category $\MT(S)$, the claim follows.
        \end{proof}
        
        \begin{theorem}\label{thm: short exact sequence L inductive}
        The short exact sequence \eqref{eq: short exact sequence polylogarithm} is compatible with the inductive systems in the sense that we have for each $n\geq 1$ a commutative diagram in $\MT(S)$ with exact rows:
        $$\diagram{
        0 \ar[r] & \QQ_S(0) \ar[r] \ar@{=}[d] & \mathcal{L}_{n-1} \ar[r] \ar[d]^{\lambda_n} & \Sym^{n-2}(\mathcal{K})(-1) \ar[r] \ar[d]^{\iota_{n-1}(-1)} & 0 \\
        0 \ar[r] & \QQ_S(0) \ar[r]  & \mathcal{L}_n \ar[r]  & \Sym^{n-1}(\mathcal{K})(-1) \ar[r]  & 0\rlap{.} 
        }$$
        \end{theorem}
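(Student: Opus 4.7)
My plan is to exploit the naturality of the distinguished triangle used in the proof of Theorem~\ref{thm: short exact sequence L} with respect to the partial boundary morphism $\lambda_n$. Concretely, I would apply Proposition~\ref{prop: appendix residue} to the morphism of triples $(Y_n,Y_n\cap A_n,Y_n\cap B'_n)\hookrightarrow (X_n,A_n,B_n)$ induced by the closed immersion $Y_n\hookrightarrow X_n$; combined with the functoriality of partial boundary morphisms (Proposition~\ref{prop: appendix functoriality boundary}), this yields a commutative diagram of distinguished triangles in $\DM(S)$:
\begin{displaymath}
\diagram{
\M(X_{n-1},B_{n-1})[n-1]\ar[r]\ar[d]^{a} & \mathcal{L}_{n-1}\ar[r]\ar[d]^{\lambda_n} & \M(A_{n-1},A_{n-1}\cap B_{n-1})(-1)[n-2]\ar[d]^{b}\ar[r]^-{+1} & \\
\M(X_n,B_n)[n]\ar[r] & \mathcal{L}_n\ar[r] & \M(A_n,A_n\cap B_n)(-1)[n-1]\ar[r]^-{+1} &
}
\end{displaymath}
After applying to the rows the identifications used in the proofs of Theorems~\ref{thm: short exact sequence L} and \ref{thm: structureLog}, the task reduces to showing that $a$ becomes the identity of $\QQ_S(0)$ and that $b$ becomes $\iota_{n-1}(-1)$.

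For the left square I would use that $(X_n,B_n)$ is the product of pairs $(X_{n-1},B_{n-1})\times(\AA^1_S,\{0,1\})$, with $Y_n$ corresponding to $X_{n-1}\times\{1\}$. The K\"unneth compatibility of partial boundary morphisms (Proposition~\ref{prop: appendix kunneth boundary}) then expresses $a$ as the tensor product of $\id_{\M(X_{n-1},B_{n-1})[n-1]}$ with the boundary map $\partial\colon\M(\{1\})\to \M(\AA^1_S,\{0,1\})[1]$. But $\partial$ is precisely the isomorphism used in the proof of Theorem~\ref{thm: short exact sequence L} to identify $\M(\AA^1_S,\{0,1\})[1]$ with $\QQ_S(0)$, since the third term $\M(\AA^1_S,\{0\})[1]$ of the relevant triangle vanishes by homotopy invariance. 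Thus under the chosen identifications $a$ becomes the identity, as desired.

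For the right square, I would observe that the parametrization $(t_1,\ldots,t_{n-1})\mapsto(t_1,\ldots,t_{n-1},1/(zt_1\cdots t_{n-1}))$ induces an isomorphism $A_n\simeq T_{n-1}$ that identifies $(A_n,A_n\cap B_n)$ with $(T_{n-1},Z_{n-1})$, the divisor $Y_n\cap A_n\subset A_n$ with the subtorus $Z_{n-1}^n=\{zt_1\cdots t_{n-1}=1\}\subset T_{n-1}$, and $(A_{n-1},A_{n-1}\cap B_{n-1})$ with $(Z_{n-1}^n,Z_{n-1}^n\cap Z'_{n-1})\simeq (T_{n-2},Z_{n-2})$. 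Another application of Proposition~\ref{prop: appendix functoriality boundary} then shows that $b$ is the Tate-twist of the partial boundary morphism $\tau_{n-1}\colon\mathcal{T}_{n-2}\to\mathcal{T}_{n-1}$ along $Z_{n-1}^n$ from Section~\ref{subsec: definition inductive system T}. Since Theorem~\ref{thm: structureLog} is an isomorphism of \emph{inductive systems}, sending $\iota_{n-1}$ to $\tau_{n-1}$, it follows that $b=\iota_{n-1}(-1)$.

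The main obstacle will be keeping all of these identifications consistent: the residue along $A_n$, the K\"unneth factorization of $(X_n,B_n)$, and the two different instances of partial boundary morphisms (along $Y_n\subset X_n\setminus A_n$ in the source triangle, and along $Y_n\cap A_n\subset A_n$ in the residue triangle) must be threaded through the isomorphism $\M(A_n,A_n\cap B_n)[n-1]\simeq \Sym^{n-1}(\mathcal{K})$ of Theorem~\ref{thm: structureLog}. This is a somewhat delicate diagram chase relying on the compendium of compatibilities in Appendix~\ref{sec: appendix relative cohomology motives}, but it should pose no conceptual difficulty beyond what already appears in the proof of Theorem~\ref{thm: short exact sequence L}.
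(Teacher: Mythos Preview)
Your approach is essentially the same as the paper's, just fleshed out in more detail. The paper's proof is two sentences: the left square commutes by the functoriality of partial boundary morphisms (Proposition~\ref{prop: appendix functoriality boundary}), and the right square by the compatibility between residue morphisms and partial boundary morphisms (Proposition~\ref{prop:appendixresidueboundary}). Your expanded argument---setting up the morphism of residue triangles, identifying $a$ via the K\"unneth compatibility of Proposition~\ref{prop: appendix kunneth boundary}, and identifying $b$ with $\tau_{n-1}(-1)$ via the inductive-system statement of Theorem~\ref{thm: structureLog}---is exactly how one unpacks those two sentences.

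One citation to sharpen: the commutativity of the right square in your diagram of distinguished triangles (relating $\lambda_n$ to the residue morphisms) is not a consequence of Proposition~\ref{prop: appendix residue} together with Proposition~\ref{prop: appendix functoriality boundary}; it is precisely the content of Proposition~\ref{prop:appendixresidueboundary}, which you do not cite. Proposition~\ref{prop: appendix functoriality boundary} concerns the behavior of partial boundary morphisms under morphisms of pairs, whereas what you need here is the interaction of a partial boundary morphism with a \emph{residue} morphism along a transversal divisor. Once you invoke Proposition~\ref{prop:appendixresidueboundary} directly, your identification of $b$ with $\tau_{n-1}(-1)$ follows immediately, and the rest of your argument goes through as written.
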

        
        \begin{proof}
        The first square commutes by the functoriality of partial boundary morphisms (see Proposition~\ref{prop: appendix functoriality boundary}). The second square commutes by the compatibility between residue morphisms and partial boundary morphisms (see Proposition~\ref{prop:appendixresidueboundary}).
        \end{proof}
        
        From Theorem~\ref{thm: short exact sequence L inductive} we see that the transition morphisms $\lambda_n$ fit in short exact sequences
        \begin{equation}\label{eq: short exact sequence lambda n}
        \diagram{
        0 \ar[r]& \mathcal{L}_{n-1} \ar[r]^-{\lambda_n} & \mathcal{L}_n \ar[r]^-{\Res} & \QQ(-n) \ar[r]& 0, 
        }
        \end{equation}
        where $\Res$ is the composition of the residue morphism $\mathcal{L}_n\to \M(A_n,A_n\cap B_n)(-1)[n-1]$ with 
        $$
        \M(A_n,A_n\cap B_n)(-1)[n-1]=\H^{n-1}(\M(A_n,A_n\cap B_n))(-1) \longrightarrow \H^{n-1}(\M(A_n))(-1)\simeq \QQ(-n).$$
        
    \subsection{de Rham realization}
    
    We explicitly compute the de Rham realization of $\mathcal{L}_n$, which is an algebraic vector bundle with flat connection on $S_\QQ=\PP^1_\QQ\setminus\{0,1,\infty\}$ that we denote by $(\mathcal{V}_n,\nabla)$. We will first see that $\mathcal{V}_n$ is the trivial vector bundle of rank $n+1$ by exhibiting an explicit basis, and compute the connection. For a smooth variety~$Y$ over $S_\QQ$, we denote by $p_Y\colon Y\to S_\QQ$ the structure morphism and consider the complex of sheaves~$\Omega^\bullet(Y/S_\QQ)=\mathrm{R}^0(p_Y)_*\Omega^\bullet_{Y/S_\QQ}$ on $S_\QQ$ whose local sections are global algebraic forms on the fibers of~$Y$ over~$S_\QQ$. Since $X_n\setminus A_n$ is affine over $S_\QQ$, the vector bundle $\mathcal{V}_n$ is the $\supth{n}$ cohomology sheaf of the total complex of the double complex
    \begin{equation}\label{eq: complex computing de Rham realization}
    0\longrightarrow \Omega^\bullet\left(\left(X_n\setminus A_n\right)/S_\QQ\right) \longrightarrow \bigoplus_{i}\, \Omega^\bullet\left(\left(B_n^i\setminus A_n\cap B_n^i\right)/S_\QQ\right) \longrightarrow \cdots, 
    \end{equation}
    which involves global algebraic forms on $X_n\setminus A_n$, on the irreducible components $B_n^i\setminus A_n\cap B_n^i$ of~$B_n\setminus A_n\cap B_n$, and on their multiple intersections.

    The following construction is inspired by \cite[Section~3]{dupontoddzeta}.  Recall that the \textit{Eulerian polynomials} $E_r(x)$ are defined, for $ r \geq 0$, by the relation
    \begin{equation}\label{eq: definition Eulerian}
    \sum_{j\geq 0}(j+1)^rx^j = \frac{E_r(x)}{(1-x)^{r+1}}.
    \end{equation}
    The first examples are given by $E_0(x)=E_1(x)=1$, $E_2(x)=1+x$, $E_3(x)=1+4x+x^2$. They satisfy the recurrence relation
    \begin{equation}\label{eq:recurrence}
    E_{r+1}(x)=x(1-x)E'_r(x)+(1+rx)E_r(x). 
    \end{equation}
    For $n\geq 0$, we define differential forms  
    $$\omega_n^{(0)}=\d t_1\cdots \d t_n$$ 
    and, for each $k=1, \ldots, n$,
    \begin{displaymath}
    \omega_n^{(k)}=\frac{z\, E_{n-k}(zt_1\cdots t_n)}{(1-zt_1\cdots t_n)^{n-k+1}} \,\d t_1\cdots \d t_n.
    \end{displaymath}
    They are global sections of the sheaf $\Omega^n((X_n\setminus A_n)/S_\QQ)$. Since $X_n\setminus A_n$ has relative dimension $n$ over~$S$, they automatically define global sections of the vector bundle $\mathcal{V}_n$.
    
    \begin{proposition}\label{prop: de rham basis}
    The classes of $\omega_n^{(0)},\ldots,\omega_n^{(n)}$ form a basis of the algebraic vector bundle $\mathcal{V}_n$, and the connection $\nabla\colon \mathcal{V}_n\to \mathcal{V}_n\otimes_{\mathcal{O}_S}\Omega^1_{S_\QQ/\QQ}$ satisfies
    $$\nabla\left(\left[\omega_n^{(0)}\right]\right)=0, \quad \nabla\left(\left[\omega_n^{(1)}\right]\right) = \left[\omega_n^{(0)}\right]\otimes \frac{\d z}{1-z}, \quad \nabla\left(\left[\omega_n^{(k)}\right]\right) = [\omega_n^{(k-1)}]\otimes \frac{\d z}{z} \quad (k=2,\ldots,n).$$
    \end{proposition}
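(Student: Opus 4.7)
The plan is to compute the Gauss--Manin connection by direct differentiation, and then to use the resulting formulas together with a period computation to deduce the basis statement.

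Since each $\omega_n^{(k)}$ is a relative top form, its absolute differential reduces to the $dz$-contribution, $d\omega_n^{(k)} = \partial_z(\textnormal{coefficient})\, dz \wedge dt_1 \cdots dt_n$. Setting $u = z t_1 \cdots t_n$ and $r = n - k$, the Eulerian recurrence \eqref{eq:recurrence} yields after a short computation
\[
\partial_z\!\left(\frac{z E_r(u)}{(1-u)^{r+1}}\right) = \frac{E_{r+1}(u)}{(1-u)^{r+2}}.
\]
For $k \geq 2$ this is exactly the coefficient of $\tfrac{1}{z}\omega_n^{(k-1)}$, giving $\nabla[\omega_n^{(k)}] = [\omega_n^{(k-1)}] \otimes \tfrac{dz}{z}$; the case $k = 0$ is immediate since $\omega_n^{(0)}$ does not depend on~$z$.

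The case $k = 1$ is the main obstacle: the identity above yields $d\omega_n^{(1)} = \tfrac{E_n(u)}{(1-u)^{n+1}}\, dz \wedge dt_1 \cdots dt_n$, and one must show that this represents the same class as $\tfrac{dz}{1-z} \wedge \omega_n^{(0)}$, i.e.\ that $\bigl(\tfrac{E_n(u)}{(1-u)^{n+1}} - \tfrac{1}{1-z}\bigr) dt_1 \cdots dt_n$ is a coboundary in the total complex \eqref{eq: complex computing de Rham realization}. For $n = 1$ a direct antiderivation with respect to $t_1$ produces the primitive $\tfrac{z t_1 (t_1 - 1)}{(1-z)(1-z t_1)}$, which vanishes on $B_1 = \{t_1 = 0\} \cup \{t_1 = 1\}$, settling the claim. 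For general $n$, the natural strategy is to integrate out $t_n$ and proceed by induction: an explicit $t_n$-antiderivative of $\tfrac{E_n(u)}{(1-u)^{n+1}}$, obtained by unwinding the generating-function identity for $E_n$, produces an $(n-1)$-form on $X_n \setminus A_n$, and the boundary evaluations at $t_n \in \{0, 1\}$ reduce to the $(n-1)$-dimensional version of the same statement. Producing this coboundary is the technical heart of the proof.

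Finally, for the basis statement, Theorem~\ref{thm: short exact sequence L} gives that $\mathcal{V}_n$ has rank $n + 1$, so it is enough to prove the $n + 1$ classes $[\omega_n^{(k)}]$ are $\mathcal{O}_S$-linearly independent; that they generate $\mathcal{V}_n$ then follows because the sub-$\mathcal{O}_S$-module they span is closed under the connection (by the formulas above) and its graded pieces match those of $\mathcal{V}_n$. Linear independence is obtained by pairing with the Betti cycle $[0, 1]^n$: expanding $E_r(u)/(1-u)^{r+1} = \sum_{j \geq 0}(j+1)^r u^j$ and integrating termwise yields
\[
\int_{[0,1]^n} \omega_n^{(k)} = \Li_k(z) \quad (1 \leq k \leq n), \qquad \int_{[0,1]^n} \omega_n^{(0)} = 1,
\]
and $1, \Li_1(z), \ldots, \Li_n(z)$ are $\mathcal{O}_S$-linearly independent as multivalued analytic functions on $S(\mathbb{C})$.
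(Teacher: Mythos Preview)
Your approach to the connection formulas diverges from the paper's and is in some respects more direct. For $k=0$ and $k\geq 2$ you differentiate in $z$ and invoke the Eulerian recurrence, which is correct; the paper in fact notes in a remark after the proof that these identities hold already at the level of forms. The paper instead argues uniformly by induction on $n$ through the de Rham realization $\lambda_{n,\dR}$ of the partial boundary map along $\{t_n=1\}$: for each $k<n$ it exhibits an $(n-1)$-form $\eta$ restricting to $\omega_{n-1}^{(k)}$ on $\{t_n=1\}$ and vanishing on the other components of $B_n$, checks $d\eta=\pm\omega_n^{(k)}$, and transfers the connection formula from stage $n-1$. Your proposed treatment of $k=1$ for $n\geq 2$ (``integrate out $t_n$ and reduce to $n-1$'') is, once executed, exactly this computation: the primitive $\eta=\tfrac{t_n E_{n-1}(u)}{(1-u)^n}\,\d t_1\cdots \d t_{n-1}$ does the job, its only nonzero boundary restriction is at $t_n=1$, and that restriction is the $(n-1)$-dimensional statement. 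So you have correctly located the technical heart and the right mechanism; you have simply not written down the primitive.

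The basis argument, however, has a genuine gap. Over the ring $\mathcal{O}(S_\QQ)$, linear independence of $n+1$ global sections of a rank-$(n+1)$ vector bundle does \emph{not} imply that they generate it, so pairing with the single Betti cycle $[0,1]^n$ is insufficient, and your appeal to ``graded pieces match those of $\mathcal{V}_n$'' is not justified at this point (the weight filtration is only identified later, and identifying it presupposes the basis). A correct completion along your lines: the $\mathcal{O}_S$-span $\mathcal{W}$ of the $[\omega_n^{(k)}]$ is $\nabla$-stable by the formulas just established and has full generic rank by your period argument; hence $\mathcal{V}_n/\mathcal{W}$ is a torsion $\mathcal{O}_S$-module carrying a connection, and the standard characteristic-zero argument (a connection annihilates torsion) forces it to vanish. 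The paper bypasses this entirely: it reads the basis statement off the short exact sequence $0\to\mathcal{V}_{n-1}\xrightarrow{\lambda_{n,\dR}}\mathcal{V}_n\xrightarrow{\Res_\dR}\mathcal{O}_{S_\QQ}\to 0$, having shown that $\lambda_{n,\dR}$ carries the $\omega_{n-1}^{(k)}$ to $\pm\omega_n^{(k)}$ for $k<n$ and that $\Res_\dR([\omega_n^{(n)}])$ is a generator of the quotient.
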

    
    \begin{proof}
    We proceed by induction on $n$. The statement is clear for $n=0$. For the induction step, with $n\geq 1$, we use the following short exact sequence of algebraic vector bundles with flat connections on $S$, which is the de Rham realization of \eqref{eq: short exact sequence lambda n}:
    \begin{equation}\label{eq: short exact sequence lambda n de Rham}
    0 \lra (\mathcal{V}_{n-1},\nabla) \xrightarrow{\lambda_{n,\dR}}  (\mathcal{V}_n,\nabla) \xrightarrow{\Res_\dR} \left(\mathcal{H}^{n-1}_\dR\left(A_n/S_\QQ\right),\nabla\right)\simeq (\mathcal{O}_{S_\QQ},\d)\lra 0.
    \end{equation}
    The morphism $\lambda_{n,\dR}$ is induced by the inclusion inside \eqref{eq: complex computing de Rham realization} of the similar complex computing~$\mathcal{V}_{n-1}$. Therefore, for each global section $\omega$ of  $\Omega^{n-1}((X_{n-1}\setminus A_{n-1})/S_\QQ)$, we have
    $$
    \lambda_{n,\dR}([\omega]) = -[\d\eta],
    $$
    where $\eta$ is any global section of $\Omega^n((X_n\setminus A_n)/S_\QQ)$ such that $\eta_{|\{t_n=1\}}=\omega$ and $\eta$ vanishes when restricted to~$\{t_n=0\}$ and all the $\{t_i=0\}$ and $\{t_i=1\}$ for $i=1,\ldots,n-1$. 
    \begin{enumerate}
    \item For $\omega=\omega_{n-1}^{(0)}$, we may choose $\eta=t_n\, \d t_1\cdots \d t_{n-1}$, and we get 
    \begin{equation}\label{eq: what lambda does in weight 0}
    \lambda_{n,\dR}\left(\left[\omega_{n-1}^{(0)}\right]\right) = (-1)^n \left[\omega_n^{(0)}\right].
    \end{equation}
    Since $\lambda_{n,\dR}$ commutes with the connections, the induction hypothesis implies that 
    \begin{equation}\label{eq: nabla in weight 0}
    \nabla\left(\left[\omega_n^{(0)}\right]\right)=0.
    \end{equation}
    \item For $\omega=\omega_{n-1}^{(k)}$ with $k=1,\ldots,n-1$, we may choose
    $$\eta = \frac{z t_n E_{n-k-1}(zt_1\cdots t_n)}{(1-zt_1\cdots t_n)^{n-k}}\d t_1\cdots \d t_{n-1}.$$
    The recurrence relation \eqref{eq:recurrence} readily implies the equality $\d\eta = (-1)^{n-1} \omega_n^{(k)}$, and hence
    \begin{equation}\label{eq: what lambda does in weight k}
    \lambda_{n,\dR}\left(\left[\omega_{n-1}^{(k)}\right]\right) = (-1)^{n} \left[\omega_n^{(k)}\right] \quad (k=1,\ldots,n-1).
    \end{equation}
    Since $\lambda_{n,\dR}$ commutes with the connections, the induction hypothesis implies that 
    \begin{equation}\label{eq: nabla in weight k}
    \nabla\left(\left[\omega_n^{(k)}\right]\right) = \left[\omega_n^{(k-1)}\right]\otimes \frac{\d z}{z} \quad (k=2,\ldots,n-1).
    \end{equation}
    \item The induction hypothesis also implies that
    $$\nabla\left(\left[\omega_n^{(1)}\right]\right) = \left[\omega_n^{(0)}\right]\otimes \frac{\d z}{1-z}$$
    if $n\geq 2$, and we need to treat the case of $[\omega_1^{(1)}]$ by hand. For this, we compute
    $$\frac{\partial}{\partial z}\omega_1^{(1)} - \frac{1}{1-z}\omega_1^{(0)} = \frac{\d t}{(1-zt)^2} - \frac{1}{1-z}\d t = \d\nu \quad \text{with}\ \nu=-\frac{z}{1-z}\frac{t(1-t)}{1-zt}.$$
    Since $\nu$ vanishes at $t=0$ and $t=1$, this means that $[d\nu]=0$ holds in relative cohomology, whence the result:
    \begin{equation}\label{eq: nabla in weight 1}
    \nabla\left(\left[\omega_1^{(1)}\right]\right) = \left[\omega_1^{(0)}\right]\otimes \frac{\d z}{1-z}.
    \end{equation}
    \item The map $\Res_\dR$ appearing in \eqref{eq: short exact sequence lambda n de Rham} is induced by the residue map along the hypersurface \hbox{$A_n=\{zt_1\cdots t_n=1\}$}. Taking the equality
    $$\omega_n^{(n)} = \frac{z\, \d t_1\cdots \d t_n}{1-zt_1\cdots t_n} = (-1)^{n-1} \d\log(1-zt_1\cdots t_n)\wedge \d\log(t_1)\wedge\cdots \wedge \d\log(t_{n-1})$$
    into account, we see that 
    $$\Res\left(\left[\omega_n^{(n)}\right]\right) = (-1)^{n-1} \left[\d\log(t_1)\wedge \cdots \wedge \d\log(t_{n-1})\right],$$
    which is a basis of $\mathcal{H}^{n-1}_\dR(A_n/S_\QQ)\simeq \mathcal{O}_{S_\QQ}$. Along with \eqref{eq: what lambda does in weight 0} and \eqref{eq: what lambda does in weight k}, this implies that the classes of~$\omega_n^{(0)},\ldots,\omega_n^{(n)}$ form a basis of $\mathcal{V}_n$. In view of \eqref{eq: nabla in weight 0}, \eqref{eq: nabla in weight k}, \eqref{eq: nabla in weight 1}, we are left with proving the formula
    $$\nabla\left(\left[\omega_n^{(n)}\right]\right) = \left[\omega_n^{(n-1)}\right]\otimes \frac{\d z}{z}.$$
    It follows from an easy computation:
    $$\frac{\partial}{\partial z}\omega_n^{(n)} = \frac{\partial}{\partial z} \frac{z\,\d t_1\cdots dt_n}{1-zt_1\cdots t_n} = \frac{\d t_1\cdots \d t_n}{(1-zt_1\cdots t_n)^2} = \frac{1}{z}\omega_n^{(n-1)}.$$
    \end{enumerate}
    This concludes the induction step and the proof.
    \end{proof}
    
    \begin{remark}
    One could choose to define $\omega_0^{(n)}$ by the same formula as the other $\omega_k^{(n)}$. This would only result in a base change (gauge transformation) in the connection matrix.
    \end{remark}
    
    \begin{remark}
    One can note that the recurrence relation \eqref{eq:recurrence} implies the identities
    $$\frac{\partial}{\partial z} \omega_n^{(0)} =  0 \quad \text{and} \quad \frac{\partial}{\partial z} \omega_n^{(k)}=\frac{1}{z}\omega_n^{(k-1)} \quad (k=2,\ldots,n)$$
    already at the level of differential forms. However, the relation
    $$\nabla\left(\left[\omega_n^{(1)}\right]\right)=\left[\omega_n^{(0)}\right]\otimes \frac{\d z}{1-z}$$
    is only true at the level of relative cohomology classes.
    \end{remark}
    
    \subsection{Hodge realization}
    
    We first compute the weight and Hodge filtrations on the algebraic vector bundle $\mathcal{V}_n$.
    
    \begin{proposition}\label{prop: de rham weight hodge}
    The weight and Hodge filtrations on $\mathcal{V}_n$ are given, for all integers $k$, by
    $$W_{2k}\mathcal{V}_n = \mathcal{O}_{S_\QQ}\left[\omega_n^{(0)}\right] \oplus\cdots \oplus \mathcal{O}_{S_\QQ}\left[\omega_n^{(k)}\right] \quad \text{and} \quad F^k\mathcal{V}_n = \mathcal{O}_{S_\QQ}\left[\omega_n^{(k)}\right]\oplus\cdots \oplus \mathcal{O}_{S_\QQ}\left[\omega_n^{(n)}\right].$$
    \end{proposition}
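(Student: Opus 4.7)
The plan is to argue by induction on $n$, using the short exact sequence~\eqref{eq: short exact sequence lambda n} and the strictness of morphisms of variations of mixed Hodge--Tate structures with respect to the weight and Hodge filtrations. The base case $n=0$ is trivial since $\mathcal{L}_0=\QQ_S(0)$ and $\mathcal{V}_0=\mathcal{O}_{S_\QQ}[\omega_0^{(0)}]$ is pure of weight~$0$ and type~$(0,0)$. For the inductive step, I would apply the Hodge realization to~\eqref{eq: short exact sequence lambda n} to obtain a short exact sequence of variations of mixed Hodge--Tate structures whose underlying de Rham incarnation is
\begin{equation*}
0\longrightarrow (\mathcal{V}_{n-1},W,F)\xrightarrow{\lambda_{n,\dR}}(\mathcal{V}_n,W,F)\xrightarrow{\Res_\dR}(\mathcal{O}_{S_\QQ},W,F)\longrightarrow 0,
\end{equation*}
where the quotient is pure of weight $2n$ and Hodge type $(n,n)$, i.e., $W_{2n-1}\mathcal{O}_{S_\QQ}=0$, $W_{2n}\mathcal{O}_{S_\QQ}=\mathcal{O}_{S_\QQ}$, and $F^n\mathcal{O}_{S_\QQ}=\mathcal{O}_{S_\QQ}\supsetneq F^{n+1}\mathcal{O}_{S_\QQ}=0$. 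Strictness gives short exact sequences $0\to W_{2k}\mathcal{V}_{n-1}\to W_{2k}\mathcal{V}_n\to W_{2k}\mathcal{O}_{S_\QQ}\to 0$ and $0\to F^k\mathcal{V}_{n-1}\to F^k\mathcal{V}_n\to F^k\mathcal{O}_{S_\QQ}\to 0$ for every~$k$.

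For the weight filtration, the inductive computation is straightforward: when $k<n$, one has $W_{2k}\mathcal{O}_{S_\QQ}=0$, so $W_{2k}\mathcal{V}_n=\lambda_{n,\dR}(W_{2k}\mathcal{V}_{n-1})$, which by the induction hypothesis and the identities~\eqref{eq: what lambda does in weight 0} and~\eqref{eq: what lambda does in weight k} equals $\bigoplus_{j=0}^k\mathcal{O}_{S_\QQ}[\omega_n^{(j)}]$; when $k\geq n$, we have $W_{2k}\mathcal{V}_n=\mathcal{V}_n=\bigoplus_{j=0}^n\mathcal{O}_{S_\QQ}[\omega_n^{(j)}]$ by Proposition~\ref{prop: de rham basis}.

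For the Hodge filtration the key step is to handle $k=n$, which is where the crux of the argument lies. Since the induction hypothesis gives $F^n\mathcal{V}_{n-1}=0$, the residue map induces an isomorphism $F^n\mathcal{V}_n\stackrel{\lowsimeq}{\to}\mathcal{O}_{S_\QQ}$; in particular $F^n\mathcal{V}_n$ is locally free of rank one. To show that it is generated by $[\omega_n^{(n)}]$, pick any lift $v\in F^n\mathcal{V}_n$ of the basis element $\Res_\dR([\omega_n^{(n)}])=(-1)^{n-1}[\d\log t_1\wedge\cdots\wedge\d\log t_{n-1}]$ of $\mathcal{O}_{S_\QQ}$ exhibited in the proof of Proposition~\ref{prop: de rham basis}; then $v-[\omega_n^{(n)}]\in \ker\Res_\dR=\lambda_{n,\dR}(\mathcal{V}_{n-1})$, i.e., $v=[\omega_n^{(n)}]+\lambda_{n,\dR}(u)$ for some $u\in\mathcal{V}_{n-1}$, and strictness forces $u\in F^n\mathcal{V}_{n-1}=0$, whence $[\omega_n^{(n)}]=v\in F^n\mathcal{V}_n$.

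Having established that $[\omega_n^{(n)}]\in F^n\mathcal{V}_n\subseteq F^k\mathcal{V}_n$ for all $k\leq n$, the remaining cases follow by comparing ranks in the short exact sequence for $F^k$: the induction hypothesis gives $F^k\mathcal{V}_{n-1}=\bigoplus_{j=k}^{n-1}\mathcal{O}_{S_\QQ}[\omega_{n-1}^{(j)}]$, which maps via $\lambda_{n,\dR}$ onto $\bigoplus_{j=k}^{n-1}\mathcal{O}_{S_\QQ}[\omega_n^{(j)}]$ by~\eqref{eq: what lambda does in weight k}, and adjoining $[\omega_n^{(n)}]$ provides the required lift of a basis of $F^k\mathcal{O}_{S_\QQ}$; thus $F^k\mathcal{V}_n=\bigoplus_{j=k}^n\mathcal{O}_{S_\QQ}[\omega_n^{(j)}]$. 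Finally, for $k>n$, one has $F^k\mathcal{V}_n=F^k\mathcal{V}_{n-1}=0$ by induction. The main obstacle in the argument is the passage through $k=n$, where one needs to recognize that the specific class $[\omega_n^{(n)}]$ (and not merely \emph{some} lift of $\Res_\dR([\omega_n^{(n)}])$) belongs to the Hodge filtration $F^n$; this is resolved by the strictness argument above.
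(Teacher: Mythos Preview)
Your inductive strategy for the weight filtration matches the paper's and is correct. The problem lies in your treatment of the Hodge filtration at the step $k=n$, where the argument is circular.

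You pick a lift $v\in F^n\mathcal{V}_n$ of a generator of $F^n\mathcal{O}_{S_\QQ}$ and write $v-[\omega_n^{(n)}]=\lambda_{n,\dR}(u)$ for some $u\in\mathcal{V}_{n-1}$. You then assert that ``strictness forces $u\in F^n\mathcal{V}_{n-1}=0$.'' But strictness of $\lambda_{n,\dR}$ only says that $\lambda_{n,\dR}(\mathcal{V}_{n-1})\cap F^n\mathcal{V}_n=\lambda_{n,\dR}(F^n\mathcal{V}_{n-1})$. To apply this to $\lambda_{n,\dR}(u)=v-[\omega_n^{(n)}]$ you would need to know that $v-[\omega_n^{(n)}]\in F^n\mathcal{V}_n$; since $v\in F^n\mathcal{V}_n$, this amounts to $[\omega_n^{(n)}]\in F^n\mathcal{V}_n$, which is precisely the claim under proof. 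Strictness guarantees that \emph{some} lift of the generator lies in $F^n\mathcal{V}_n$, but gives no control over which one; nothing in your argument distinguishes $[\omega_n^{(n)}]$ from $[\omega_n^{(n)}]+\lambda_{n,\dR}(w)$ for an arbitrary $w\in\mathcal{V}_{n-1}$.

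The paper closes this gap by a direct geometric argument: it observes that $\omega_n^{(n)}=\frac{z\,\d t_1\cdots\d t_n}{1-zt_1\cdots t_n}$ extends to a logarithmic $n$-form on a suitable compactification of $X_n\setminus A_n$ over $S_\QQ$, and hence its class lies in $F^n$ by the very description of the Hodge filtration via the logarithmic de Rham complex (referring to \cite[Proposition~3.12]{dupontoddzeta} for the analogous verification). This is genuine geometric input that cannot be extracted from strictness alone.
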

    
    \begin{proof}
    For the weight filtration, we first note that the morphism $\lambda_{n,\dR}$ in the short exact sequence~\eqref{eq: short exact sequence lambda n de Rham} is strictly compatible with the weight filtrations on $\mathcal{V}_{n-1}$ and $\mathcal{V}_n$, and that its cokernel $(\mathcal{O}_{S_\QQ},\d)$ is concentrated in weight $2n$. The statement therefore follows by induction on $n$ using the identities \eqref{eq: what lambda does in weight 0} and \eqref{eq: what lambda does in weight k}. Regarding the Hodge filtration, the same inductive reasoning gives the statement provided that we can prove that $[\omega_n^{(n)}]$ belongs to $F^n\mathcal{V}_n$. This is a consequence of the fact that $\omega_n^{(n)}$ is a logarithmic form on some compactification of $X_n\setminus A_n$ over~$S_\QQ$, as in the proof of \cite[Proposition 3.12]{dupontoddzeta}.
    \end{proof}

    \begin{theorem}\label{thm: hodge realization polylog}
    The Hodge realization of $\mathcal{L}_n$ is the $\supth{n}$ polylogarithmic variation of mixed Hodge structures described in the introduction.
    \end{theorem}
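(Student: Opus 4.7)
Propositions~\ref{prop: de rham basis} and~\ref{prop: de rham weight hodge} together identify, via the correspondence $[\omega_n^{(k)}]\leftrightarrow e_k$, the de Rham vector bundle with its connection, weight filtration, and Hodge filtration of the Hodge realization $\mathcal{L}_{n,\H}$ with those of $\polH$. What remains is to match the underlying Betti $\QQ$-local systems.

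I would proceed by induction on $n$. The base case $n=0$ is trivial since $\mathcal{L}_0=\QQ_S(0)$. For the inductive step, applying the Hodge realization functor to the short exact sequence of Theorem~\ref{thm: short exact sequence L} yields
\[
0\longrightarrow \QQ(0)\longrightarrow \mathcal{L}_{n,\H}\longrightarrow \Sym^{n-1}(\KummerH)(-1)\longrightarrow 0,
\]
using that Hodge realization is compatible with symmetric powers and Tate twists, together with the standard identification of the Hodge realization of $\mathcal{K}$ with the Kummer variation $\KummerH$. The variation $\polH$ fits into the analogous short exact sequence obtained by truncating~\eqref{eq: short exact sequence polylog hodge} at level $n$. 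Both $\mathcal{L}_{n,\H}$ and $\polH$ thus define extension classes in a common $\Ext^1$-group computed in the category of variations of mixed Hodge--Tate structures on $S$, and the problem reduces to matching these two classes.

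Since the de Rham data have already been shown to agree, the two extensions can differ only through their Betti $\QQ$-structure, and it is enough to exhibit a single $\QQ$-rational section that is common to both. Concretely, for $z\notin [1,\infty)$ the relative $n$-cycle $[0,1]^n\subset X_n(\CC)\setminus A_n(\CC)$ lifts the canonical generator of $\QQ(0)^\vee$ inside $\mathcal{L}_{n,\H}^\vee$, and a direct computation using the defining identity~\eqref{eq: definition Eulerian} of the Eulerian polynomials together with term-by-term integration yields
\[
\int_{[0,1]^n}\omega_n^{(0)}=1\quad\text{and}\quad \int_{[0,1]^n}\omega_n^{(k)}=\Li_k(z)\quad (k=1,\ldots,n).
\]
This reproduces precisely the first row of the period matrix $\Lambda_n(z)$, which by construction is a $\QQ$-rational section of $(\polH)^\vee$ (cf.\ Remark~\ref{rem: dual}).

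The main obstacle is to propagate this single row computation to a full identification of the $\QQ$-lattice. I would handle this by combining the period computation above with the induction hypothesis applied to $\mathcal{L}_{n-1,\H}$, which identifies the rational structure on the kernel of the residue map in the short exact sequence~\eqref{eq: short exact sequence lambda n}, and with the fact that the quotient $\Gr^W_{2n}\mathcal{L}_{n,\H}\simeq \QQ(-n)$ inherits its $\QQ$-structure from the normalization~\eqref{eqn:Lefschetz} of the Lefschetz motive, which matches the normalization by $(2\pi\i)^n$ visible at the bottom-right corner of $\Lambda_n(z)$. Taken together, these three pieces of data cut out a unique rational lattice inside the space of flat sections and identify $\mathcal{L}_{n,\H}$ with $\polH$.
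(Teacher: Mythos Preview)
Your approach through the period computation is essentially the paper's: identify the de Rham data via Propositions~\ref{prop: de rham basis} and~\ref{prop: de rham weight hodge}, dualize the short exact sequence of Theorem~\ref{thm: short exact sequence L} to get
\[
0\longrightarrow \Sym^{n-1}(\mathcal{K}_\B^\vee)\longrightarrow \mathcal{L}_{n,\B}^\vee\longrightarrow \QQ(0)\longrightarrow 0,
\]
and compute the periods of the cube $[0,1]^n$ to see that it lifts the generator of the rank-$1$ quotient and pairs with the de Rham basis to give the first row of $\Lambda_n(z)$. Your own sentence ``it is enough to exhibit a single $\QQ$-rational section that is common to both'' is exactly right, and at that point the proof is finished: the subobject $\Sym^{n-1}(\mathcal{K}_\B^\vee)$ is already a known local system (the Hodge realization of $\Sym^{n-1}(\mathcal{K})$ is $\Sym^{n-1}(\KummerH)$), so its $\QQ$-lattice supplies the remaining $n$ rows, and together with the lift $[0,1]^n$ the full lattice is determined. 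The paper stops precisely here.

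Your final paragraph is therefore unnecessary and in fact contradicts what precedes it. You switch from the sequence of Theorem~\ref{thm: short exact sequence L} to the sequence~\eqref{eq: short exact sequence lambda n} and invoke an induction hypothesis on $\mathcal{L}_{n-1}$ that plays no role in the argument you have already completed. Worse, taken on its own this alternative route has a gap: in the dual of~\eqref{eq: short exact sequence lambda n} the quotient is $\mathcal{L}_{n-1,\B}^\vee$ of rank $n$, and knowing one lift (row $0$) together with the rank-$1$ sub $\QQ(n)_\B$ does \emph{not} determine the lifts of rows $1$ through $n-1$; each is ambiguous by a rational multiple of row $n$, and your assertion that ``these three pieces of data cut out a unique rational lattice'' is not justified. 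Drop the last paragraph and the inductive framing, and your proof coincides with the paper's.
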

    
    \begin{proof}
    By Propositions~\ref{prop: de rham basis} and~\ref{prop: de rham weight hodge}, the analytic vector bundle with flat connection $(\mathcal{V}^{\an},\nabla^{\an})$ on~\hbox{$\mathbb{P}^1(\CC)\setminus\{0,1,\infty\}$} is the one described in the introduction. Therefore, we only need to prove that the rational structure on the Betti realization $\mathcal{L}_{n,\B}$ is the one induced by the period matrix \eqref{eqn:matrixpolylog}. It is enough to prove it for the fiber of $\mathcal{L}_{n,\B}$ at some point $z\in \PP^1(\CC)\setminus\{0,1,\infty\}$. We contemplate the following short exact sequence of local systems on $\PP^1(\CC)\setminus \{0,1,\infty\}$, induced by \eqref{eq: short exact sequence polylogarithm},
    $$
    0 \lra \Sym^{n-1}\left(\mathcal{K}_\B^\vee\right) \lra \mathcal{L}_{n,\B}^\vee \lra \H^n_\B(X_n,B_n) \lra 0,
    $$
    where $\H^n_\B(X_n,B_n)$ is a rank $1$ constant local system. We fix some $z\in\CC\setminus [1,+\infty)$, so that the hypercube~$[0,1]^n$ does not intersect the hypersurface $\{zt_1\cdots t_n=1\}$ and hence defines a relative homology class in the fiber of $\mathcal{L}^\vee_{n,\B}$ at $z$, which lifts the canonical basis of $\H_n^\B(X_n,B_n)$. A basis of~ $\mathcal{L}_{n,\B}^\vee$ at $z$ is therefore obtained by adjoining the class of $[0,1]^n$ to a basis of $\Sym^{n-1}(\mathcal{K}_\B^\vee)$ at~$z$. The following lemma shows that in such a basis, the period matrix of $\mathcal{L}_n$ at $z$ is \eqref{eqn:matrixpolylog}, which concludes the proof.
    \end{proof}
    
    \begin{lemma}
    Let $z\in \CC\setminus[1,+\infty)$. We have the identities
    $$\int_{[0,1]^n}\omega_n^{(0)} = 1 \quad \text{and} \quad \int_{[0,1]^n}\omega_n^{(k)} = \Li_k(z) \quad (k=1,\ldots,n).$$
    \end{lemma}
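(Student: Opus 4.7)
The first identity is immediate, since $\omega_n^{(0)} = \d t_1\cdots \d t_n$ integrates to the volume of $[0,1]^n$, which equals $1$. For the second, my plan is to use the generating-function definition \eqref{eq: definition Eulerian} of the Eulerian polynomials to expand the integrand as a power series in $z$, integrate term-by-term, and recognize the resulting series as $\Li_k(z)$.

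More precisely, I would first restrict to $|z|<1$ so that $|zt_1\cdots t_n|<1$ uniformly on $[0,1]^n$. The identity \eqref{eq: definition Eulerian} then rewrites the integrand as
\[
\omega_n^{(k)} = \left(\sum_{j\geq 0} (j+1)^{n-k}\, z^{j+1}\, (t_1\cdots t_n)^j\right) \d t_1\cdots \d t_n,
\]
a uniformly convergent series of continuous functions on the cube. Swapping summation and integration, and using $\int_0^1 t^j\,\d t = 1/(j+1)$ in each of the $n$ variables, the factor $(j+1)^{n-k}$ cancels against the $(j+1)^n$ coming from the product, and after reindexing $m=j+1$ the sum collapses to $\sum_{m\geq 1} z^m/m^k = \Li_k(z)$.

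To pass from $|z|<1$ to arbitrary $z \in \CC\setminus [1,+\infty)$, I would invoke analytic continuation. The right-hand side $\Li_k$ is defined on $\CC\setminus[1,+\infty)$ by its standard analytic continuation. The left-hand side is holomorphic on the same region, because the pole locus $\{zt_1\cdots t_n=1\}$ of the integrand is disjoint from the compact integration domain $[0,1]^n$ precisely when $z\notin [1,+\infty)$ (as $t_1\cdots t_n\in[0,1]$ for $(t_1,\ldots,t_n)\in[0,1]^n$). Since $\CC\setminus[1,+\infty)$ is connected and contains the open unit disk, the identity propagates by the identity principle.

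The computation is a direct series manipulation, so there is no genuine obstacle; the only substantive input is recognizing that the Eulerian-polynomial expansion of the integrand is exactly what produces, after integration, the Dirichlet series defining $\Li_k(z)$.
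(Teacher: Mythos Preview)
Your proof is correct and follows exactly the same approach as the paper: expand the integrand via the Eulerian-polynomial identity \eqref{eq: definition Eulerian}, integrate term-by-term over the cube, and recognize the resulting series as $\Li_k(z)$. You supply additional analytic justification (restricting to $|z|<1$ and then analytically continuing) that the paper leaves implicit, but the mathematical content is identical.
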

    
    \begin{proof}
    The first identity is clear. For the second, we use \eqref{eq: definition Eulerian} to compute
    $$
    \int_{[0,1]^n}\omega_n^{(k)}=\sum_{j\geq 0}(j+1)^{n-k}z^{j+1}\int_{[0,1]^n}(t_1\cdots t_n)^j\d t_1\cdots \d t_n = \sum_{j\geq 0}\frac{z^{j+1}}{(j+1)^k} = \Li_k(z).\eqno\qedhere
    $$
    \end{proof}

\appendix

\section{Relative cohomology motives}\label{sec: appendix relative cohomology motives}

    In this appendix, we collect some useful facts on relative cohomology motives. All of these results are well known to experts, but we were not able to find a reference where they are presented in a systematic way. We fix a base scheme $S$ which is assumed to be separated and of finite type over a Noetherian base scheme, and we call \emph{variety} a scheme $X$ over $S$ which is separated and of finite type, for which we denote the structure morphism by~$p_X\colon X\rightarrow S$. We use the traditional notation $i_Z^X\colon Z\hookrightarrow X$ and~\hbox{$j_U^X\colon U\hookrightarrow X$} for open and closed immersions, respectively.

     \subsection{Relative cohomology motives}
     
        We consider pairs $(X, Z)$ consisting of a variety $X$ and a closed subvariety $Z$ of $X$. They form a category in which a morphism from $(X_1,Z_1)$ to~$(X_2,Z_2)$ is a morphism of varieties $f\colon X_1\rightarrow X_2$ such that $f(Z_1)\subset Z_2$.
        
        \begin{definition}\label{defi: appendix relative cohomology motive}
          Let $(X,Z)$ be a pair of varieties. The object
          $$\M(X,Z) =\left(p_X\right)_*\left(j_{X\setminus Z}^X\right)_!\QQ_{X\setminus Z}(0) \;\;\in\; \DM(S)$$
          is called the \emph{relative cohomology motive} of $(X,Z)$. When $Z$ is empty, we set 
$$\M(X)=\M(X,\varnothing)=\left(p_X\right)_*\QQ_X(0) \;\;\in\; \DM(S)$$ 
        and simply call it the \emph{cohomology motive} of $X$.
        \end{definition}
        
        We warn the reader that in the literature such notation is often used for relative \emph{homology} motives instead. By a slight abuse, shifts of $\M(X,Z)$ will also be called relative cohomology motives.
        
        \begin{remark}
        For $S=\Spec(\CC)$, the Betti realization of the relative cohomology motive $\M(X,Z)$ is a complex that computes the relative singular cohomology groups $\H^\bullet(X,Z)=\H^\bullet(X(\CC), Z(\CC); \QQ)$. 
        \end{remark}

        \begin{proposition}\label{prop: appendix relative cohomology motives functorial}
        Relative cohomology motives yield a contravariant functor $(X,Z)\mapsto \M(X,Z)$ from the category of pairs of varieties to $\DM(S)$.
        \end{proposition}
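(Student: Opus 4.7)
The plan is to produce, for each morphism $f\colon (X_1,Z_1)\to (X_2,Z_2)$ of pairs, a canonical map $f^*\colon \M(X_2,Z_2)\to \M(X_1,Z_1)$ in $\DM(S)$ by combining base change, the counit of a $(j_!,j^*)$ adjunction, and the adjunction $(f^*,f_*)$. Let me set up notation: write $U_i = X_i\setminus Z_i$, and let $U_1' = X_1\setminus f^{-1}(Z_2)$, which is an open subscheme of $U_1$ because the hypothesis $f(Z_1)\subset Z_2$ is equivalent to $Z_1\subset f^{-1}(Z_2)$. The morphism $f$ restricts to $f'\colon U_1'\to U_2$, and we have the Cartesian square
\[
\diagram{
U_1' \ar[r]^-{f'} \ar[d]_-{j_{U_1'}^{X_1}} & U_2 \ar[d]^-{j_{U_2}^{X_2}} \\
X_1 \ar[r]^-{f} & X_2\rlap{.}
}
\]

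The construction of $f^*$ proceeds in three steps. First, the base change isomorphism $f^*(j_{U_2}^{X_2})_!\simeq (j_{U_1'}^{X_1})_!(f')^*$ applied to $\QQ_{U_2}(0)$ yields $f^*(j_{U_2}^{X_2})_!\QQ_{U_2}(0)\simeq (j_{U_1'}^{X_1})_!\QQ_{U_1'}(0)$. Second, factoring $j_{U_1'}^{X_1}$ as the composition $U_1'\hookrightarrow U_1\hookrightarrow X_1$ and applying $(j_{U_1}^{X_1})_!$ to the counit of the open immersion $\iota\colon U_1'\hookrightarrow U_1$ gives a natural morphism $(j_{U_1'}^{X_1})_!\QQ_{U_1'}(0)\to (j_{U_1}^{X_1})_!\QQ_{U_1}(0)$ in $\DM(X_1)$. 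Third, composing these and adjoining via $(f^*,f_*)$ produces a morphism $(j_{U_2}^{X_2})_!\QQ_{U_2}(0)\to f_*(j_{U_1}^{X_1})_!\QQ_{U_1}(0)$ in $\DM(X_2)$, to which one applies $(p_{X_2})_*$ and uses the equality $p_{X_1}=p_{X_2}\circ f$ to obtain the desired morphism $f^*\colon \M(X_2,Z_2)\to \M(X_1,Z_1)$ in $\DM(S)$.

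The functoriality check amounts to verifying that $\id^* = \id$ and that $(g\circ f)^* = f^*\circ g^*$ for composable $f\colon (X_1,Z_1)\to (X_2,Z_2)$ and $g\colon (X_2,Z_2)\to (X_3,Z_3)$. Both follow from standard compatibilities in the six-functor formalism: the identity case is immediate since all relevant base change and adjunction morphisms are identities; for the composition, one stacks the Cartesian squares for $f$ and $g$ and uses the pseudo-functoriality of $(-)_!$ together with the compatibility of base change with composition, as well as the fact that open immersions satisfy $(j_1\circ j_2)_! \simeq (j_1)_!(j_2)_!$ for the counits coming from inclusions of opens.

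The main obstacle is purely bookkeeping: making sure that the various compatibility isomorphisms (base change, $(-)_!$ being a pseudofunctor on open immersions, naturality of the $(f^*,f_*)$ adjunction) assemble coherently, which is standard for any formalism of motivic sheaves satisfying the six operations; see~\cite{ayoubPhD1,ayoubPhD2,cisinskideglise}. Once these are invoked, the verification of the cocycle condition for $(-)^*$ is a diagram chase, with no new ingredient beyond what is already encoded in the six-functor formalism.
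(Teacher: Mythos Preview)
Your proposal is correct and follows essentially the same route as the paper: the paper first packages the construction as a morphism of endofunctors $\Phi_f\colon (j_{X_2\setminus Z_2}^{X_2})_!(j_{X_2\setminus Z_2}^{X_2})^! \to f_*(j_{X_1\setminus Z_1}^{X_1})_!(j_{X_1\setminus Z_1}^{X_1})^!f^*$ (Lemma~\ref{lem: appendix relative cohomology motives functorial}) and then evaluates at $\QQ_S(0)$, whereas you work directly with the constant object, but the ingredients---base change for the Cartesian square over $f$, the counit $\iota_!\iota^*\to\id$ for the inclusion $U_1'\hookrightarrow U_1$, and the $(f^*,f_*)$ adjunction---are identical. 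The paper likewise leaves the composition identity $\Phi_{g\circ f}=(g_*\Phi_f g^*)\circ\Phi_g$ as ``a tedious but instructive exercise in the six-functor formalism,'' matching your treatment of $(g\circ f)^*=f^*\circ g^*$.
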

        
        In particular, we have morphisms $\M(X,Z_2)\to \M(X,Z_1)$ for closed subvarieties $Z_1\subset Z_2\subset X$. We will first need to prove a general lemma.
        
        \begin{lemma}\label{lem: appendix relative cohomology motives functorial}
        Let $(X_i,Z_i)$ be pairs as above for $i=1,2,3$.
        \begin{enumerate}
        \item\label{l:arcmf-1} Let $f\colon X_1\rightarrow X_2$ be a morphism such that $f(Z_1)\subset Z_2$. Then we have a morphism of endofunctors of\, $\DM(X_2)$: 
        $$\Phi_f\colon \left(j_{X_2\setminus Z_2}^{X_2}\right)_!\left(j_{X_2\setminus Z_2}^{X_2}\right)^! \longrightarrow f_*\left(j_{X_1\setminus Z_1}^{X_1}\right)_!\left(j_{X_1\setminus Z_1}^{X_1}\right)^!f^*.$$
        \item\label{l:arcmf-2} Let $g\colon X_2\rightarrow X_3$ be another morphism such that $g(Z_2)\subset Z_3$. Then we have an equality of morphisms of endofunctors of\, $\DM(X_3)$: 
        $$\Phi_{g\circ f} = \left(g_*\Phi_fg^*\right)\circ \Phi_g.$$
        \end{enumerate}
        \end{lemma}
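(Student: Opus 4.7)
The plan is to build $\Phi_f$ out of three standard ingredients: the base change $f^*(j_2)_! \simeq (j_{W})_! (f|_W)^*$ for a Cartesian square of an open immersion and an arbitrary morphism, the counit of the $(j^*,j_*)$-type adjunctions, and the unit of the $(f^*,f_*)$-adjunction. Write $U_i = X_i \setminus Z_i$ and $j_i = j_{U_i}^{X_i}$. The hypothesis $f(Z_1) \subset Z_2$ is equivalent to $W := f^{-1}(U_2) \subset U_1$, so we get a factorization $W \xhookrightarrow{k} U_1 \xhookrightarrow{j_1} X_1$, and a Cartesian square with $j_W^{X_1} = j_1 \circ k$ on the top and $j_2$ on the bottom, connected by $f$ and $f|_W$.

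For part (\ref{l:arcmf-1}), I would proceed as follows. First, applying the $(f^*,f_*)$-unit to $(j_2)_!(j_2)^!\mathcal{F}$ yields a natural map $(j_2)_!(j_2)^!\mathcal{F} \to f_*f^*(j_2)_!(j_2)^!\mathcal{F}$. Second, the base change isomorphism $f^*(j_2)_! \simeq (j_W^{X_1})_!(f|_W)^*$ for the Cartesian square above, together with the trivial identity $(j_W^{X_1})^! f^* = (f|_W)^*(j_2)^!$ (which is just $(j_2 f|_W)^* = (f j_W^{X_1})^*$ since all the immersions are open and $j^! = j^*$), identifies $f^*(j_2)_!(j_2)^! \simeq (j_W^{X_1})_!(j_W^{X_1})^! f^*$. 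Third, writing $(j_W^{X_1})_!(j_W^{X_1})^! = (j_1)_! k_! k^! (j_1)^!$ and applying the $(k_!,k^!)$-counit gives a morphism $(j_W^{X_1})_!(j_W^{X_1})^! \to (j_1)_!(j_1)^!$. Composing these three natural transformations and applying $f_*$ to the last one defines~$\Phi_f$.

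For part (\ref{l:arcmf-2}), I would unwind both sides and check step by step that the composed natural transformations agree. The key coherence facts are: (a) base change isomorphisms are compatible with composition of morphisms, i.e.\ the iterated base change for $g \circ f$ along $j_3$ factors through the base change along $j_3$ followed by the base change along $j_2$; (b) the $(f^*,f_*)$-unit for $g \circ f$ factors as the $(g^*,g_*)$-unit followed by $g_*$ applied to the $(f^*,f_*)$-unit; (c) for the nested open immersions $W_{13} := (g\circ f)^{-1}(U_3) \subset W_{12} := f^{-1}(U_2) \cap (g\circ f)^{-1}(U_3)^{\mathrm{promoted}} \subset U_1$ (where $W_{12}$ is the analogue of $W$ built for the pair $(X_1, f^{-1}(Z_2) \cup (gf)^{-1}(Z_3))$ etc.), the counits of the adjunctions composed give the counit for the composition. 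Each of these compatibilities is a standard axiom in the six-functor formalism \cite{ayoubPhD1,ayoubPhD2,cisinskideglise}.

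The main obstacle is purely bookkeeping: organizing the diagram so that the three base-change and adjunction squares all fit together without sign or coherence ambiguity. There is no real mathematical difficulty, because every step is an instance of one of the standard compatibilities of the six-functor formalism; however, the commutative diagram witnessing part (\ref{l:arcmf-2}) is large and I would draw it as a pasting of five or six small squares, one for each base change or unit/counit involved, and verify that each subsquare commutes by the appropriate axiom. The verification is formal but tedious, and would occupy the bulk of the written proof.
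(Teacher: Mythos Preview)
Your construction of $\Phi_f$ in part~(\ref{l:arcmf-1}) is exactly the paper's: adjoint transpose via the $(f^*,f_*)$-unit, base change isomorphism $f^*(j_2)_!(j_2)^!\simeq (j_W^{X_1})_!(j_W^{X_1})^!f^*$ for $W=f^{-1}(U_2)$, and then the counit $(j_1)_!k_!k^!(j_1)^!\to (j_1)_!(j_1)^!$. For part~(\ref{l:arcmf-2}) the paper says only that it is ``a tedious but instructive exercise in the six-functor formalism that we encourage the reader to solve by themselves,'' so your outline of the coherence checks (composition of units, pasting of base change squares, composition of counits) is already more detailed than what the paper provides; the garbled description of the nested opens in your point~(c) should simply read $W_{12}=f^{-1}(U_2)\subset W_{13}=(g\circ f)^{-1}(U_3)\subset U_1$.
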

        
        \begin{proof}
        \eqref{l:arcmf-1}~ By the adjunction $(f^*,f_*)$, defining $\Phi_f$ is equivalent to defining a morphism
        $$f^*\left(j_{X_2\setminus Z_2}^{X_2}\right)_!\left(j_{X_2\setminus Z_2}^{X_2}\right)^! \longrightarrow \left(j_{X_1\setminus Z_1}^{X_1}\right)_!\left(j_{X_1\setminus Z_1}^{X_1}\right)^!f^*.$$
        Let us set $U_1=f^{-1}(X_2\setminus Z_2)$. By using base change and $j^!=j^*$, we have an isomorphism
        $$f^*\left(j_{X_2\setminus Z_2}^{X_2}\right)_!\left(j_{X_2\setminus Z_2}^{X_2}\right)^! \simeq \left(j_{U_1}^{X_1}\right)_!\left(j_{U_1}^{X_1}\right)^!f^*.$$
        Since $f(Z_1)\subset Z_2$, we have $U_1\subset X_1\setminus Z_1$, and we derive a morphism of functors
$$\left(j_{U_1}^{X_1}\right)_!\left(j_{U_1}^{X_1}\right)^!\simeq \left(j_{X_1\setminus Z_1}^{X_1}\right)_!\left(j_{U_1}^{X_1\setminus Z_1}\right)_!\left(j_{U_1}^{X_1\setminus Z_1}\right)^!\left(j_{X_1\setminus Z_1}^{X_1}\right)^! \rightarrow \left(j_{X_1\setminus Z_1}^{X_1}\right)_!\left(j_{X_1\setminus Z_1}^{X_1}\right)^!,$$
        and combining all this gives the desired morphism.\
        
        \eqref{l:arcmf-2}~ This is a tedious but instructive exercise in the six-functor formalism that we encourage the reader to solve by themselves.
        \end{proof}
        
        \begin{proof}[Proof of Proposition~\ref{prop: appendix relative cohomology motives functorial}]
        We have by definition $\M(X,Z)=\left(p_X\right)_*\left(j_{X\setminus Z}^X\right)_!\left(j_{X\setminus Z}^X\right)^!\left(p_X\right)^*\QQ_S(0)$. For a morphism of pairs $f\colon (X_1,Z_1)\rightarrow (X_2,Z_2)$, we set $\M(f)=\left(p_{X_2}\right)_*\Phi_f\left(p_{X_2}\right)^*\QQ_S(0)$, which is a morphism from $\M(X_2,Z_2)$ to $\M(X_1,Z_1)$, where $\Phi_f$ was defined in part~\eqref{l:arcmf-1} of Lemma~\ref{lem: appendix relative cohomology motives functorial}. Part~\eqref{l:arcmf-2}  of that lemma then implies the equality $\M(g\circ f) = \M(f)\circ \M(g)$. 
        \end{proof}

    \subsection{Partial boundary morphisms}
    
        \begin{proposition}\label{prop: appendix boundary triangle}
        Let $X$ be a variety, let $Y$ and $Z'$ be closed subvarieties of\, $X$, and set $Z=Z'\cup Y$. There is a distinguished triangle in $\DM(S)$: 
        \begin{equation}\label{eq: appendix boundary triangle}
        \M(Y,Y\cap Z')[-1] \longrightarrow \M(X,Z) \longrightarrow \M(X,Z') \stackrel{+1}{\longrightarrow}.
        \end{equation}
        \end{proposition}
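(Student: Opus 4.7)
The plan is to obtain the triangle \eqref{eq: appendix boundary triangle} as the image under the pushforward $(p_X)_*$ of the localization triangle associated with a complementary open/closed pair sitting inside $U' := X \setminus Z'$. Since $Z = Z' \cup Y$, the open subvariety $U := X \setminus Z$ is the complement in $U'$ of the closed subvariety $V := Y \cap U' = Y \setminus (Y \cap Z')$. Writing $j \colon U \hookrightarrow U'$ and $i \colon V \hookrightarrow U'$ for the resulting complementary immersions, the six-functor formalism supplies a distinguished triangle
$$j_! j^* \QQ_{U'}(0) \longrightarrow \QQ_{U'}(0) \longrightarrow i_* i^* \QQ_{U'}(0) \stackrel{+1}{\longrightarrow}$$
in $\DM(U')$, which is the fundamental input of the argument.

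Next, I would push this triangle forward to $\DM(S)$. Denote by $k \colon U' \hookrightarrow X$ the open immersion complementary to $Z'$, and by $m \colon Y \hookrightarrow X$ the closed immersion of $Y$. Applying $(p_X)_* \circ k_!$ produces a distinguished triangle whose first term is $(p_X)_* (j^X_{X \setminus Z})_! \QQ_{X \setminus Z}(0) = \M(X, Z)$, using the equality $k \circ j = j^X_{X \setminus Z}$, and whose middle term is $\M(X, Z')$ by the very definition of the relative cohomology motive. The third term requires a small identification: the composition $k \circ i \colon V \hookrightarrow X$ factors as $m \circ j^Y_V$, where $j^Y_V \colon V \hookrightarrow Y$ is the open complement of $Y \cap Z'$ inside $Y$. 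Combining the equalities $i_* = i_!$ and $m_* = m_!$ for closed immersions with the functoriality of $(-)_!$, we obtain
$$k_! i_* \QQ_V(0) \simeq m_* (j^Y_V)_! \QQ_V(0),$$
and consequently $(p_X)_* k_! i_* \QQ_V(0) \simeq (p_Y)_* (j^Y_V)_! \QQ_V(0) = \M(Y, Y \cap Z')$.

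Putting everything together yields a distinguished triangle
$$\M(X, Z) \longrightarrow \M(X, Z') \longrightarrow \M(Y, Y \cap Z') \stackrel{+1}{\longrightarrow}$$
in $\DM(S)$, and rotating it one step to the left gives exactly \eqref{eq: appendix boundary triangle}. There is no substantial obstacle beyond the localization triangle itself and bookkeeping with the six functors; the only point that deserves a moment of care is keeping track of the factorization $k \circ i = m \circ j^Y_V$, which ensures that the third vertex is identified with $\M(Y, Y \cap Z')$ rather than with some locally closed variant. It is also worth noting, though not strictly necessary for the statement, that under this construction the shifted connecting morphism $\M(Y, Y \cap Z')[-1] \to \M(X, Z)$ is precisely the partial boundary morphism along $Y$ referred to in Definition~\ref{defi: partial boundary morphism}.
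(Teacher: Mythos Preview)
Your proof is correct and follows essentially the same approach as the paper: both start from the localization triangle for the pair $(j_{X\setminus Z}^{X\setminus Z'},\, i_{Y\setminus Y\cap Z'}^{X\setminus Z'})$, extend along $j_{X\setminus Z'}^{X}$ by $(-)_!$, and push forward along~$p_X$. The only cosmetic differences are that the paper uses the rotated form $i_*i^*[-1]\to j_!j^!\to 1$ of the localization triangle (so no rotation is needed at the end) and packages the argument as a triangle of endofunctors of $\DM(X)$ in a preliminary lemma; this extra generality is then reused to prove functoriality of partial boundary morphisms, but for the bare statement of the proposition your direct argument suffices.
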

        
        We first need to prove a general lemma.
        
        \begin{lemma}\label{lem: appendix boundary triangle general}
        Let $X$ be a variety, and let $Y$, $Z$, and $Z'$ be closed subvarieties of\, $X$ with $Z=Z'\cup Y$. There is a distinguished triangle of endofunctors of\, $\DM(X)$: 
        \begin{equation*}
        \left(i_Y^X\right)_*\left(j_{Y\setminus Y\cap Z'}^{Y}\right)_!\left(j_{Y\setminus Y\cap Z'}^{Y}\right)^!\left(i_Y^X\right)^*[-1] \longrightarrow \left(j_{X\setminus Z}^X\right)_!\left(j_{X\setminus Z}^X\right)^! \longrightarrow \left(j_{X\setminus Z'}^X\right)_!\left(j_{X\setminus Z'}^X\right)^!\stackrel{+1}{\longrightarrow}.
        \end{equation*}
        \end{lemma}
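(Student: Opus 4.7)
The plan is to realize this triangle as the image, under the operation $F\mapsto j'_!\,F\,j'^!$ with $j'=j_{X\setminus Z'}^X$, of the standard open/closed localization triangle on $\DM(X\setminus Z')$. The key observation is that inside $X\setminus Z'$ one has an open/closed decomposition into the open piece $X\setminus Z$ and the closed piece $Y\setminus Y\cap Z'$: indeed, as subsets of $X$,
\[
(X\setminus Z')\setminus(X\setminus Z)\;=\;Z\setminus Z'\;=\;(Z'\cup Y)\setminus Z'\;=\;Y\setminus(Y\cap Z').
\]
Denoting by $k\colon X\setminus Z\hookrightarrow X\setminus Z'$ and $l\colon Y\setminus Y\cap Z'\hookrightarrow X\setminus Z'$ the resulting open and closed immersions, the equality $j_{X\setminus Z}^X=j'\circ k$ holds, and the square
\[
\diagram{
Y\setminus Y\cap Z' \ar[r]^-{l} \ar[d]_-{j_{Y\setminus Y\cap Z'}^{Y}} & X\setminus Z' \ar[d]^-{j'}\\
Y \ar[r]_-{i_Y^X} & X
}
\]
is Cartesian, since $(i_Y^X)^{-1}(X\setminus Z')=Y\setminus Y\cap Z'$.

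Next I would invoke the standard localization triangle of endofunctors of $\DM(X\setminus Z')$ attached to the open/closed pair $(k,l)$,
\[
k_!k^*\longrightarrow \id \longrightarrow l_*l^* \stackrel{+1}{\longrightarrow},
\]
and apply the exact functor $j'_!(-)j'^!$ to it. This produces a distinguished triangle of endofunctors of $\DM(X)$. Using $k^*=k^!$, $l_!=l_*$, $j'^!=j'^*$ (valid since all three maps are open or closed immersions), together with the $\ast$- and $!$-functoriality along compositions, one identifies: the first term $j'_!k_!k^!j'^!\simeq(j_{X\setminus Z}^X)_!(j_{X\setminus Z}^X)^!$ from $j'\circ k=j_{X\setminus Z}^X$; and, via the Cartesian square, the composites $j'\circ l=i_Y^X\circ j_{Y\setminus Y\cap Z'}^Y$ give $j'_!l_*\simeq(i_Y^X)_*(j_{Y\setminus Y\cap Z'}^Y)_!$ and $l^*j'^*\simeq(j_{Y\setminus Y\cap Z'}^Y)^!(i_Y^X)^*$, so the third term becomes $(i_Y^X)_*(j_{Y\setminus Y\cap Z'}^Y)_!(j_{Y\setminus Y\cap Z'}^Y)^!(i_Y^X)^*$. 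A rotation of the resulting triangle then yields the statement.

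The argument is a formal manipulation inside the six-functor formalism, so there is no serious obstacle: the only nontrivial input is the localization triangle itself, a foundational result for $\DM$ (see \cite{ayoubPhD1,cisinskideglise}). The main care required lies in correctly identifying the open/closed decomposition of $X\setminus Z'$ and verifying that the displayed square is Cartesian, so that the base-change/functoriality isomorphisms in the third term can be invoked without ambiguity.
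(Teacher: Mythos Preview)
Your proposal is correct and follows essentially the same approach as the paper: both arguments apply $j'_!(-)j'^!$ with $j'=j_{X\setminus Z'}^X$ to the localization triangle on $\DM(X\setminus Z')$ attached to the open/closed pair $(X\setminus Z,\,Y\setminus Y\cap Z')$, and then identify the closed-support term via the Cartesian square you wrote down (the paper states the resulting isomorphisms $j'_!\,l_*\simeq (i_Y^X)_*(j_{Y\setminus Y\cap Z'}^Y)_!$ and $l^*\,j'^!\simeq (j_{Y\setminus Y\cap Z'}^Y)^!(i_Y^X)^*$ directly). The only cosmetic difference is that the paper starts from the already-rotated form $i_*i^*[-1]\to j_!j^!\to 1\stackrel{+1}{\to}$, whereas you rotate at the end.
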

        
        \begin{proof}
        We consider the distinguished (localization) triangle 
        $i_*i^*[-1]\rightarrow j_!j^! \rightarrow 1 \stackrel{+1}{\rightarrow}$
        for $i=i_{Y\setminus Y\cap Z'}^{X\setminus Z'}$ and $j=j_{X\setminus Z}^{X\setminus Z'}$. Composing it on the left by $\left(j_{X\setminus Z'}^X\right)_!$ and on the right by $\left(j_{X\setminus Z'}^X\right)^!$ gives rise to a distinguished triangle of endofunctors of $\DM(X)$:
        $$\left(j_{X\setminus Z'}^{X\setminus Z}\right)_!\left(i_{Y\setminus Y\cap Z'}^{X\setminus Z'}\right)_*\left(i_{Y\setminus Y\cap Z'}^{X\setminus Z'}\right)^*\left(j_{X\setminus Z'}^{X\setminus Z}\right)^![-1] \longrightarrow \left(j_{X\setminus Z}^X\right)_!\left(j_{X\setminus Z}^X\right)^! \longrightarrow \left(j_{X\setminus Z'}^X\right)_!\left(j_{X\setminus Z'}^X\right)^!\stackrel{+1}{\longrightarrow}.$$
        The result follows from the isomorphisms
        $$\left(j_{X\setminus Z'}^{X\setminus Z}\right)_!\left(i_{Y\setminus Y\cap Z'}^{X\setminus Z'}\right)_* \simeq \left(i_Y^X\right)_*\left(j_{Y\setminus Y\cap Z'}^{Y}\right)_! \quad\text{and}\quad \left(i_{Y\setminus Y\cap Z'}^{X\setminus Z'}\right)^*\left(j_{X\setminus Z'}^{X\setminus Z}\right)^! \simeq \left(j_{Y\setminus Y\cap Z'}^{Y}\right)^!\left(i_Y^X\right)^*.\eqno\qedhere$$
        \end{proof}
        
        \begin{proof}[Proof of Proposition~\ref{prop: appendix boundary triangle}]
          This follows from the distinguished triangle of Lemma~\ref{lem: appendix boundary triangle general} by evaluating the endofunctors at $\QQ_X(0)=\left(p_X\right)^*\QQ_S(0)$ and applying $\left(p_X\right)_*$. 
        \end{proof}
        
        \begin{remark}
        If $Z'$ is empty, then the triangle \eqref{eq: appendix boundary triangle} simply reads 
        $$\M(Z)[-1] \longrightarrow \M(X,Z)\longrightarrow \M(X) \stackrel{+1}{\longrightarrow}.$$
        For $S=\Spec(\mathbb{C})$, its Betti realization gives rise to the long exact sequence in relative singular cohomology for the pair $(X,Z)$:
        \begin{equation}\label{eqn:exactsequencepairs}
        \cdots \longrightarrow \H^{\bullet-1}(Z) \longrightarrow \H^\bullet(X,Z) \longrightarrow \H^\bullet(X) \longrightarrow \cdots
        \end{equation}
        The morphism $\H_\bullet(X,Z)\rightarrow \H_{\bullet-1}(Z)$, which is dual to the morphism appearing in the long exact sequence~\eqref{eqn:exactsequencepairs}, computes the boundary of a relative cycle. In general, the triangle \eqref{eq: appendix boundary triangle} gives rise to a long exact sequence
        $$\cdots \longrightarrow \H^{\bullet-1}(Y,Y\cap Z') \longrightarrow \H^\bullet(X,Z) \longrightarrow \H^\bullet(X,Z') \longrightarrow \cdots\ ,$$
        which can be derived from the long exact sequence in relative cohomology for $Z'\subset Z\subset X$ along with the excision isomorphism, see \cite[Proposition 2.22]{hatcherbook}, 
        $$\H^{\bullet-1}(Z,Z')=\H^{\bullet-1}(Y\cup Z', Z')\simeq \widetilde{\H}^{\bullet-1}(Y\cup Z'/Z') \simeq \widetilde{\H}^{\bullet-1}(Y/Y\cap Z')\simeq \H^{\bullet-1}(Y,Y\cap Z').$$
        The morphism $\H_\bullet(X,Z)\rightarrow \H_{\bullet-1}(Y,Y\cap Z')$, dual to the morphism appearing in that long exact sequence, computes \enquote{the $Y$-component of the boundary of a relative cycle.} This justifies the following terminology.
        \end{remark}
        
        \begin{definition}\label{defi: partial boundary morphism}
        The morphism 
        $$\M(Y,Y\cap Z')[-1] \longrightarrow \M(X,Z)$$ 
        appearing in the triangle \eqref{eq: appendix boundary triangle} is called a \emph{partial boundary morphism} along $Y$.
        \end{definition}
        
        We now prove that partial boundary morphisms are functorial.

        \begin{proposition}\label{prop: appendix functoriality boundary}
        For $i=1,2$, let $X_i, Y_i, Z_i, Z_i'$ be as in Proposition~\ref{prop: appendix boundary triangle}, and let $f\colon X_1\rightarrow X_2$ be a morphism such that $f(Y_1)\subset Y_2$ and  $f(Z'_1)\subset Z'_2$. Then we have the following commutative diagram in~$\DM(S)$, where the horizontal arrows are partial boundary morphisms and the vertical arrows are induced by the functoriality of relative cohomology motives:
        $$\diagram{
        \M(Y_1,Y_1\cap Z'_1)[-1] \ar[r] & \M(X_1,Z_1) \\
        \M(Y_2,Y_2\cap Z'_2)[-1] \ar[u]\ar[r] & \M(X_2,Z_2). \ar[u]
        }$$
        \end{proposition}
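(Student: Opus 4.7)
The plan is to upgrade Lemma~\ref{lem: appendix boundary triangle general} to a naturality statement with respect to morphisms of the form $f\colon X_1\to X_2$ satisfying $f(Y_1)\subset Y_2$ and $f(Z'_1)\subset Z'_2$ (which automatically implies $f(Z_1)\subset Z_2$), and then specialize by applying $(p_{X_2})_*$ and evaluating at $\QQ_S(0)$.

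More concretely, I would first observe that the natural transformation $\Phi_f$ constructed in Lemma~\ref{lem: appendix relative cohomology motives functorial}\eqref{l:arcmf-1} can be defined for each of the pairs $(X_i,Z_i)$ and $(X_i,Z'_i)$, using the hypothesis $f(Z_1)\subset Z_2$ and $f(Z'_1)\subset Z'_2$, respectively; let me denote these two transformations by $\Phi_f^{Z}$ and $\Phi_f^{Z'}$. In the same spirit, a variant of the construction produces a natural transformation
\[
\Psi_f\colon \left(i_{Y_2}^{X_2}\right)_*\left(j_{Y_2\setminus Y_2\cap Z'_2}^{Y_2}\right)_!\left(j_{Y_2\setminus Y_2\cap Z'_2}^{Y_2}\right)^!\left(i_{Y_2}^{X_2}\right)^* \lra f_*\left(i_{Y_1}^{X_1}\right)_*\left(j_{Y_1\setminus Y_1\cap Z'_1}^{Y_1}\right)_!\left(j_{Y_1\setminus Y_1\cap Z'_1}^{Y_1}\right)^!\left(i_{Y_1}^{X_1}\right)^*f^*,
\]
which, once evaluated at $\QQ_{X_2}(0)=(p_{X_2})^*\QQ_S(0)$ and pushed forward by $(p_{X_2})_*$, induces the left-hand vertical map $\M(Y_2,Y_2\cap Z'_2)[-1]\to \M(Y_1,Y_1\cap Z'_1)[-1]$ in the diagram (via the functoriality of relative cohomology motives applied to $f_{|Y_1}\colon Y_1\to Y_2$). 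The commutativity of the diagram is then equivalent to the compatibility of $\Phi_f^Z$ and $\Psi_f$ with the distinguished triangle of Lemma~\ref{lem: appendix boundary triangle general}.

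The second step is to verify this compatibility at the level of endofunctors. Recall that the boundary morphism in Lemma~\ref{lem: appendix boundary triangle general} arises by applying $(j_{X\setminus Z'}^X)_!(-)(j_{X\setminus Z'}^X)^!$ to the localization triangle $i_*i^*[-1]\to j_!j^!\to 1\stackrel{+1}{\to}$ for the complementary pair $(i,j)=(i_{Y\setminus Y\cap Z'}^{X\setminus Z'}, j_{X\setminus Z}^{X\setminus Z'})$, and then rewriting the $i_*i^*$ term through the base change identities displayed at the end of that proof. The localization triangle is a natural transformation of triangles of endofunctors with respect to pullback along any morphism of varieties (indeed it is a piece of the six-functor formalism), and both the $(-)_!$-$(-)^!$ compositions and the base change isomorphisms appearing in the proof are natural in the same sense. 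Unwinding the definition of $\Phi_f$ given in Lemma~\ref{lem: appendix relative cohomology motives functorial}\eqref{l:arcmf-1}, both $\Phi_f^Z$ and $\Psi_f$ are assembled from precisely these building blocks applied to the Cartesian squares governing $f$ and its restrictions to $X_i\setminus Z_i$, $X_i\setminus Z'_i$, $Y_i$, and $Y_i\setminus Y_i\cap Z'_i$; hence they fit into a morphism of distinguished triangles, which after applying $(p_{X_2})_*$ and evaluating at $\QQ_S(0)$ yields precisely the desired commutative square.

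The main obstacle will be the bookkeeping of base change isomorphisms: one has to check that, for each of the inclusions and projections involved, the relevant squares are Cartesian (which they are, because closed immersions and open immersions are stable under pullback, and the hypotheses $f(Y_1)\subset Y_2$ and $f(Z'_1)\subset Z'_2$ guarantee that the preimages of $Y_2\setminus Y_2\cap Z'_2$ and $X_2\setminus Z_2$ along $f$ contain $Y_1\setminus Y_1\cap Z'_1$ and $X_1\setminus Z_1$, respectively), and that the various compatibility axioms (proper base change, the interaction of $j_!$ with composition and with $(-)^!$, etc.) line up. This is the same tedious-but-mechanical exercise in the six-functor formalism that underlies part~\eqref{l:arcmf-2} of Lemma~\ref{lem: appendix relative cohomology motives functorial}, and I would handle it in the same spirit, leaving the detailed diagram chase to the reader.
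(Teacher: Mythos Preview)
Your approach is essentially correct, but the paper takes a slicker route that sidesteps the hardest part of your diagram chase. Rather than directly verifying that the boundary maps of the two localization triangles are intertwined by $\Phi_f^Z$ and $\Psi_f$ (which is precisely the square you want, and which you leave as ``tedious-but-mechanical''), the paper first shows that the \emph{other two} squares in the morphism of triangles commute---these are instances of Lemma~\ref{lem: appendix relative cohomology motives functorial}\eqref{l:arcmf-2} and hence already known. It then invokes \cite[Proposition~1.1.9]{bbdg}: if two of the three squares in a candidate morphism of distinguished triangles commute, the third does as well provided a certain $\Hom$ group vanishes. That vanishing is immediate here from the adjunction $\bigl((j_{X_2\setminus Z_2}^{X_2})_!,(j_{X_2\setminus Z_2}^{X_2})^!\bigr)$ together with $(j_{X_2\setminus Z_2}^{X_2})^!(i_{Y_2}^{X_2})_*=0$, since $Y_2\subset Z_2$.

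What each approach buys: your direct verification is more self-contained and makes the compatibility explicit, but the step you hand-wave (naturality of the boundary map in the localization triangle under the $\Phi_f$ construction) is genuinely the most intricate part and would require several pages of coherence checks if written out in full. The paper's trick replaces that entire verification by a one-line $\Hom$ vanishing, at the cost of importing an abstract triangulated-category lemma.
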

        
        We first need to prove a general lemma.
    
        \begin{lemma}\label{lem: appendix general lemma boundary functorial}
        In the setting of Proposition~\ref{prop: appendix functoriality boundary}, let us write $j_i=j_{X_i\setminus Z_i}^{X_i}$ and $j'_i=j_{X_i\setminus Z'_i}^{X_i}$ for $i=1,2$. There is a morphism
        $$\diagram{
f_*\left(i_{Y_1}^{X_1}\right)_*\left(j_{Y_1\setminus Y_1\cap Z'_1}^{Y_1}\right)_! \left(j_{Y_1\setminus Y_1\cap Z'_1}^{Y_1}\right)^! \left(i_{Y_1}^{X_1}\right)^*f^* [-1] \ar[r]& f_*\left(j_1\right)_!\left(j_1\right)^!f^* \ar[r]& f_*\left(j'_1\right)_!\left(j'_1\right)^!f^* \ar[r]^-{+1}& \\
\left(i_{Y_2}^{X_2}\right)_*\left(j_{Y_2\setminus Y_2\cap Z'_2}^{Y_2}\right)_!\left(j_{Y_2\setminus Y_2\cap Z'_2}^{Y_2}\right)^!\left(i_{Y_2}^{X_2}\right)^* [-1] \ar[u]\ar[r]& \left(j_2\right)_!\left(j_2\right)^! \ar[u]\ar[r]& \left(j'_2\right)_!\left(j'_2\right)^! \ar[u]\ar[r]^-{+1}&
        }$$ 
        between the distinguished triangles from Lemma~\ref{lem: appendix boundary triangle general}, in which the vertical arrows are induced by the maps $\Phi_f$ from part~\eqref{l:arcmf-1} of Lemma~\ref{lem: appendix relative cohomology motives functorial}.
        \end{lemma}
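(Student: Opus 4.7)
The plan is to build the three vertical arrows from $\Phi_f$ (from part (1) of Lemma \ref{lem: appendix relative cohomology motives functorial}) and then verify that the resulting diagram is a morphism of distinguished triangles. The middle vertical arrow is $\Phi_f$ attached to the pair-morphism $f\colon (X_1,Z_1)\to (X_2,Z_2)$, and the right vertical arrow is $\Phi_f$ attached to $f\colon (X_1,Z'_1)\to (X_2,Z'_2)$. For the left arrow, I would use the identifications $f\circ i_{Y_1}^{X_1}=i_{Y_2}^{X_2}\circ f|_{Y_1}$ and its dual to rewrite the top-left functor as
\[
(i_{Y_2}^{X_2})_*(f|_{Y_1})_*(j_{Y_1\setminus Y_1\cap Z'_1}^{Y_1})_!(j_{Y_1\setminus Y_1\cap Z'_1}^{Y_1})^!(f|_{Y_1})^*(i_{Y_2}^{X_2})^*[-1],
\]
and then take the left arrow to be $\Phi_{f|_{Y_1}}$ (for the pair-morphism $(Y_1,Y_1\cap Z'_1)\to (Y_2,Y_2\cap Z'_2)$) framed by $(i_{Y_2}^{X_2})_*$ on the left and $(i_{Y_2}^{X_2})^*$ on the right.

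Next, I would unfold the construction of both triangles as in the proof of Lemma \ref{lem: appendix boundary triangle general}. The bottom row is obtained by applying $(j_{X_2\setminus Z'_2}^{X_2})_!(-)(j_{X_2\setminus Z'_2}^{X_2})^!$ to the localization triangle
\[
i_*i^*[-1] \longrightarrow j_!j^! \longrightarrow \id \stackrel{+1}{\longrightarrow}
\]
on $X_2\setminus Z'_2$ for the open immersion $j_{X_2\setminus Z_2}^{X_2\setminus Z'_2}$ and the complementary closed immersion $i_{Y_2\setminus Y_2\cap Z'_2}^{X_2\setminus Z'_2}$, and the top row admits a parallel description after pre- and post-composing with $f_*$ and $f^*$. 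Restricting $f$ to $\tilde{f}\colon X_1\setminus Z'_1\to X_2\setminus Z'_2$ (permissible because $f(Z'_1)\subset Z'_2$), I would observe that each $\Phi_f$ appearing in the diagram factors through the standard base-change natural transformations for $\tilde{f}$ and for its restriction to $Y_1\setminus Y_1\cap Z'_1\to Y_2\setminus Y_2\cap Z'_2$. The commutativity of the middle and right squares then reduces to the naturality of these base-change isomorphisms, and the left square reduces, after the identifications above, to the analogous statement on $Y$.

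The main obstacle is compatibility with the $+1$ connecting morphism of the localization triangle: one must show that the base-change transformation associated to the Cartesian square formed by $\tilde{f}$ and the two immersions commutes with the boundary. In the six-functor formalism, this is a standard consequence of the fact that the localization triangle is the unit-counit sequence of the adjunctions $(i^*,i_*)$ and $(j_!,j^!)$, both of which satisfy Beck--Chevalley with respect to such Cartesian squares. Once this compatibility is granted, the morphism of distinguished triangles is obtained by applying the ambient functors $(j_{X_2\setminus Z'_2}^{X_2})_!(-)(j_{X_2\setminus Z'_2}^{X_2})^!$, and the lemma follows.
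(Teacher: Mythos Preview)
Your approach is workable but takes a genuinely different route from the paper's, and it is worth seeing the comparison.

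The paper does not verify all three squares by unpacking base-change and Beck--Chevalley. Instead, it makes the key observation that the horizontal arrows in the triangle are themselves instances of the $\Phi$ construction: the map $(j_2)_!(j_2)^!\to (j'_2)_!(j'_2)^!$ is $\Phi_{\id_{X_2}}$ for the pair-morphism $(X_2,Z'_2)\to (X_2,Z_2)$, and the $+1$ arrow $(j'_2)_!(j'_2)^!\to (i_{Y_2}^{X_2})_*\cdots (i_{Y_2}^{X_2})^*$ is $\Phi_{i_{Y_2}^{X_2}}$ for the pair-morphism $(Y_2,Y_2\cap Z'_2)\to (X_2,Z'_2)$ (and similarly on $X_1$). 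Commutativity of the middle and the $+1$ squares then follows immediately from the composition law of Lemma~\ref{lem: appendix relative cohomology motives functorial}\eqref{l:arcmf-2}, applied to the factorizations $f = f\circ \id_{X_1}=\id_{X_2}\circ f$ and $f\circ i_{Y_1}^{X_1}=i_{Y_2}^{X_2}\circ f|_{Y_1}$. For the remaining left square, the paper invokes \cite[Proposition~1.1.9]{bbdg} together with the vanishing
\[
\Hom\!\Big((j_2)_!(j_2)^!,\, f_*(i_{Y_1}^{X_1})_*(j_{Y_1\setminus Y_1\cap Z'_1}^{Y_1})_!(j_{Y_1\setminus Y_1\cap Z'_1}^{Y_1})^!(i_{Y_1}^{X_1})^*f^*[-1]\Big)=0,
\]
which follows from adjunction and $(j_2)^!(i_{Y_2}^{X_2})_*=0$. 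This avoids any direct analysis of the map $A\to B$.

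Your approach instead attempts to verify every square by hand. This can be made to work, but the phrase ``the left square reduces\ldots to the analogous statement on $Y$'' is the point where you would need to be most careful: the $A\to B$ arrow is the boundary map of the localization triangle, and its compatibility with $\Phi_f$ does not reduce as cleanly to a statement on $Y$ as you suggest. (Also, note that a morphism of triangles has only three squares; your text seems to count a separate ``right square'' and a ``$+1$'' square.) The paper's trick---recognizing the horizontal maps as $\Phi$'s and then using the Hom-vanishing to get the last square for free---is both shorter and sidesteps exactly the square that is hardest to check directly.
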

        
        \begin{proof}
        The above diagram is composed of three squares, the third one having horizontal arrows marked~$+1$. The second and third squares commute as special cases of Lemma~\ref{lem: appendix relative cohomology motives functorial}\eqref{l:arcmf-2}. By \cite[Proposition 1.1.9]{bbdg}, this implies that the first square commutes since we have
        $$\Hom\left(\left(j_{X_2\setminus Z_2}^{X_2}\right)_!\left(j_{X_2\setminus Z_2}^{X_2}\right)^!, f_*\left(i_{Y_1}^{X_1}\right)_*\left(j_{Y_1\setminus Y_1\cap Z'_1}^{Y_1}\right)_! \left(j_{Y_1\setminus Y_1\cap Z'_1}^{Y_1}\right)^! \left(i_{Y_1}^{X_1}\right)^*f^*[-1] \right)=0.$$
        This vanishing comes from the adjunction $\left(\left(j_{X_2\setminus Z_2}^{X_2}\right)_!, \left(j_{X_2\setminus Z_2}^{X_2}\right)^!\right)$ and the vanishing
        $$\left(j_{X_2\setminus Z_2}^{X_2}\right)^!f_*\left(i_{Y_1}^{X_1}\right)_* = \left(j_{X_2\setminus Z_2}^{X_2}\right)^! \left(i_{Y_2}^{X_2}\right)_*\left(f_{Y_1}^{Y_2}\right)_*=0.\eqno\qedhere$$
        \end{proof}
        
        \begin{proof}[Proof of Proposition~\ref{prop: appendix functoriality boundary}]
          This follows from the first commutative square of Lemma~\ref{lem: appendix general lemma boundary functorial} by evaluating at~$\QQ_{X_2}(0)=\left(p_{X_2}\right)^*\QQ_S(0)$ and applying $\left(p_{X_2}\right)_*$. 
        \end{proof}
        
    \subsection{The K\"{u}nneth formula}
    
        The category of pairs of varieties is endowed with the product
        \begin{displaymath}
        (X_1,Z_1) \times (X_2,Z_2)=(X_1\times X_2, (Z_1\times X_2) \cup (X_1\times Z_2)), 
        \end{displaymath}
        where all products are implicitly taken over the base scheme $S$. We will need a K\"{u}nneth formula for relative cohomology motives, which holds in great generality over a field but not over a general base (see Remark~\ref{rem: kunneth fails} below). We therefore state a very particular case that will be sufficient for our purposes.
        
        \begin{proposition}\label{prop: appendix kunneth}
        Let $(X_1,D_1)$ and $(X_2,D_2)$ be two pairs consisting of a smooth variety and a strict normal crossing divisor. There is a functorial isomorphism in $\DM(S)$: 
        \[
        \M(X_1,D_1)\otimes \M(X_2,D_2) \stackrel{\lowsim}{\longrightarrow} \M(X_1\times X_2, (D_1\times X_2) \cup (X_1\times D_2)). 
        \]
        \end{proposition}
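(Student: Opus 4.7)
The plan is to construct a natural Künneth comparison morphism and then establish that it is an isomorphism by induction on the total number of irreducible components of $D_1$ and~$D_2$. Set $U_i=X_i\setminus D_i$ and $U = U_1\times U_2$, so that $U=(X_1\times X_2)\setminus D$ with $D=(D_1\times X_2)\cup (X_1\times D_2)$. Smooth base change applied to the Cartesian squares formed by the projections $\pi_i\colon X_1\times X_2\to X_i$ gives isomorphisms $\pi_i^*\,(j_{U_i}^{X_i})_!\QQ_{U_i}(0)\simeq (j^{X_1\times X_2}_{U_i\times X_{3-i}})_!\QQ(0)$, and their tensor product is identified with $j_!\QQ_U(0)$ for $j\colon U\hookrightarrow X_1\times X_2$. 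Composing with the lax monoidal structure on $(p_{X_1\times X_2})_* = (p_{X_i})_*(\pi_i)_*$ yields the desired map $\M(X_1,D_1)\otimes \M(X_2,D_2)\to \M(X_1\times X_2,D)$.

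For the base case $D_1=D_2=\varnothing$, the statement reduces to the Künneth isomorphism $\M(X_1)\otimes \M(X_2)\simeq \M(X_1\times X_2)$ for smooth varieties over~$S$. This is a standard consequence of the six-functor formalism developed by Ayoub and Cisinski--Déglise for $\DM$: it follows from the projection formula for smooth morphisms together with smooth base change applied to the square expressing $X_1\times X_2$ as a fibre product over~$S$.

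For the inductive step, suppose $D_1=D_1'\cup Y$ with $Y$ an irreducible component. Since $D_1$ is SNC on the smooth variety $X_1$, the subvariety $Y$ is smooth and $Y\cap D_1'$ is a strict normal crossing divisor on $Y$; the number of components of $(Y\cap D_1')+D_2$ and of $D_1'+D_2$ is strictly smaller than that of $D_1+D_2$. Let $D'=(D_1'\times X_2)\cup (X_1\times D_2)$, so that $D=D'\cup (Y\times X_2)$ and $(Y\times X_2)\cap D' = ((Y\cap D_1')\times X_2)\cup (Y\times D_2)$. Proposition~\ref{prop: appendix boundary triangle} produces distinguished triangles for both $(X_1,D_1',D_1)$ and $(X_1\times X_2,D',D)$, and the functoriality of partial boundary morphisms (Proposition~\ref{prop: appendix functoriality boundary}) combined with the naturality of the Künneth map gives a morphism between them. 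By the induction hypothesis, the outer two comparison maps are isomorphisms, so the triangulated five-lemma forces the middle one to be an isomorphism as well.

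The main obstacle is the base case: establishing the Künneth isomorphism for motives of smooth varieties over the general Noetherian base $S$ requires invoking the smooth projection formula from the six-functor formalism, which is the deepest input to the proof. Everything else is a routine, though notationally heavy, diagram chase in the six-functor calculus—one must only be careful that the Künneth map constructed above is genuinely natural in the pair $(X_i,D_i)$ and commutes with the boundary triangles, which follows from the general formalism developed in Appendix~\ref{sec: appendix relative cohomology motives}.
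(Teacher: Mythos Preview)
Your argument is correct and follows essentially the same route as the paper's. Both proofs construct the K\"unneth comparison map via the external tensor product and then reduce the isomorphism claim, by means of localization/boundary triangles, to the base case of smooth varieties over~$S$, where it is a consequence of the six-functor formalism (the paper cites \cite[Lemma 2.1.8 and Proposition 2.1.20]{Kunneth} at that point). The only difference is organizational: the paper phrases the reduction as ``$\left(j_{X_i\setminus D_i}^{X_i}\right)_!\QQ(0)$ lies in the triangulated subcategory generated by $\left(i_{Y_i}^{X_i}\right)_*\QQ_{Y_i}(0)$ for $Y_i$ a smooth stratum of $D_i$,'' whereas you run an explicit induction on the number of irreducible components using the partial boundary triangle of Proposition~\ref{prop: appendix boundary triangle}. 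The compatibility you need between the K\"unneth map and those triangles is exactly Proposition~\ref{prop: appendix kunneth boundary}, which the paper states separately and also leaves as a routine verification.
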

                    
        \begin{proof}
        We denote by $\boxtimes\colon\DM(X_1)\times \DM(X_2)\rightarrow \DM(X_1\times X_2)$ the external tensor product defined as 
        \[
        \mathcal{F}_1\boxtimes\mathcal{F}_2=\pi_1^*\mathcal{F}_1\otimes \pi_2^*\mathcal{F}_2,
        \] with $\pi_i\colon X_1\times X_2\rightarrow X_i$ the two projections. We can write 
        $$\QQ_{X_1\times X_2\setminus (D_1\times X_2\cup X_1\times D_2)}(0)=\QQ_{X_1\setminus D_1\times X_2\setminus D_2}(0)  \simeq \QQ_{X_1\setminus D_1}(0)\boxtimes \QQ_{X_2\setminus D_2}(0).$$
        Now the base change isomorphism and the projection formula imply, as in \cite[Lemma 2.2.3]{Kunneth}, that we have 
        $$\left(j_{X_1\setminus D_1 \times X_2\setminus D_2}^{X_1\times X_2}\right)_!\QQ_{X_1\setminus D_1\times X_2\setminus D_2}(0)  \simeq \left(j_{X_1\setminus D_1}^{X_1}\right)_!\QQ_{X_1\setminus D_1}(0) \boxtimes \left(j_{X_2\setminus D_2}^{X_2}\right)_!\QQ_{X_2\setminus D_2}(0).$$
        For objects $\mathcal{F}_i\in \DM(X_i)$, we have a natural morphism 
        \begin{equation}\label{eq: appendix kunneth morphism}
        \left(p_{X_1}\right)_*\mathcal{F}_1\otimes \left(p_{X_2}\right)_*\mathcal{F}_2 \to \left(p_{X_1\times X_2}\right)_*(\mathcal{F}_1\boxtimes \mathcal{F}_2);
        \end{equation}
        see \cite[Section~2.1.19]{Kunneth}. 
        It is an isomorphism if $\mathcal{F}_i=\left(i_{Y_i}^{X_i}\right)_*\QQ_{Y_i}(0)$ for $Y_i\subset X_i$ a closed subvariety that is smooth over~$S$, by~\cite[Lemma 2.1.8 and Proposition 2.1.20]{Kunneth}. By a simple inclusion-exclusion argument using the localization triangles, one sees that $\left(j_{X_i\setminus D_i}^{X_i}\right)_!\QQ_{X_i\setminus D_i}(0)$ is in the triangulated subcategory of $\DM(X_i)$ generated by such $\mathcal{F}_i$, with $Y_i$ an intersection of certain irreducible components of $D_i$. Therefore, \eqref{eq: appendix kunneth morphism} is an isomorphism for $\mathcal{F}_i=\left(j_{X_i\setminus D_i}^{X_i}\right)_!\QQ_{X_i\setminus D_i}(0)$, and the claim follows.
        \end{proof}
        
        \begin{remark}\label{rem: kunneth fails}
        Here is a counterexample to the general K\"{u}nneth formula over a base. Let $S=\AA^1$ be the affine line over some field, and consider $X_1=\AA^1\setminus\{0\}$ and $X_2=\{0\}$ viewed as varieties over~$S$ with structure morphisms $j\colon\AA^1\setminus\{0\} \hookrightarrow \AA^1$ and $i\colon\{0\}\hookrightarrow \AA^1$. The (fiber) product $X_1\times X_2$ is empty, and therefore~$\M(X_1\times X_2)=0$, in contrast with the non-zero 
        $$\M(X_1)\otimes \M(X_2) = j_*\QQ_{\AA^1\setminus\{0\}}(0)\otimes i_*\QQ_{\{0\}}(0) \simeq i_*i^*j_*\QQ_{\AA^1\setminus\{0\}}(0).$$
        \end{remark}
        
        We have the following compatibility between the K\"{u}nneth formula and partial boundary morphisms, whose proof is left as an exercise to the reader. 
        
        \begin{proposition}\label{prop: appendix kunneth boundary}
        Let $(X_1,D_1)$ and $(X_2,D_2)$ be two pairs consisting of a smooth variety and a strict normal crossing divisor. Let us write $D_1=C_1\cup D'_1$, where $C_1$ is an irreducible component of\,~$D_1$ and~$D'_1$ is the union of the remaining irreducible components. Then we have a commutative diagram in~$\DM(S)$ where the horizontal arrows are the K\"{u}nneth isomorphisms from Proposition~\ref{prop: appendix kunneth} and the vertical arrows are partial boundary morphisms along $C_1$ and $C_1\times X_2$, respectively:
        $$\diagram{
        \M(X_1,D_1)\otimes \M(X_2,D_2) \ar[r]^-{\sim} & \M(X_1\times X_2,D_1\times X_2\cup X_1\times D_2) \\
        \M(C_1,C_1\cap D'_1)\,[1]\otimes \M(X_2,D_2) \ar[u]\ar[r]^-{\sim}& \M(C_1\times X_2,(C_1\cap D'_1)\times X_2\cup C_1\times D_2)\,[1]. \ar[u]
        }$$
        \end{proposition}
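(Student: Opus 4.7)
The plan is to deduce the commutative square from a corresponding morphism of endofunctors on $\DM(X_1\times X_2)$, produced by external tensor product with $(j_{X_2\setminus D_2}^{X_2})_!\QQ_{X_2\setminus D_2}(0)$, followed by an application of the K\"unneth morphism of equation~\eqref{eq: appendix kunneth morphism}. Recall that by Lemma~\ref{lem: appendix boundary triangle general}, the partial boundary morphism along $C_1$ is obtained by applying $(p_{X_1})_*$ to the connecting morphism
$$
\partial_1\colon \left(i_{C_1}^{X_1}\right)_*\left(j_{C_1\setminus C_1\cap D_1'}^{C_1}\right)_!\left(j_{C_1\setminus C_1\cap D_1'}^{C_1}\right)^!\left(i_{C_1}^{X_1}\right)^*[-1] \longrightarrow \left(j_{X_1\setminus D_1}^{X_1}\right)_!\left(j_{X_1\setminus D_1}^{X_1}\right)^!
$$
evaluated at $\QQ_{X_1}(0)$, and the right vertical arrow likewise comes from the analogous connecting morphism $\partial_{12}$ on $X_1\times X_2$ for the closed subvariety $C_1\times X_2$.

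The first step is to identify $\partial_1\bigl(\QQ_{X_1}(0)\bigr) \boxtimes (j_{X_2\setminus D_2}^{X_2})_!\QQ_{X_2\setminus D_2}(0)$ with $\partial_{12}\bigl(\QQ_{X_1\times X_2}(0)\bigr)$ in $\DM(X_1\times X_2)$. The target-term identification is the K\"unneth-for-open-complement isomorphism
$$
\left(j_{X_1\setminus D_1}^{X_1}\right)_!\QQ_{X_1\setminus D_1}(0)\boxtimes \left(j_{X_2\setminus D_2}^{X_2}\right)_!\QQ_{X_2\setminus D_2}(0) \;\simeq\; \left(j_{X_1\times X_2\setminus D_{12}}^{X_1\times X_2}\right)_!\QQ_{X_1\times X_2\setminus D_{12}}(0),
$$
with $D_{12}=D_1\times X_2\cup X_1\times D_2$, already used in the proof of Proposition~\ref{prop: appendix kunneth}. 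The source-term identification follows from base change for the Cartesian square formed by $i_{C_1}^{X_1}$ and $\pi_1\colon X_1\times X_2\to X_1$, together with the analogous K\"unneth-for-open-complement identity on $C_1\times X_2$ (whose smoothness and normal-crossing hypotheses are inherited from those on $(X_1,D_1)$). Under these identifications, the two connecting morphisms agree since both arise as the canonical boundary of the same open--closed localization triangle, which is compatible with base change.

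The second step is to apply $(p_{X_1\times X_2})_*$ and compose with the natural K\"unneth morphism~\eqref{eq: appendix kunneth morphism}, whose naturality in its first argument turns the isomorphism of Step~1 into the desired commutative square in $\DM(S)$. By the inclusion--exclusion argument at the end of the proof of Proposition~\ref{prop: appendix kunneth}, the K\"unneth morphism is an isomorphism at each of the four corners of this square, which identifies it with the one in the statement. The main technical point lies in Step~1, specifically the base-change identification of the closed-stratum endofunctor $(i_{C_1}^{X_1})_*(j_{C_1\setminus C_1\cap D_1'}^{C_1})_!(j_{C_1\setminus C_1\cap D_1'}^{C_1})^!(i_{C_1}^{X_1})^* \boxtimes (j_{X_2\setminus D_2}^{X_2})_!$ with its product analogue; this is a bookkeeping exercise involving repeated applications of base change and the projection formula across Cartesian squares, entirely in the spirit of the proof of Proposition~\ref{prop: appendix kunneth}, so no essentially new ingredient is required.
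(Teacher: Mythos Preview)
Your approach is correct and is a natural way to carry out the exercise. The paper does not actually give a proof of this proposition: it states that the compatibility ``is left as an exercise to the reader,'' so there is nothing to compare your argument against beyond checking that it works.

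One small point worth sharpening: in Step~1 you assert that ``the two connecting morphisms agree since both arise as the canonical boundary of the same open--closed localization triangle, which is compatible with base change.'' This is true, but the cleanest way to see it is to note that the functor $\pi_1^*(-)\otimes \pi_2^*\mathcal{G}_2$ (with $\mathcal{G}_2=(j_{X_2\setminus D_2}^{X_2})_!\QQ_{X_2\setminus D_2}(0)$) is triangulated and sends the localization triangle for $C_1\subset X_1$ to a distinguished triangle on $X_1\times X_2$; the base change isomorphism $\pi_1^*(i_{C_1})_*\simeq (i_{C_1\times X_2})_*(\pi_1')^*$ and the projection formula then identify this triangle, \emph{as a triangle}, with the localization triangle for $C_1\times X_2\subset X_1\times X_2$, since the counit $j_!j^!\to 1$ and unit $1\to i_*i^*$ are themselves natural under base change. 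This pins down the connecting morphism without ambiguity. Everything else in your write-up is fine.
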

        
    \subsection{Relative cohomology motives associated to triples}
    
        We now consider \emph{triples} $(X,A,B)$ consisting of a variety $X$ and two closed subvarieties $A,B\subset X$. In this setting, the relative cohomology motive of the pair~$(X\setminus A,B\setminus A\cap B)$ has a description that is more symmetric in $A$ and~$B$.
        
        \begin{proposition}\label{prop: appendix triple motive}
        For a triple $(X,A,B)$, we have an isomorphism in $\DM(S)$: 
        $$\M(X\setminus A,B\setminus A\cap B) \simeq \left(p_X\right)_* \left(j_{X\setminus A}^X\right)_*\left(j_{X\setminus A}^X\right)^*\left(j_{X\setminus B}^X\right)_!\left(j_{X\setminus B}^X\right)^!\QQ_X(0).$$
        \end{proposition}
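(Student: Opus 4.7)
The plan is to unwind the definition of the left-hand side, factor the structural morphism, and then invoke base change together with the fact that $j^! = j^*$ for an open immersion to recognize the right-hand side.

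First I would observe that setting $U = X\setminus A$ and $V = U\setminus (B\setminus A\cap B)$, we have $V = X\setminus(A\cup B)$. By Definition~\ref{defi: appendix relative cohomology motive} applied to the pair $(U, B\setminus A\cap B)$,
\[
\M(X\setminus A,\, B\setminus A\cap B) \;=\; (p_U)_*\, \bigl(j_{V}^{U}\bigr)_!\, \QQ_V(0).
\]
Since $p_U = p_X \circ j_{X\setminus A}^X$, the pushforward $(p_U)_*$ factors as $(p_X)_*\, (j_{X\setminus A}^X)_*$. Thus the left-hand side becomes
\[
(p_X)_*\, (j_{X\setminus A}^X)_*\, (j_{X\setminus(A\cup B)}^{X\setminus A})_!\, \QQ_{X\setminus(A\cup B)}(0).
\]

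Next I would reduce matters to the identification
\begin{equation}\label{eq: key id triple motive}
(j_{X\setminus(A\cup B)}^{X\setminus A})_!\, \QQ_{X\setminus(A\cup B)}(0) \;\simeq\; (j_{X\setminus A}^X)^*\, (j_{X\setminus B}^X)_!\, (j_{X\setminus B}^X)^!\, \QQ_X(0).
\]
This uses two inputs from the six-functor formalism. On the one hand, since $j_{X\setminus B}^X$ is an open immersion, $(j_{X\setminus B}^X)^! = (j_{X\setminus B}^X)^*$, and $(j_{X\setminus B}^X)^*\QQ_X(0) = \QQ_{X\setminus B}(0)$, so the right-hand side of \eqref{eq: key id triple motive} simplifies to $(j_{X\setminus A}^X)^*(j_{X\setminus B}^X)_!\QQ_{X\setminus B}(0)$. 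On the other hand, the square
\[
\diagram{
X\setminus(A\cup B)\ar@{^(->}[r]\ar@{^(->}[d] & X\setminus A\ar@{^(->}[d] \\
X\setminus B\ar@{^(->}[r] & X
}
\]
is Cartesian, so base change yields an isomorphism $(j_{X\setminus A}^X)^*(j_{X\setminus B}^X)_! \simeq (j_{X\setminus(A\cup B)}^{X\setminus A})_!(j_{X\setminus(A\cup B)}^{X\setminus B})^*$. Applying this to $\QQ_{X\setminus B}(0)$ gives exactly $(j_{X\setminus(A\cup B)}^{X\setminus A})_!\, \QQ_{X\setminus(A\cup B)}(0)$, establishing \eqref{eq: key id triple motive}. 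Applying $(p_X)_*(j_{X\setminus A}^X)_*$ to both sides then yields the claimed isomorphism.

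None of the steps is substantive: the only potential obstacle is keeping track of which base change identity one is invoking. The argument rests entirely on the fact that for an open immersion $j$ one has $j^! = j^*$, and on the usual proper-base-change isomorphism $f^* g_! \simeq g'_! f'^*$ for a Cartesian square of open immersions, both of which hold in $\DM$ in the generality assumed.
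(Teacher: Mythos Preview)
Your proof is correct and follows essentially the same route as the paper: factor $(p_{X\setminus A})_*$ as $(p_X)_*(j_{X\setminus A}^X)_*$, then use base change along the Cartesian square of open immersions (together with $j^!=j^*$) to identify $(j_{X\setminus(A\cup B)}^{X\setminus A})_!\QQ_{X\setminus(A\cup B)}(0)$ with $(j_{X\setminus A}^X)^*(j_{X\setminus B}^X)_!(j_{X\setminus B}^X)^!\QQ_X(0)$. The paper's proof is simply a terser version of yours.
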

        
        \begin{proof}
        This follows from the isomorphism $\left(p_{X\setminus A}\right)_*\simeq \left(p_X\right)_* \left(j_{X\setminus A}^X\right)_*$ and base change:
        $$\left(j_{X\setminus A\cup B}^{X\setminus A}\right)_!\QQ_{X\setminus A\cup B}(0) \simeq \left(j_{X\setminus A\cup B}^{X\setminus A}\right)_!\left(j_{X\setminus A\cup B}^{X\setminus B}\right)^*\left(j_{X\setminus B}^X\right)^!\QQ_X(0) \simeq  \left(j_{X\setminus A}^X\right)^*\left(j_{X\setminus B}^X\right)_! \left(j_{X\setminus B}^X\right)^!\QQ_X(0).\eqno\qedhere$$
        \end{proof}
        
        Even more symmetry is gained if we make an extra geometric assumption.
        
        \begin{definition}
        We say that the triple $(X,A,B)$ is \emph{locally of product type} if, \'{e}tale locally on $X$, it is isomorphic to a triple $(X_1\times X_2, Z_1\times X_2,X_1\times Z_2)$ for pairs $(X_1,Z_1)$ and $(X_2,Z_2)$.
        \end{definition}
        
        If $A$ and $B$ are unions of irreducible components of a normal crossing divisor $D\subset X$ such that no component of $D$ is in both $A$ and $B$, then $(X,A,B)$ is locally of product type.
    
        \begin{proposition}\label{prop: appendix triple commute}
        Let $(X,A,B)$ be a triple that is locally of product type, with $X$ smooth. Then we have an isomorphism in $\DM(X)$: 
        $$\left(j_{X\setminus B}^X\right)_!\left(j_{X\setminus A\cup B}^{X\setminus B}\right)_*\QQ_{X\setminus A\cup B}(0) \stackrel{\lowsimeq}{\longrightarrow} \left(j_{X\setminus A}^X\right)_*\left(j_{X\setminus A\cup B}^{X\setminus A}\right)_!\QQ_{X\setminus A\cup B}(0),$$
        and an isomorphism in $\DM(S)$: 
        $$\M(X\setminus A,B\setminus A\cap B)\simeq \left(p_X\right)_* \left(j_{X\setminus B}^X\right)_!\left(j_{X\setminus B}^X\right)^!\left(j_{X\setminus A}^X\right)_*\left(j_{X\setminus A}^X\right)^*\QQ_X(0).$$
        \end{proposition}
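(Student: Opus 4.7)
The second isomorphism follows from the first together with Proposition~\ref{prop: appendix triple motive}. Starting from the expression for $\M(X\setminus A,B\setminus A\cap B)$ given by that proposition, one uses the equality $j^!=j^*$ for open immersions, the fact that $(j_{X\setminus B}^X)^*\QQ_X(0)=\QQ_{X\setminus B}(0)$ by smoothness of $X$, and the base change isomorphism for the Cartesian square
\[
\xymatrix{
X\setminus(A\cup B) \ar@{^(->}[r] \ar@{^(->}[d] & X\setminus B \ar@{^(->}[d] \\
X\setminus A \ar@{^(->}[r] & X
}
\]
to reduce both right-hand sides to the common intermediate expression $(p_X)_*(j_{X\setminus A}^X)_*(j_{X\setminus A\cup B}^{X\setminus A})_!\QQ_{X\setminus A\cup B}(0)$. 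It thus suffices to establish the first isomorphism.

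For the first isomorphism, I would begin by constructing a canonical comparison morphism in $\DM(X)$ from the left-hand side to the right-hand side. The map is obtained by applying $(j_{X\setminus A}^X)^*$ to the left-hand side, invoking the above base change to rewrite it as $(j_{X\setminus A\cup B}^{X\setminus A})_!(j_{X\setminus A\cup B}^{X\setminus B})^*(j_{X\setminus A\cup B}^{X\setminus B})_*\QQ_{X\setminus A\cup B}(0)$, applying the counit of the corresponding $(j^*,j_*)$ adjunction, and then passing back through the $((j_{X\setminus A}^X)^*,(j_{X\setminus A}^X)_*)$ adjunction to land on the right-hand side.

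To verify this map is an isomorphism, I would exploit that the statement is local on $X$ for the étale topology, together with étale descent for $\DM$. By the locally-of-product-type hypothesis, one may thus reduce to $(X,A,B)=(X_1\times X_2, Z_1\times X_2, X_1\times Z_2)$. Writing $j_i=j_{X_i\setminus Z_i}^{X_i}$ and working in the same spirit as the proof of Proposition~\ref{prop: appendix kunneth}, a repeated use of base change and the projection formula identifies both sides with the single external tensor product $(j_1)_*\QQ_{X_1\setminus Z_1}(0) \boxtimes (j_2)_!\QQ_{X_2\setminus Z_2}(0)$, and the canonical comparison map corresponds under these identifications to the identity on this object.

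The main obstacle is essentially bookkeeping: tracking the canonical comparison map through the composition of adjunction units and counits with the various base change isomorphisms, carefully enough to confirm that in the product situation it really is the identity on the external tensor product and not some nontrivial automorphism. Once this diagrammatic coherence is in place, the argument is formal.
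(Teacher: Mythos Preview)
Your proposal is correct and follows essentially the same strategy as the paper: construct a canonical comparison map via adjunction and base change, then use \'{e}tale descent and the locally-of-product-type hypothesis to reduce to a product situation where both sides identify with the external tensor product $(j_1)_*\QQ_{X_1\setminus Z_1}(0)\boxtimes (j_2)_!\QQ_{X_2\setminus Z_2}(0)$. The only cosmetic difference is that the paper defines the comparison map using the $(j_!,j^!)$ adjunction for $j_{X\setminus B}^X$ rather than the $(j^*,j_*)$ adjunction for $j_{X\setminus A}^X$, and the paper is in fact slightly less explicit than you are about the coherence check that the comparison map becomes the identity under the external tensor product identifications.
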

        
        \begin{proof}
        The second isomorphism follows from the first and base change as in the proof of Proposition~\ref{prop: appendix triple motive}. The first morphism corresponds by adjunction to the base change isomorphism 
        $$\left(j_{X\setminus A\cup B}^{X\setminus B}\right)_* \longrightarrow \left(j_{X\setminus A\cup B}^{X\setminus B}\right)_*\left(j_{X\setminus A\cup B}^{X\setminus A}\right)^!\left(j_{X\setminus A\cup B}^{X\setminus A}\right)_! \simeq  \left(j_{X\setminus B}^X\right)^!\left(j_{X\setminus A}^X\right)_*\left(j_{X\setminus A\cup B}^{X\setminus A}\right)_!. $$
        If the triple $(X,A,B)$ is of the type $(X_1\times X_2, Z_1\times X_2, X_1\times Z_2)$ for pairs $(X_1,Z_1)$ and $(X_2,Z_2)$, with~$X_1$ and~$X_2$ smooth, then the first morphism is an isomorphism because both sides are isomorphic to the external tensor product $\left(j_{X_1\setminus Z_1}^{X_1}\right)_*\QQ_{X_1\setminus Z_1}(0)\boxtimes\left(j_{X_2\setminus Z_2}^{X_2}\right)_!\QQ_{X_2\setminus Z_2}(0)$, by the same kind of reasoning as in the proof of Proposition~\ref{prop: appendix kunneth}. The proposition follows by \'{e}tale descent. 
        \end{proof}
        
        \begin{remark}
        Proposition~\ref{prop: appendix triple commute} implies, if $X$ is smooth and proper of dimension $n$ and $(X,A,B)$ is a triple that is locally of product type, that we have Poincar\'{e}--Verdier duality
        \begin{equation}\label{eq: verdier duality rel}
        \mathbb{D}_S\,\M(X\setminus A,B\setminus A\cap B) \simeq \M(X\setminus B,A\setminus A\cap B)\,(n)\,[2n]
        \end{equation}
        whenever $\mathbb{D}_S$ is well  defined.
        \end{remark}
        
    \subsection{Residue morphisms}
        
        \begin{proposition}\label{prop: appendix residue}
        Let $(X,A,B)$ be a triple that is locally of product type, with $X$ smooth and $A$ smooth of pure codimension $c$ in $X$. We have a distinguished triangle in $\DM(S)$: 
        \begin{equation}\label{eq: appendix residue}
        \M(X,B)\longrightarrow \M(X\setminus A,B\setminus A\cap B) \longrightarrow \M(A,A\cap B)\,(-c)\,[-2c+1] \stackrel{+1}{\longrightarrow}.
        \end{equation}
        \end{proposition}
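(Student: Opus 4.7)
The plan is to combine the localization distinguished triangle for the smooth closed immersion $i := i_A^X$ with open complement $j := j_{X \setminus A}^X$ with absolute purity. Viewed as a triangle of endofunctors of $\DM(X)$, localization gives
\[
i_* i^! \longrightarrow \mathrm{id} \longrightarrow j_* j^* \stackrel{+1}{\longrightarrow}.
\]
Evaluating this at $F := k_! k^! \QQ_X(0)$, where $k := j_{X \setminus B}^X$, and applying $(p_X)_*$, one immediately identifies the middle term with $\M(X, B)$ by definition, and the right-hand term $(p_X)_* j_* j^* k_! k^! \QQ_X(0)$ with $\M(X \setminus A, B \setminus A \cap B)$ via Proposition~\ref{prop: appendix triple motive}.

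The substance of the proof is identifying the leftmost term $(p_X)_* i_* i^! k_! k^! \QQ_X(0)$ with $\M(A, A \cap B)(-c)[-2c]$. Absolute purity, which applies since $A$ is smooth of pure codimension $c$ inside the smooth variety $X$, gives $i^! \QQ_X(0) \simeq \QQ_A(-c)[-2c]$. One still needs to commute $i^!$ past $k_! k^!$. The plan is to invoke the base change $i^! k_! \simeq k'_! i'^!$ coming from the Cartesian square with $k' \colon A \setminus (A \cap B) \hookrightarrow A$ an open immersion and $i' \colon A \setminus (A \cap B) \hookrightarrow X \setminus B$ a closed immersion, combined with the functorial identity $i'^! k^! = (k \circ i')^! = (i \circ k')^! = k'^! i^!$. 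Chaining these isomorphisms produces
\[
i^! k_! k^! \QQ_X(0) \simeq k'_! k'^! \QQ_A(-c)[-2c],
\]
and applying $i_*$ followed by $(p_X)_*$, using $(p_X)_* i_* \simeq (p_A)_*$, converts the leftmost term into $\M(A, A \cap B)(-c)[-2c]$, as required.

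Assembling these three identifications yields a distinguished triangle $\M(A, A \cap B)(-c)[-2c] \to \M(X, B) \to \M(X \setminus A, B \setminus A \cap B) \stackrel{+1}{\to}$ in $\DM(S)$, which rotates to the one in the statement. The main technical obstacle is verifying the base change $i^! k_! \simeq k'_! i'^!$ at the level of the six-functor formalism; once this and absolute purity are in hand, the rest of the argument is a routine unraveling of definitions. The locally-of-product-type hypothesis on $(X,A,B)$ enters through the compatibility statements between the various functors encapsulated in Proposition~\ref{prop: appendix triple motive} and is harmless otherwise.
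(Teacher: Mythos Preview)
Your approach is essentially the same as the paper's: both combine a localization triangle with absolute purity. The only cosmetic difference is that you localize on $X$ for the pair $(A,X\setminus A)$ and evaluate at $k_!k^!\QQ_X(0)$, whereas the paper localizes on $X\setminus B$ for the pair $(A\setminus A\cap B,\, X\setminus A\cup B)$ and then applies $(p_X)_*(j_{X\setminus B}^X)_!$, invoking Proposition~\ref{prop: appendix triple commute} rather than Proposition~\ref{prop: appendix triple motive} to identify the middle term.

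There is, however, a confusion in your last two sentences. Proposition~\ref{prop: appendix triple motive} holds for \emph{any} triple and does not use the locally-of-product-type hypothesis; that is not where the assumption enters. Rather, the hypothesis is needed precisely for your base change $i^!k_! \simeq k'_!i'^!$, which is \emph{not} a general identity in the six-functor formalism (the standard exchanges are $f^*g_!\simeq g'_!f'^*$ and $f^!g_*\simeq g'_*f'^!$, neither of which is yours). Comparing the localization triangle on $X$ applied to $k_!\QQ_{X\setminus B}(0)$ with $k_!$ applied to the localization triangle on $X\setminus B$, one sees that your isomorphism reduces to the swap
\[
(j_{X\setminus B}^X)_!\,(j_{X\setminus A\cup B}^{X\setminus B})_*\,\QQ_{X\setminus A\cup B}(0)\;\simeq\; (j_{X\setminus A}^X)_*\,(j_{X\setminus A\cup B}^{X\setminus A})_!\,\QQ_{X\setminus A\cup B}(0),
\]
which is exactly the first isomorphism of Proposition~\ref{prop: appendix triple commute} and requires the product-type assumption. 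So your ``main technical obstacle'' and the hypothesis are one and the same; once you cite Proposition~\ref{prop: appendix triple commute} for that step, your argument is complete and matches the paper's.
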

        
        \begin{proof}
        By using the distinguished triangle $1\rightarrow j_*j^*\rightarrow i_!i^![1]\stackrel{+1}{\rightarrow}$ for $i=i_{A\setminus A\cap B}^{X\setminus B}$ and $j=j_{X\setminus A\cup B}^{X\setminus B}$, applying $\left(p_X\right)_*\left(j_{X\setminus B}^X\right)_!$, and using Proposition~\ref{prop: appendix triple commute}, we get a distinguished triangle
        \begin{displaymath}
         \M(X,B) \longrightarrow \M(X\setminus A,B\setminus A\cap B) \longrightarrow \left(p_X\right)_*\left(j_{X\setminus B}^X\right)_!\left(i_{A\setminus A\cap B}^{X\setminus B}\right)_!\left(i_{A\setminus A\cap B}^{X\setminus B}\right)^!\QQ_{X\setminus B}(0)[1] \stackrel{+1}{\longrightarrow}.
        \end{displaymath}
        By purity, we have an isomorphism
        $$\left(i_{A\setminus A\cap B}^{X\setminus B}\right)^!\QQ_{X\setminus B}(0) \simeq \QQ_{A\setminus A\cap B}(-c)\,[-2c],$$
        and the rightmost term of the above triangle is isomorphic to 
$$\left(p_X\right)_*\left(i_A^X\right)_!\left(j_{A\setminus A\cap B}^A\right)_!\QQ_{A\setminus A\cap B}\,(-c)\,[-2c+1]\simeq \M(A,A\cap B)\,(-c)\,[-2c+1].$$
        The proposition follows. 
        \end{proof}
        
        \begin{definition}\label{defi: appendix residue}
        For a triple $(X,A,B)$  that is locally of product type, with $X$ smooth and $A$ smooth of pure codimension $c$ in $X$, the morphism 
        $$\M(X\setminus A,B\setminus A\cap B) \longrightarrow \M(A,A\cap B)\,(-c)\,[-2c+1]$$
        appearing in the triangle \eqref{eq: appendix residue} is called a \emph{residue morphism} along $A$.
        \end{definition}
        
        \begin{remark}
        Under the duality \eqref{eq: verdier duality rel}, the residue morphism along $A$ is exchanged with the boundary morphism
        $$\M(A\setminus A\cap B)[-1]\longrightarrow \M(X\setminus B,A\setminus A\cap B).$$
        There are also \emph{partial} residue morphisms, which we do not need.
        \end{remark}
        
        \begin{remark}
        The name ``residue morphism'' comes from the codimension $c=1$ case, in which (assuming, for simplicity, that $B$ is empty) the de Rham realization of the residue morphism \hbox{$\H^\bullet(X\setminus A)\to \H^{\bullet-1}(A)$} can be computed by the standard residue of logarithmic $1$-forms; see \cite{griffithsharris}.
        \end{remark}
        
        \begin{proposition}\label{prop:appendixresidueboundary}
        Let $(X,A,Z)$ be a triple that is locally of product type, with $X$ smooth and $A$ smooth of pure codimension $c$ in $X$. Let us fix a decomposition $Z=Z'\cup Y$ with $Y$ smooth. Then we have a commutative diagram in $\DM(S)$, where the horizontal arrows are residue morphisms and the vertical arrows are partial boundary morphisms:
        $$\diagram{
        \M(Y\setminus A\cap Y , Y\cap Z'\setminus A\cap Y\cap Z')[-1] \ar[d] \ar[r] & \M(A\cap Y, A\cap Y\cap Z')(-c)[-2c] \ar[d]\\
        \M(X\setminus A,Z\setminus A\cap Z) \ar[r] & \M(A,A\cap Z)(-c)[-2c+1].
        }$$
        \end{proposition}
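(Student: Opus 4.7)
The plan is to exploit the hypothesis that the triple is locally of product type and reduce the commutativity to the bifunctoriality of the tensor product. Both the residue morphism and the partial boundary morphism are constructed naturally from the six-functor formalism (via the localization-plus-purity triangle for $i_A^X$ in the proof of Proposition~\ref{prop: appendix residue}, and via the localization triangle for $i_Y^X$ of Lemma~\ref{lem: appendix boundary triangle general}, respectively). Hence the commutativity is an \'{e}tale-local statement on $X$.

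\'{E}tale-locally, the hypothesis that $(X,A,Z)$ is of product type gives $(X,A,Y\cup Z')\simeq (X_1\times X_2,\, Z_1\times X_2,\, X_1\times Z_2)$; since $Y$ is smooth and a union of irreducible components of $Z$, one may further arrange that $Y = X_1\times Y_2$ and $Z' = X_1\times Z'_2$ with $Z_2 = Y_2\cup Z'_2$. Applying the K\"unneth formula of Proposition~\ref{prop: appendix kunneth}, each of the four corner motives of the square decomposes as an external tensor product with an $X_1$\nobreakdash-factor (either $\M(X_1\setminus Z_1)$ or $\M(Z_1)$, shifted and twisted appropriately) and an $X_2$\nobreakdash-factor (either $\M(X_2, Z_2)$ or $\M(Y_2, Y_2\cap Z'_2)$). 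Under these identifications, the residue morphism along $A = Z_1\times X_2$ acts as $\Res_{Z_1}\otimes \id$, purely in the first tensor factor, while the partial boundary along $Y = X_1\times Y_2$ acts as $\id\otimes \partial_{Y_2}$, purely in the second (using the symmetric version of Proposition~\ref{prop: appendix kunneth boundary} for the boundary). The square then commutes by the bifunctoriality of~$\otimes$.

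The main obstacle is establishing the K\"unneth compatibility of the residue morphism, i.e., its identification with residue in the first tensor factor under the K\"unneth decomposition. This is the analogue of Proposition~\ref{prop: appendix kunneth boundary} for residues instead of boundaries, which is not explicitly stated in the appendix. It can be obtained by tracing through the localization\nobreakdash-plus\nobreakdash-purity construction of the residue morphism in the proof of Proposition~\ref{prop: appendix residue}, and verifying that each step (the localization triangle for $i_A^X$, the base change isomorphisms, and the purity isomorphism) respects the external product structure — the last point using that $Z_1\times X_2$ has the same codimension in $X_1\times X_2$ as $Z_1$ in $X_1$. With this compatibility in hand, \'{e}tale descent combined with Proposition~\ref{prop: appendix kunneth boundary} reduces the proof to the elementary fact that morphisms acting on different tensor factors commute.
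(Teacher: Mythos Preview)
The paper does not actually prove this: after checking that $(Y, A\cap Y, Y\cap Z')$ is locally of product type and $A\cap Y$ has the right codimension (so the upper residue is defined), it leaves the commutativity ``as an exercise to the reader''. Your proposal is therefore already more than what the paper offers, but as a proof of the proposition \emph{as stated} it has a gap.

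The issue is the step where you ``further arrange that $Y = X_1\times Y_2$ and $Z' = X_1\times Z'_2$''. The hypothesis is only that the triple $(X, A, Z)$ is locally of product type; the decomposition $Z = Y\cup Z'$ is extra data that need not respect the \'etale-local product structure. You also assert that $Y$ is a union of irreducible components of $Z$, which is nowhere assumed --- the statement only says $Y$ is smooth. Your K\"unneth reduction therefore proves the special case where the quadruple $(X,A,Y,Z')$ is locally a product in the evident sense. This does cover the single application in the paper (the proof of Theorem~\ref{thm: short exact sequence L inductive}, where $A_n\cup B_n$ is a normal crossing divisor and $Y_n$ is one of its components), but not the proposition in the generality stated.

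The more robust route, and presumably the intended exercise, is a direct six-functor argument in the style of Lemma~\ref{lem: appendix general lemma boundary functorial}: lift the square to $\DM(X)$ and exhibit it as one face of a morphism between the two residue triangles of Proposition~\ref{prop: appendix residue}, the one for $(Y, A\cap Y, Y\cap Z')$ mapping to the one for $(X, A, Z)$ via the partial-boundary maps of Lemma~\ref{lem: appendix boundary triangle general}. Since the residue triangle is assembled from the localization triangle for the immersion of $A$ together with the purity isomorphism, and both are natural with respect to the closed immersion $i_Y^X$ (and the induced $i_{A\cap Y}^A$), the required squares commute without ever invoking a product splitting of $Z$. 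This avoids both your unjustified assumption on $Y$ and the need to establish a separate K\"unneth compatibility for residues.
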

        
        \begin{proof}
        The assumptions imply that $(Y,A\cap Y,Y\cap Z')$ is locally of product type and that $A\cap Y$ is smooth of codimension $c$ in $Y$; therefore, the upper residue morphism is well defined. The commutativity of the diagram is left as an exercise to the reader.
        \end{proof}
        
\section{Motives of configuration spaces with coefficients}\label{sec: appendix postnikov}

    In this appendix, we present a motivic lift of Getzler's results on mixed Hodge modules on configuration spaces, see \cite{getzlerMHM},  which is used in the proof of Proposition~\ref{prop: iso C D alternating}. This is a special case of the main theorem of \cite{dupontjuteau}.

    \subsection{The Arnol'd modules $\boldsymbol{A_N}$}
    
        Let $N$ be a finite set of cardinality $n\geq 1$, and let $E_N$ denote the graded-commutative $\QQ$-algebra generated by degree $1$ elements $e_{i,j}$, for distinct indices ${i,j\in N}$, subject to the relations $e_{i,j}=e_{j,i}$ and 
        $$e_{i,j}e_{i,k}-e_{i,j}e_{j,k}+e_{i,k}e_{j,k}= 0$$
        for pairwise distinct indices $i,j,k\in N$. This algebra was introduced by Arnol'd, who proved in~\cite{arnold} that it is isomorphic to the rational cohomology algebra of the configuration space of distinct points indexed by~$N$ in $\CC$. We will be interested in its top-degree component.
        
        \begin{definition}
        The \emph{Arnol'd module} $A_N$ is the component of degree $n-1$ of $E_N$.
        \end{definition}

        The $\QQ$-vector space $A_N$ is functorial in $N$ in the sense that a bijection $N\simeq N'$ induces an isomorphism $A_N\simeq A_{N'}$; in particular, $A_N$ is a representation of the symmetric group $\mathfrak{S}_N$.
        
        A classical interpretation of the Arnol'd modules, originally due to Cohen (see \cite[Theorem 12.3]{cohenthesis} and \cite[Theorem 6.1\,(2)]{cohenconfigurationlie}), is that there is an isomorphism of $\mathfrak{S}_N$-representations
        \begin{equation}\label{eq: arnold and lie}
        A_N^\vee \simeq \sgn_N\otimes\Lie(N),
        \end{equation}
        where $\sgn_N$ denotes the sign character and $\Lie(N)$ is the space of Lie words on variables indexed by $N$ that are linear in each variable.

    \subsection{The Arnol'd modules as poset homology groups}
        
        We will use the following interpretation of the Arnol'd module $A_N$ in terms of poset topology; see \cite{orliksolomon, barcelo, hanlonwachs}. Recall that a \emph{partition} of $N$ is a set $\pi$ of disjoint subsets of $N$, called the \emph{blocks} of the partition, whose union is $N$. We denote by $|\pi|$ the cardinality of a partition $\pi$, \textit{i.e.}, the number of blocks. The set $\Pi_N$ of all partitions of $N$ is a poset, where $\pi\leq \pi'$ if and only if $\pi'$ is obtained from $\pi$ by merging blocks. The smallest element $\hat{0}$ of $\Pi_N$ is the partition whose blocks are all singletons, and the largest element $\hat{1}$ is the partition with only one block. Note that the symmetric group $\mathfrak{S}_N$ acts on $\Pi_N$. 
        
        By a special case of \cite{orliksolomon}, the reduced homology of the poset $\Pi_N\setminus\{\hat{0},\hat{1}\}$ is concentrated in top degree~$n-3$, and the corresponding group is isomorphic to $A_N$ as a representation of $\mathfrak{S}_N$:
        \begin{equation}\label{eq: appendix Arnold module as poset homology first}
        \widetilde{\H}_{n-3}\left(\Pi_N\setminus\left\{\hat{0},\hat{1}\right\}\right)\simeq A_N.
        \end{equation}
        For an elementary proof, see \cite[Example 3.14]{petersen}. In \cite[Section~1]{dupontjuteau}, a more natural grading convention is used, where the non-trivial homology group is in degree $n-1$. With the notation of \emph{loc.~cit.}, there is a complex $C^\bullet_{\Pi_N}(\hat{1})$ that computes the (shifted) reduced cohomology of $\Pi_N\setminus\{\hat{0},\hat{1}\}$ with coefficients in $\QQ$, and~\eqref{eq: appendix Arnold module as poset homology first} translates to an $\mathfrak{S}_N$-equivariant quasi-isomorphism 
        \begin{equation}\label{eq: appendix Arnold module as poset homology}
        C^\bullet_{\Pi_N}\left(\hat{1}\right) \simeq A_N^\vee[-n+1].
        \end{equation}
        
        We will need a generalization of this fact to lower intervals in the partition poset. For a partition $\pi\in\Pi_N$, let us consider the complex $C^\bullet(\pi)$ from \cite[Section~1]{dupontjuteau} which computes the (shifted) reduced cohomology of the open interval $(\hat{0},\pi)$ in $\Pi_N$. The closed interval $[\hat{0},\pi]$ is isomorphic to the product of the partition posets $\Pi_B$, for $B\in\pi$, and we therefore get a quasi-isomorphism
        $$C^\bullet(\pi) \simeq \bigotimes_{B\in\pi} C^\bullet_{\Pi_B}\left(\hat{1}\right).$$ 
        Using \eqref{eq: appendix Arnold module as poset homology}, we get an $\mathfrak{S}_N$-equivariant quasi-isomorphism
        \begin{equation}\label{eq: appendix Arnold module as poset homology bis}
        C^\bullet(\pi)\simeq \bigotimes_{B\in\pi} A_B^\vee[-(|B|-1)] \simeq A(\pi)^\vee[-n+|\pi|],
        \end{equation}
        where we have set
        $$A(\pi)=\bigotimes_{B\in \pi}A_B.$$
        
    \subsection{The alternating part of Arnol'd modules}
        
        We will need the following crucial fact about the Arnol'd modules: they do not contain the sign representation.

        \needspace{2\baselineskip}
        \begin{theorem}\label{thm: appendix Arnold sign} 
        If $n=|N|\geq 2$, then $(A_N^\vee)^{\sgn}=0$.
        \end{theorem}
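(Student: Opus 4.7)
The plan is to reduce this to a classical computation for the free Lie operad. By Cohen's isomorphism \eqref{eq: arnold and lie}, $A_N^\vee \simeq \sgn_N \otimes \Lie(N)$ as $\mathfrak{S}_N$-representations. Since tensoring with the sign character is an autoequivalence of $\mathfrak{S}_N$-representations that swaps the trivial and sign isotypic components, we get
$$(A_N^\vee)^{\sgn} \simeq (\sgn_N \otimes \Lie(N))^{\sgn} \simeq \Lie(N)^{\mathfrak{S}_N},$$
so it suffices to prove that $\Lie(N)^{\mathfrak{S}_N} = 0$ when $|N| \geq 2$.

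For this, I would invoke Klyachko's theorem, which identifies $\Lie(N)$ with the induced representation
$$\Lie(N) \simeq \Ind_{C_N}^{\mathfrak{S}_N}(\zeta),$$
where $C_N \subset \mathfrak{S}_N$ is the cyclic subgroup generated by an $n$-cycle and $\zeta \colon C_N \to \CC^\times$ is a faithful character (sending a chosen generator to a primitive $n$-th root of unity). Frobenius reciprocity then gives
$$\Lie(N)^{\mathfrak{S}_N} = \Hom_{\mathfrak{S}_N}(\unit, \Ind_{C_N}^{\mathfrak{S}_N}(\zeta)) = \Hom_{C_N}(\unit, \zeta) = 0,$$
the last equality holding because $\zeta$ is non-trivial as soon as $n \geq 2$.

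The only substantial input is Klyachko's theorem, which is the main (external) obstacle but is a well-established classical result. Should one wish to avoid it, the case $n = 2$ is immediate since $\Lie(\{1,2\})$ is one-dimensional with the transposition acting by $-1$, and the general case can alternatively be handled by computing the multiplicity $\langle \chi_{\Lie(n)}, \unit \rangle_{\mathfrak{S}_n}$ directly from the M\"obius-type formula for the character of $\Lie(n)$. The representation-theoretic route via Klyachko is however the most transparent.
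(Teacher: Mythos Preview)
Your proof is correct and follows essentially the same route as the paper: the paper also invokes the isomorphism $A_N^\vee\simeq \sgn_N\otimes \Ind_{C_n}^{\mathfrak{S}_N}(\xi_n)$ (i.e.\ Cohen's isomorphism combined with Klyachko's theorem) and concludes via Frobenius reciprocity that $(A_N^\vee)^{\sgn}$ is the space of $C_n$-invariants of the non-trivial character $\xi_n$, hence zero. The only cosmetic difference is that the paper explicitly extends scalars to $\CC$ before invoking the induced-representation description, whereas you leave this implicit.
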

        
        \begin{proof}
        It is enough to prove the theorem after extending scalars to $\CC$. The structure of $A_N^\vee$ as a complex representation of $\mathfrak{S}_N$ is given by the following classical result. 
        Let~$C_n$ be a cyclic subgroup of $\mathfrak{S}_N$ generated by an $n$-cycle, and let $\xi_n$ be a primitive character of~$C_n$. There is an $\mathfrak{S}_N$-equivariant isomorphism
        \begin{displaymath}
        A_N^\vee \simeq \sgn_N\otimes \Ind_{C_n}^{\mathfrak{S}_N}(\xi_n).
        \end{displaymath}
        This fact, or more precisely the isomorphism $\Lie(N)\simeq \Ind_{C_n}^{\mathfrak{S}_N}(\xi_n)$ (see \eqref{eq: arnold and lie}), was proved by Brandt \cite[Theorem III]{brandt} in the language of Schur functors and rediscovered many times, see, \textit{e.g.}, \cite[Proposition 1]{klyachko} and \cite[Section~4.4]{joyal}, or for the homology of partition posets, \cite{hanlon, stanleygroups}. It follows that $(A_N^\vee)^{\sgn}$ is isomorphic to the space of fixed points of the $\mathfrak{S}_N$-representation $\Ind_{C_n}^{\mathfrak{S}_N}(\xi_n)$, which by Frobenius reciprocity is isomorphic to the space of fixed points of the $C_n$\nobreakdash-representa\-tion $\xi_n$. It is zero since $\xi_n$ is a non-trivial character.
        \end{proof}

    \subsection{Motives of configuration spaces with coefficients}
    
        Let $X$ be a scheme (separated and of finite type over a Noetherian base scheme), and let $C_N(X)$ denote the configuration space of points of $X$ indexed by $N$, \textit{i.e.}, the complement in $X^N$ of the union of the diagonals $x_a=x_b$, for distinct $a,b\in N$. This is the open stratum of a natural stratification of $X^N$ indexed by partitions of $N$. Namely, to each $\pi\in\Pi_N$ corresponds a stratum whose Zariski closure is  the closed subscheme $X^\pi\subset X^N$ where $x_a=x_b$ if $a$ and $b$ are in the same block of $\pi$.
        
        We let $j\colon C_N(X)\hookrightarrow X^N$ denote the natural open immersion and let $\mathcal{F}\in \DM(X^N)$ be a motive.  Recall that a Postnikov system in a triangulated category is simply a sequence of distinguished triangles where each triangle has a vertex in common with the next one. The following theorem is a motivic lift of Getzler's results on mixed Hodge modules on configuration spaces; see \cite{getzlerMHM}.
    
        \begin{theorem}\label{thm: postnikov system}
        There is a Postnikov system in the triangulated category $\DM(X^N)$: 
        $$\diagram{
	    0=F^n\ar[rr]&&\cdots\ar[ld]& \cdots\ar[rr] && F^1\ar[rr]\ar[ld] && F^0=j_!j^!\mathcal{F},\ar[ld] \\
	    &G^{n-1}\ar[ul]^{+1}& && G^1\ar[ul]^{+1} && G^0\ar[ul]^{+1} &
	    }$$
	    whose graded objects are given by
	    $$G^k = \bigoplus_{\substack{\pi\in \Pi_N\\ |\pi|=n-k}}  \left(i_{X^\pi}^{X^N}\right)_*\left(i_{X^\pi}^{X^N}\right)^*\mathcal{F}[-k] \;\otimes\; A(\pi)^\vee.$$
	    It is equivariant with respect to the action of\, $\mathfrak{S}_N$.
        \end{theorem}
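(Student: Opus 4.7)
The plan is to recognize this statement as the specialization of the main theorem of \cite{dupontjuteau} to the stratification of $Y=X^N$ by the partition poset $\Pi_N$. That general theorem produces, for any open stratum $j\colon U\hookrightarrow Y$ of a poset-stratified variety and any object $\mathcal{F}$ on $Y$, a functorial and equivariant Postnikov tower on $j_!j^!\mathcal{F}$ whose graded pieces combine the pushforwards of $\mathcal{F}$ from the closed strata with the reduced cohomology complexes $C^\bullet(\pi)$ of the lower intervals $[\hat{0},\pi]$ of the stratifying poset. In our setting the open stratum is the configuration space $C_N(X)$, the closed strata are the subschemes $X^\pi$ for $\pi\in\Pi_N$, and the symmetric group $\mathfrak{S}_N$ acts on everything in sight by permuting coordinates.

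Once that theorem is applied, the only remaining task is to identify the resulting graded pieces, of the shape
\[
\bigoplus_{\substack{\pi\in\Pi_N\\ |\pi|=n-k}} \left(i_{X^\pi}^{X^N}\right)_*\left(i_{X^\pi}^{X^N}\right)^*\mathcal{F} \otimes C^\bullet(\pi),
\]
with those appearing in the statement. This is accomplished by the quasi-isomorphism \eqref{eq: appendix Arnold module as poset homology bis}, namely $C^\bullet(\pi)\simeq A(\pi)^\vee[-n+|\pi|]$, which produces exactly the shift $[-k]$ claimed in the statement thanks to the equality $|\pi|=n-k$. The $\mathfrak{S}_N$-equivariance is built into the abstract construction, since all inputs (the stratification, the motive $\mathcal{F}$, and the poset $\Pi_N$) are equivariant.

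The mechanism underlying \cite{dupontjuteau} is an iterated application of the localization triangles $j_!j^*\to \id\to i_*i^*\to[+1]$ along the strata, organized by the combinatorics of $\Pi_N$. One could attempt a direct proof as follows: filter $X^N$ by the open subschemes $U_k = X^N\setminus \bigcup_{|\pi|\leq n-k}X^\pi$, so that $U_1=C_N(X)$ and $U_n=X^N$, and apply the above localization triangle inductively to the motives $(j_{U_k}^{X^N})_!(j_{U_k}^{X^N})^!\mathcal{F}$, stratifying the resulting closed complements further by deeper partitions. The main obstacle—which the abstract framework of \cite{dupontjuteau} elegantly circumvents—is to assemble all these triangles into a single coherent Postnikov system that is $\mathfrak{S}_N$-equivariant and that produces the Arnold modules $A(\pi)^\vee$ cleanly, rather than an unwieldy combinatorial sum indexed by chains in $\Pi_N$.
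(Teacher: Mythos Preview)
Your proposal is correct and follows exactly the paper's approach: invoke the main theorem of \cite{dupontjuteau} and then identify the graded pieces via the quasi-isomorphism \eqref{eq: appendix Arnold module as poset homology bis}. Your additional discussion of the underlying localization mechanism and the obstacles to a direct proof is helpful context but not part of the paper's (one-sentence) argument.
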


        \begin{proof}
        This is a special case of the main theorem of \cite{dupontjuteau}, taking the quasi-isomorphism \eqref{eq: appendix Arnold module as poset homology bis} into account. 
        \end{proof}

\section{Computations of extension groups, and comparison with previous work}\label{sec: appendix ext}

The polylogarithm motive is an extension of $\Sym(\mathcal{K})(-1)$ by $\QQ_S(0)$, but in the literature it is sometimes described as an extension of $\Sym(\mathcal{K})$ by $\QQ_S(0)$. In this appendix, we describe how those two types of objects are related. For this, first note that the short exact sequence
$$0\To \QQ_S(0)\To \mathcal{K}\To \QQ_S(-1)\To 0$$
induces a short exact sequence of ind-objects in $\MT(S)$:
\begin{equation}\label{eq: appendix short exact sequence Sym K}
0\To \QQ_S(0) \To \Sym(\mathcal{K}) \To \Sym(\mathcal{K})(-1)\To 0.
\end{equation}
Pullback by the morphism $\Sym(\mathcal{K}) \To \Sym(\mathcal{K})(-1)$ yields a linear map 
$$\Ext^1_{\Ind(\MT(S))}(\Sym(\mathcal{K})(-1),\QQ_S(0)) \To \Ext^1_{\Ind(\MT(S))}(\Sym(\mathcal{K}),\QQ_S(0)),$$
which is computed by the following proposition, proved in Section~\ref{sec: appendix proof ext group} below.

\begin{proposition}\label{prop: appendix ext groups}
The following diagram commutes, where the rows are short exact sequences and the vertical arrows are canonical isomorphisms: 
$$
\xymatrixcolsep{.6cm}\diagram{
0 \ar[r]& \QQ \ar[r]\ar@{=}[d]& \Ext^1_{\Ind(\MT(S))}(\Sym(\mathcal{K})(-1),\QQ_S(0)) \ar[r]\ar[d]^-{\sim}& \Ext^1_{\Ind(\MT(S))}(\Sym(\mathcal{K}),\QQ_S(0)) \ar[r]\ar[d]^-{\sim}& 0 \\
0 \ar[r]& \QQ \ar[r]_{k\mapsto (k,0)}& \QQ\oplus\QQ \ar[r]_{(k_0,k_1)\mapsto k_1}& \QQ \ar[r]& 0\rlap{.}
}$$
Under the canonical isomorphism 
$$\Ext^1_{\Ind(\MT(S))}(\Sym(\mathcal{K})(-1),\QQ_S(0)) \simeq \QQ \oplus \QQ,$$
the class of \eqref{eq: appendix short exact sequence Sym K} corresponds to $(1,0)$, and the class of $\mathcal{L}$ corresponds to $(0,1)$.
\end{proposition}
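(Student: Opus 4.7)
The plan is to apply $\Hom_{\Ind(\MT(S))}(-, \QQ_S(0))$ to the short exact sequence \eqref{eq: appendix short exact sequence Sym K} and extract the top row of the diagram from the resulting long exact sequence of $\Ext$-groups. The key inputs are the vanishings $\Hom(\Sym(\mathcal{K})(-1), \QQ_S(0)) = 0$ (immediate, since $\Sym(\mathcal{K})(-1)$ has only strictly positive weights while $\QQ_S(0)$ is pure of weight zero), $\Hom(\Sym(\mathcal{K}), \QQ_S(0)) = 0$, and $\Ext^1(\QQ_S(0), \QQ_S(0)) = 0$ (the Beilinson--Soul\'e vanishing for $S$, already invoked in the paper). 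The middle vanishing requires an inductive argument: the base case $\Hom(\mathcal{K}, \QQ_S(0)) = 0$ follows from the long exact sequence for $0 \to \QQ_S(0) \to \mathcal{K} \to \QQ_S(-1) \to 0$ combined with the non-triviality of the Kummer class $[z] \in \Ext^1(\QQ_S(-1), \QQ_S(0))$; the short exact sequences $0 \to \Sym^{n-1}(\mathcal{K}) \to \Sym^n(\mathcal{K}) \to \QQ_S(-n) \to 0$ then propagate the vanishing to every $n \geq 1$, whence the inverse limit also vanishes.

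Next I would identify the two $\Ext$-groups with $\QQ \oplus \QQ$ and $\QQ$ and match the specific classes to the claimed coordinates. Restriction to the bottom-weight subobject $\QQ_S(-1) \subset \Sym(\mathcal{K})(-1)$ gives a natural map $\Ext^1(\Sym(\mathcal{K})(-1), \QQ_S(0)) \to \Ext^1(\QQ_S(-1), \QQ_S(0)) = \H^1(S, \QQ(1)) = \QQ\langle z \rangle \oplus \QQ\langle 1-z \rangle$, generated by the Kummer classes of the natural units $z, 1-z \in \mathcal{O}(S)^\times$. The plan is to show this map is a surjection onto a two-dimensional space (establishing the canonical isomorphism of the proposition), and similarly to identify $\Ext^1(\Sym(\mathcal{K}), \QQ_S(0))$ with $\QQ\langle 1-z \rangle$. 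The connecting map sends $\id \in \Hom(\QQ_S(0), \QQ_S(0))$ to the class of \eqref{eq: appendix short exact sequence Sym K}, whose bottom-weight restriction is the Kummer extension $\mathcal{K}$ of $z$, identifying it with $(1, 0)$. The class of $\mathcal{L}$ has bottom-weight restriction $\mathcal{L}_1 = \M(X_1 \setminus A_1, B_1 \setminus A_1 \cap B_1)\,[1]$, which is the Kummer extension of $1-z$ (consistent with $\Li_1(z) = -\log(1-z)$), identifying it with $(0, 1)$.

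The main technical obstacle is showing that the restriction maps above are actually isomorphisms, equivalently that $\Ext^1(\Sym(\mathcal{K})(-1), \QQ_S(0))$ has dimension exactly $2$. The inductive analysis of $\Ext^1(\Sym^n(\mathcal{K})(-1), \QQ_S(0))$ via the short exact sequences $0 \to \Sym^{n-1}(\mathcal{K})(-1) \to \Sym^n(\mathcal{K})(-1) \to \QQ_S(-n-1) \to 0$ brings in contributions from $\H^1(S, \QQ(k)) = K_{2k-1}(\ZZ)_\QQ$ for $k \geq 2$, which is nonzero for odd $k \geq 3$ by Borel's theorem. A priori these could enlarge the inverse limit; ruling out their appearance---whether through non-vanishing of appropriate $\Ext^2$-connecting morphisms, or via an admissibility condition inherited from the realization functors---is the delicate point, and is precisely where the comparison with the computations of \cite{wildeshaus, HW, AyoubOberwolfach} becomes essential.
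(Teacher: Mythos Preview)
Your approach for the top row---applying $\Hom(-,\QQ_S(0))$ to the short exact sequence \eqref{eq: appendix short exact sequence Sym K} and using the vanishings you list---matches the paper exactly, as does your identification of the specific extension classes via restriction to the bottom-weight piece $\QQ_S(-1)\hookrightarrow \Sym(\mathcal{K})(-1)$ (the paper calls the resulting map $R$, composed further with the residue isomorphism \eqref{eq: appendix res zero one}).

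However, you correctly flag the gap in your own argument, and this gap is real: your inductive computation of $\Ext^1(\Sym^n(\mathcal{K})(-1),\QQ_S(0))$ does pick up the Borel classes in $K_{2k-1}(\ZZ)_\QQ$ for odd $k\geq 3$, and you have no mechanism to rule them out in the limit. The paper sidesteps this entirely. It never computes $\Ext^1(\Sym(\mathcal{K})(-1),\QQ_S(0))$ directly; instead it first proves the \emph{rightmost} isomorphism $\Ext^1_{\Ind(\MT(S))}(\Sym(\mathcal{K}),\QQ_S(0))\simeq\QQ$ by Ayoub's localization argument (Proposition~\ref{prop: appendix general ext}\eqref{p:age-1}\eqref{p:age-1b}): using purity and adjunction one rewrites this $\Ext$ as $\Hom_{\Ind(\DM(\ZZ))}(a_!\Sym(\mathcal{K}),\QQ(-1)[-1])$ and then uses the localization triangle for $j\colon S\hookrightarrow\GG_{m,\ZZ}\hookleftarrow\{1\}$. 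The crucial computation is $b_!\Sym(\mathcal{K}')\simeq \QQ(0)[-1]$ over $\Spec(\ZZ)$, which holds precisely because $\Sym(\mathcal{K}')$ is an \emph{ind}-object: on each $\Sym^n$ the monodromy is a single unipotent Jordan block, so $\H^2_\c$ is one-dimensional, but these classes die in the colimit. This is what makes the higher zeta contributions disappear all at once---they never enter the picture. Once the left and right vertical arrows are isomorphisms and both squares commute, the middle isomorphism follows from the five lemma.

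So the missing idea is: do not induct on $n$; compute $\Ext^1(\Sym(\mathcal{K}),\QQ_S(0))$ in one shot via the localization triangle and the compactly supported pushforward $b_!\Sym(\mathcal{K}')$, then deduce the middle column by the five lemma.
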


    In order to help the reader navigate between references, we now make a series of comments about the relation between our setting and \cite{wildeshaus, HW, AyoubOberwolfach, huberkings}.
    \begin{enumerate}
    \item Those references do not consider extensions of $\Sym(\mathcal{K})(-1)$ by $\QQ_S(0)$, but rather extensions of $\Sym(\mathcal{K})$, without the Tate twist, by $\QQ_S(0)$. One can go between one type of extension and the other by using the fact that the short exact sequence in the top row of the diagram of Proposition~\ref{prop: appendix ext groups} canonically splits (because the short exact sequence in the bottom row does and the vertical arrows are canonical).
    \item In fact, those references work in the \emph{dual} setting (consistently with Remark~\ref{rem: dual}) and consider extensions of $\QQ_S(0)$ by the pro-object $\Sym(\mathcal{K}^\vee)$.
    \item As explained in Section~\ref{sec:1}, the Kummer motive $\mathcal{K}$ is the restriction to $S$ of an object $\mathcal{K}'$ of $\MT(\GG_{m,\ZZ})$. What is denoted by $\mathcal{K}$ in most references is the dual of our $\mathcal{K}'$, and the pro-object consisting of its symmetric powers is denoted by $\mathcal{L}og$.
    \item In the work of Huber--Kings \cite[Section~6.4]{huberkings}, the extension corresponding to the polylogarithm motive is denoted by $\overline{\mathrm{pol}}$. The more ``primitive'' extension denoted by $\mathrm{pol}$ there does not seem to have an incarnation in the setting of this paper.
    \end{enumerate}

\subsection{Extensions via residues}
We start by explaining how ``residue'' morphisms control certain extension groups. We introduce the following commutative diagrams: 
$$\diagram{
 S \ar@{^(->}[r]^-{j} \ar[dr]_{a} & \GG_{m,\ZZ} \ar[d]_-{b} & \{1\}, \ar@{_(->}[l]_-{i} \ar@{=}[dl]\\
& \Spec(\ZZ) &
}
\qquad \qquad 
\diagram{
 S \ar@{^(->}[r]^-{j'} \ar[dr]_{a} & \AA^1_{\ZZ} \ar[d]_-{b'} & \{0,1\}. \ar@{_(->}[l]_-{(i_0,i_1)} \ar[dl]\\
& \Spec(\ZZ) &
}$$
The first part of the next proposition is Ayoub's argument; see \cite{AyoubOberwolfach}.

\begin{proposition}\label{prop: appendix general ext}
Let $\mathcal{M}$ be an object of\, $\Ind(\MT(S))$.
\begin{enumerate}
\item\label{p:age-1}
  \begin{enumerate}\item\label{p:age-1a}
    Assume that there exists an object $\mathcal{M}'$ of $\Ind(\MT(\GG_{m,\ZZ}))$ such that $\mathcal{M}=j^*\mathcal{M}'$. Then there is a ``residue'' morphism
$$\Res_1\colon \Ext^1_{\Ind(\MT(S))}(\mathcal{M},\QQ_S(0)) \To \Hom_{\Ind(\MT(\ZZ))}(i^*\mathcal{M}',\QQ(-1)),$$
which is functorial in $\mathcal{M}'$. 
    \item\label{p:age-1b} For $\mathcal{M}=\Sym(\mathcal{K})$, this residue induces an isomorphism
\begin{equation}\label{eq: appendix res one}
\Res_1\colon \Ext^1_{\Ind(\MT(S))}(\Sym(\mathcal{K}),\QQ_S(0)) \stackrel{\lowsim}{\To} \QQ.
\end{equation}
\end{enumerate}
\item\label{p:age-2}
  \begin{enumerate}\item\label{p:age-2a} Assume that there exists an object $\mathcal{M}'$ of $\Ind(\MT(\AA^1_\ZZ))$ such that $\mathcal{M}=(j')^*\mathcal{M}'$. Then there is a ``residue'' morphism
\begin{align*}
    (\Res_0,\Res_1)&\colon \Ext^1_{\Ind(\MT(S))}(\mathcal{M},\QQ_S(0)) \\
    &\To \Hom_{\Ind(\MT(\ZZ))}(i_0^*\mathcal{M}',\QQ(-1)) \oplus \Hom_{\Ind(\MT(\ZZ))}(i_1^*\mathcal{M}',\QQ(-1)),
\end{align*}
which is functorial in $\mathcal{M}'$. The map $\Res_1$ agrees with that of the previous point.
    \item\label{p:age-2b} For $\mathcal{M}=\QQ_S(-1)$, this residue induces an isomorphism
\begin{equation}\label{eq: appendix res zero one}
(\Res_0,\Res_1)\colon \Ext^1_{\MT(S)}(\QQ_S(-1),\QQ_S(0)) \stackrel{\lowsim}{\To} \QQ\oplus \QQ.
\end{equation}
    \end{enumerate}\end{enumerate}
\end{proposition}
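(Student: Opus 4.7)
The plan is to construct the residues from the standard localization triangles on $\GG_{m,\ZZ}$ and $\AA^1_\ZZ$ combined with absolute purity, and then to verify~\eqref{p:age-1b} and~\eqref{p:age-2b} by concrete motivic cohomology computations.

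For part~\eqref{p:age-1a}, the localization triangle in $\DM(\GG_{m,\ZZ})$ attached to the open and closed immersions $j$ and $i$ reads
$$i_*i^!\QQ_{\GG_{m,\ZZ}} \To \QQ_{\GG_{m,\ZZ}} \To Rj_*\QQ_S \stackrel{+1}{\To}.$$
Absolute purity in codimension one identifies $i^!\QQ_{\GG_{m,\ZZ}}\simeq \QQ(-1)[-2]$, so the connecting morphism becomes a canonical map $\partial\colon Rj_*\QQ_S[1]\To i_*\QQ(-1)$ in $\DM(\GG_{m,\ZZ})$. For $\mathcal{M}'\in\Ind(\MT(\GG_{m,\ZZ}))$ with $j^*\mathcal{M}'=\mathcal{M}$, the $(j^*,Rj_*)$-adjunction identifies the source of $\Res_1$ with $\Hom_{\Ind(\DM(\GG_{m,\ZZ}))}(\mathcal{M}',Rj_*\QQ_S[1])$; postcomposing with~$\partial$ and applying the $(i^*,i_*)$-adjunction yield the residue $\Res_1$, which is manifestly functorial in $\mathcal{M}'$. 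For part~\eqref{p:age-2a}, the same procedure applies verbatim with the codimension-one closed immersion $(i_0,i_1)\colon\{0,1\}\hookrightarrow\AA^1_\ZZ$, whose purity splits as $\QQ(-1)[-2]\oplus\QQ(-1)[-2]$ and delivers the pair $(\Res_0,\Res_1)$. Compatibility with~\eqref{p:age-1a} follows because pulling back the $\AA^1_\ZZ$-localization triangle along the open immersion $\GG_{m,\ZZ}\hookrightarrow\AA^1_\ZZ$ recovers the $\GG_{m,\ZZ}$-triangle of the previous paragraph, killing the $\{0\}$-summand.

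For~\eqref{p:age-1b}, apply the construction to $\mathcal{M}'=\Sym(\mathcal{K}')$, where $\mathcal{K}'$ is the canonical Kummer motive over $\GG_{m,\ZZ}$. The fiber $i^*\mathcal{K}'$ at the unit section is \emph{canonically split} (the Kummer torsor is trivial at $t=1$), so $i^*\Sym^n(\mathcal{K}')\simeq\bigoplus_{k=0}^n\QQ(-k)$ and the transition maps $i^*\iota_n$ from~\eqref{eq: transition morphism Sym K} become the tautological inclusions. Consequently,
$$\Hom_{\Ind(\MT(\ZZ))}(i^*\Sym(\mathcal{K}'),\QQ(-1)) \simeq \varprojlim\nolimits_n\QQ \simeq \QQ.$$
For the source, the long exact sequence attached to the localization triangle sandwiches $\Ext^1_{\Ind(\MT(S))}(\Sym(\mathcal{K}),\QQ_S(0))$ between flanking Ext groups of $\Sym(\mathcal{K}')$ against $\QQ_{\GG_{m,\ZZ}}$ in degrees $1$ and~$2$. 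An induction on $n$ using the short exact sequence of Proposition~\ref{prop: kummer}, combined with homotopy invariance (which reduces Ext groups over $\GG_{m,\ZZ}$ and $\AA^1_\ZZ$ to those over $\Spec(\ZZ)$, up to a single contribution from the motivic cohomology class of the coordinate~$t$) and Borel's theorem (yielding $K_1(\ZZ)_\QQ=0$ and governing the higher $K_{2k-1}(\ZZ)_\QQ$), collapses these flanking groups and exhibits $\Res_1$ as the claimed isomorphism. This is the main technical point, and is essentially the computation carried out in~\cite{AyoubOberwolfach}.

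For~\eqref{p:age-2b}, the pullbacks $i_0^*\QQ_{\AA^1_\ZZ}(-1)=i_1^*\QQ_{\AA^1_\ZZ}(-1)=\QQ(-1)$ give the target $\QQ\oplus\QQ$. The source identifies with $\H^1_\mot(S,\QQ(1))=\Oc^*(S)\otimes_\ZZ\QQ$, which for $S=\Spec(\ZZ[z^{\pm1},(1-z)^{-1}])$ is the two-dimensional $\QQ$-vector space spanned by the classes of the units $z$ and~$1-z$. Under the Gysin interpretation, $(\Res_0,\Res_1)$ becomes the pair of order-of-vanishing maps at $0$ and~$1$ (up to sign), sending $z\mapsto(1,0)$ and $1-z\mapsto(0,-1)$, which is visibly an isomorphism onto $\QQ\oplus\QQ$. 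This part is a direct reformulation of the identification of Tate extensions with motivic cohomology; as indicated, the main obstacle in the proposition is the Ext computation underlying~\eqref{p:age-1b}.
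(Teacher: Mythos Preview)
Your construction of the residue maps via the $(j^*,Rj_*)$ adjunction and the localization triangle on $\GG_{m,\ZZ}$ (resp.\ $\AA^1_\ZZ$) is correct and is essentially dual to the paper's approach: the paper instead applies purity for the structure map $a\colon S\to\Spec(\ZZ)$ to rewrite $\QQ_S(0)\simeq a^!\QQ(-1)[-2]$, passes through the $(a_!,a^!)$ adjunction, and uses the localization triangle $i^*\mathcal{M}'[-1]\to a_!\mathcal{M}\to b_!\mathcal{M}'\stackrel{+1}{\to}$ over $\Spec(\ZZ)$. The two constructions match under the identification $\Hom_{\GG_{m,\ZZ}}(\mathcal{M}',\QQ_{\GG_{m,\ZZ}}[k])\simeq\Hom_{\ZZ}(b_!\mathcal{M}',\QQ(-1)[k-2])$, so your flanking terms are the same as the paper's.

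The substantive difference is in part~\eqref{p:age-1b}. The paper disposes of the flanking terms in one stroke via the identity $b_!\Sym(\mathcal{K}')\simeq\QQ(0)[-1]$ in $\Ind(\DM(\ZZ))$, after which the required vanishings are immediate. (The paper justifies this identity in a remark by a monodromy computation in Betti realization: for finite $n$ one has $\H^2_{\c}(\CC^*,\Sym^n(\mathcal{K}'))\simeq\QQ(-n)$, and this vanishes in the inductive limit.) Your sketch instead invokes an induction on $n$ together with homotopy invariance and Borel; this is a valid strategy, but note that the flanking groups $\Ext^i_{\GG_{m,\ZZ}}(\Sym^n(\mathcal{K}'),\QQ(0))$ at \emph{finite} level are not zero (already $\Ext^1_{\GG_{m,\ZZ}}(\mathcal{K}',\QQ(0))\simeq\QQ$), so the ``collapse'' you assert really happens only after passing to the inverse limit, and you would need to track the transition maps to conclude. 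Your reference to \cite{AyoubOberwolfach} is apt, but as written your argument here is more of an outline than a proof.

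For~\eqref{p:age-2b}, your direct identification $\Ext^1_{\MT(S)}(\QQ_S(-1),\QQ_S(0))\simeq\Oc^*(S)\otimes_\ZZ\QQ$ is exactly what the paper records in a remark as an alternative, easier proof; the paper's own argument runs parallel to~\eqref{p:age-1b} via $b'_!\QQ_{\AA^1_\ZZ}(-1)\simeq\QQ(-2)[-2]$.
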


\begin{proof}
\eqref{p:age-1}\eqref{p:age-1a}~ Since $S$ is smooth of relative dimension $1$ over $\Spec(\ZZ)$, from the purity isomorphism \hbox{$a^!\QQ(0)\simeq \QQ_S(1)[2]$}, we get $\QQ_S(0)\simeq a^!\QQ(-1)[-2]$. By using the adjunction between $a_!$ and $a^!$, we therefore get an isomorphism 
$$\Hom_{\Ind(\DM(S))}(\mathcal{M},\QQ_S(0)[1]) \simeq \Hom_{\Ind(\DM(\ZZ))}(a_!\mathcal{M},\QQ(-1)[-1]).$$
Now writing $a_!\mathcal{M} \simeq b_!j_!j^!\mathcal{M}'$ (since $j$ is an open immersion), the distinguished localization triangle \hbox{$i_*i^*[-1]\to j_!j^!\to 1 \stackrel{+1}{\to}$} gives rise to a distinguished triangle
$$i^*\mathcal{M}'[-1]\To a_!\mathcal{M}\To b_!\mathcal{M}' \stackrel{+1}{\To}.$$
By applying $\Hom_{\Ind(\MT(\ZZ))}(-,\QQ(-1)[-1])$, we get the desired residue map
$$\Hom_{\Ind(\DM(\ZZ))}(a_!\mathcal{M},\QQ(-1)[-1]) \To \Hom_{\Ind(\DM(\ZZ))}(i^*\mathcal{M}',\QQ(-1)),$$
whose kernel and cokernel are, respectively, governed by the groups
$$\Hom_{\Ind(\DM(\ZZ))}(b_!\mathcal{M}',\QQ(-1)[-1]) \quad \mbox{ and } \quad \Hom_{\Ind(\DM(\ZZ))}(b_!\mathcal{M}',\QQ(-1)).$$

\eqref{p:age-1}\eqref{p:age-1b}~ As explained in Section~\ref{sec:1}, the Kummer motive $\mathcal{K}$ is the restriction to $S$ of an object $\mathcal{K}'$ of $\MT(\GG_{m,\ZZ})$ which satisfies $i^*\mathcal{K}'\simeq \QQ(0)\oplus \QQ(-1)$. We therefore have
$$i^*\Sym(\mathcal{K}') \simeq \bigoplus_{n\geq 0}\QQ(-n),$$
and hence the target of the residue morphism $\Res_1$ is $\QQ$. Furthermore, $\Res_1$ is an isomorphism because 
$$b_!\Sym(\mathcal{K}') \simeq \QQ(0)[-1]$$
and because both $\Hom_{\DM(\ZZ)}(\QQ(0),\QQ(-1))$ and $\Hom_{\DM(\ZZ)}(\QQ(0),\QQ(-1)[-1])$ vanish.

\eqref{p:age-2}\eqref{p:age-2a}~  This follows from the same kind of computation as in the previous point. The functoriality and the compatibility with the map $\Res_1$ from the previous point are obvious. This time the kernel and cokernel of the residue morphism $(\Res_0,\Res_1)$ are, respectively, governed by the groups
$$\Hom_{\Ind(\DM(\ZZ))}(b'_!\mathcal{M}',\QQ(-1)[-1]) \quad \mbox{ and } \quad \Hom_{\Ind(\DM(\ZZ))}(b'_!\mathcal{M}',\QQ(-1)).$$

\eqref{p:age-2}\eqref{p:age-2b}~ We set $\mathcal{M}'=\QQ_{\AA^1_\ZZ}(-1)$. The claim follows from the equality $b'_!\QQ_{\mathbb{A}^1_\ZZ}(-1) \simeq \QQ(-2)[-2]$ and the vanishing of the extension groups $\Hom_{\DM(\ZZ)}(\QQ(-1)[-1],\QQ(0))\simeq \Ext^1_{\MT(\ZZ)}(\QQ(-1),\QQ(0))$ and $\Hom_{\DM(\ZZ)}(\QQ(-1)[-2],\QQ(0))\simeq \Ext^2_{\MT(\ZZ)}(\QQ(-1),\QQ(0))$.
\end{proof}

\begin{remark}
An important point in the proof of \eqref{p:age-1}\eqref{p:age-1b} of Proposition~\ref{prop: appendix general ext} is the isomorphism \hbox{$b_!\Sym(\mathcal{K}') \simeq \QQ(0)[-1]$}. In Betti realization, this is a computation of the compactly supported cohomology of $\Sym(\mathcal{K}')$ on $\CC^*$, which can be understood as follows. Recall that for a local system $\mathcal{V}$ on $\mathbb{C}^*$, if~$T\colon \mathcal{V}_1\to \mathcal{V}_1$ denotes the monodromy automorphism, we have isomorphisms
$$\H^1_{\c}(\CC^*,\mathcal{V}) \simeq \operatorname{ker}(T^{-1}-\id) \quad \text{and} \quad \H^2_{\c}(\CC^*,\mathcal{V})\simeq \coker(T^{-1}-\id).$$
For $\mathcal{V}=\Sym^n(\mathcal{K}')$, we have a decomposition $\mathcal{V}_1=\QQ e_0\oplus \cdots \oplus \QQ e_n$, and in that basis
$$T = \exp \setlength{\arraycolsep}{4pt}\def\arraystretch{1}
			 \left(\begin{matrix}
			0 & 1 & & &     &&\\
			 & 0   & 1 & &  & 0 &  \\
			 &      & 0    & & \ddots &  & \\
			 &      &     & & \ddots &  & \\
			 &0&  & & &   0 & 1   \\
			 && & && & 0  \\
			\end{matrix}\right).$$ 
   Therefore, we have
$$\H^1_{\c}(\CC^*,\Sym^n(\mathcal{K}')) \simeq \QQ e_0 \quad \text{and} \quad \H^2_{\c}(\CC^*,\Sym^n(\mathcal{K}'))\simeq \QQ e_n,$$
and passing to the limit,
$$\H^1_{\c}(\CC^*,\Sym(\mathcal{K}')) \simeq \QQ e_0 \quad \text{and} \quad \H^2_{\c}(\CC^*,\Sym(\mathcal{K}'))=0.$$
This is consistent with the isomorphism $b_!\Sym(\mathcal{K}') \simeq \QQ(0)[-1]$.
\end{remark}

\begin{remark}
The isomorphism \eqref{eq: appendix res zero one} can be proved more easily using the relation to $K$-theory:
$$\Ext^1_{\MT(S)}(\QQ_S(-1),\QQ_S(0)) \simeq K_1(S)_\QQ \simeq \left(\ZZ[z,z^{-1},(1-z)^{-1}]\right)^\times\otimes_\ZZ \QQ \simeq \QQ\oplus \QQ.$$
Note that \eqref{eq: appendix res zero one} sends the class of the Kummer extension $\mathcal{K}$ to $(1,0)$. The other basis element can be obtained by pulling back that class via the automorphism $z\mapsto 1-z$.
\end{remark}

\subsection{Proof of Proposition~\ref{prop: appendix ext groups}}\label{sec: appendix proof ext group}

We build a commutative diagram 
$$
\xymatrixcolsep{.6cm}\diagram{
0 \ar[r]& \QQ \ar[r]^-{i} \ar@{=}[d]& \Ext^1_{\Ind(\MT(S))}(\Sym(\mathcal{K})(-1),\QQ_S(0)) \ar[r]\ar[d]^-{R}& \Ext^1_{\Ind(\MT(S))}(\Sym(\mathcal{K}),\QQ_S(0)) \ar[r]\ar[d]^-{\Res_1}& 0 \\
0 \ar[r]& \QQ \ar[r]_{k\mapsto (k,0)}& \QQ\oplus\QQ \ar[r]_{(k_0,k_1)\mapsto k_1}& \QQ \ar[r]& 0
}$$ as follows. The first row results from applying the functor $\Hom(-, \QQ_S(0))$ to the short exact sequence \eqref{eq: appendix short exact sequence Sym K} and using $\Hom(\QQ_S(0),\QQ_S(0))=\QQ$ and the vanishing of $\Hom(\QQ_S(0),\QQ_S(0)[1])$ and $\Hom(\Sym(\mathcal{K}),\QQ_S(0))$. Note that the morphism $i$ sends $1\in\QQ$ to the class of the extension \eqref{eq: appendix short exact sequence Sym K}.

The inclusion $\QQ_S(0)\hookrightarrow\Sym(\mathcal{K})$ induces a morphism
$$\Ext^1_{\Ind(\MT(S)}(\Sym(\mathcal{K})(-1),\QQ_S(0)) \To \Ext^1_{\MT(S)}(\QQ_S(-1),\QQ_S(0)).$$ Composing with \eqref{eq: appendix res zero one} gives rise to the middle vertical arrow $R$. Note that $R$ sends the class of~$\Sym(\mathcal{K})$ to~$(1,0)$ and the class of $\mathcal{L}$ to~$(0,1)$ because $\mathcal{L}_1$ is the ``Kummer motive around $1$,'' obtained by pulling back~$\mathcal{K}$ by $z\mapsto 1-z$. This implies that the leftmost square commutes.

Finally, the rightmost square commutes because of the compatibility of the two maps $\Res_1$ (see Proposition~\ref{prop: appendix ext groups}\eqref{p:age-2}\eqref{p:age-2a}). The claim follows.


\end{document}